\def\thesection{\arabic{section}}
\def\theequation{\thesection.\arabic{equation}}
\newcommand{\e}{\epsilon}
\newcommand{\Om} {\Omega}
\newcommand{\De} {\Delta}
\newcommand{\la} {\lambda}
\newcommand{\noi} {\noindent}
\newcommand{\mb} {\mathbb}
\newcommand{\mc} {\mathcal}
\markboth{\small } {\small Doubly nonlocal system }
\def\theequation{\@arabic{\c@section}.\@arabic{\c@equation}}
\newcommand{\QED}{\rule{2mm}{2mm}}
\newtheorem{Theorem}{Theorem}[section]
\newtheorem{Lemma}[Theorem]{Lemma}
\newtheorem{Proposition}[Theorem]{Proposition}
\newtheorem{Corollary}[Theorem]{Corollary}
\newtheorem{Remark}[Theorem]{Remark}
\newtheorem{Definition}[Theorem]{Definition}
\begin{document}

{\vspace{0.01in}}

\title
{ \textsc{On doubly nonlocal $p$-fractional coupled elliptic system} }

\author{ T. Mukherjee\footnote{Department of Mathematics, Indian Institute of Technology Delhi,
Hauz Khaz, New Delhi-110016, India.
 e-mail: tulimukh@gmail.com}~ and ~K. Sreenadh\footnote{Department of Mathematics, Indian Institute of Technology Delhi,
Hauz Khaz, New Delhi-110016, India.
 e-mail: sreenadh@gmail.com} }

\date{}

\maketitle

\begin{abstract}

\noi We study the following nonlinear system with perturbations involving p-fractional Laplacian
\begin{equation*}
(P)\left\{
\begin{split}
(-\De)^s_p u+ a_1(x)u|u|^{p-2} &= \alpha(|x|^{-\mu}*|u|^q)|u|^{q-2}u+ \beta (|x|^{-\mu}*|v|^q)|u|^{q-2}u+ f_1(x)\; \text{in}\; \mb R^n,\\
(-\De)^s_p v+ a_2(x)v|v|^{p-2} &= \gamma(|x|^{-\mu}*|v|^q)|v|^{q-2}v+ \beta (|x|^{-\mu}*|u|^q)|v|^{q-2}v+ f_2(x)\; \text{in}\; \mb R^n,
\end{split}
\right.
\end{equation*}
where $n>sp$, $0<s<1$, $p\geq2$, $\mu \in (0,n)$, $\frac{p}{2}\left( 2-\frac{\mu}{n}\right) < q <\frac{p^*_s}{2}\left( 2-\frac{\mu}{n}\right)$, $\alpha,\beta,\gamma >0$, $0< a_i \in C^1(\mb R^n, \mb R)$, $i=1,2$ and $f_1,f_2: \mb R^n \to \mb R$ are perturbations. We show existence of atleast two nontrivial solutions for $(P)$ using Nehari manifold and minimax methods.

\medskip

\noi \textbf{Key words:} p-fractional Laplacian, choquard equation, Nehari manifold.

\medskip

\noi \textit{2010 Mathematics Subject Classification:} 35R11, 35R09, 35A15.

\end{abstract}

\section{Introduction and Main results}
In this article, we consider the following nonlinear system with perturbations involving p-fractional Laplacian
\begin{equation*}
(P)\left\{
\begin{split}
(-\De)^s_p u+ a_1(x)u|u|^{p-2} &= \alpha(|x|^{-\mu}*|u|^q)|u|^{q-2}u+ \beta (|x|^{-\mu}*|v|^q)|u|^{q-2}u+ f_1(x)\; \text{in}\; \mb R^n,\\
(-\De)^s_p v+ a_2(x)v|v|^{p-2} &= \gamma(|x|^{-\mu}*|v|^q)|v|^{q-2}v+ \beta (|x|^{-\mu}*|u|^q)|v|^{q-2}v+ f_2(x)\; \text{in}\; \mb R^n,
\end{split}
\right.
\end{equation*}
where $p\geq 2, s\in (0,1), n>sp$, $\mu \in (0,n)$, $\frac{p}{2}\left( 2-\frac{\mu}{n}\right) < q <\frac{p^*_s}{2}\left( 2-\frac{\mu}{n}\right)$, $\alpha,\beta,\gamma >0$, $0< a_i \in C^1(\mb R^n, \mb R)$, $i=1,2$ and $f_1,f_2: \mb R^n \to \mb R$ are perturbations. Here
 $p^*_s = \frac{np}{n-sp}$
  is the critical exponent associated with the embedding of the fractional Sobolev space $W^{s,p}(\mb R^n).$
 The $p$-fractional Laplace operator is defined on smooth functions as
\[(-\De)^s_pu(x)= 2 \lim_{\e \searrow 0} \int_{|x|>\e} \frac{|u(x)-u(y)|^{p-2}(u(x)-u(y))}{|x-y|^{n+sp}}~dy\]
which is nonlinear and nonlocal in nature. This definition matches to linear fractional Laplacian operator $(-\De)^s$, upto a normalizing constant depending on $n$ and $s$, when $p=2$. $(-\De)^s_p$ is degenerate when $p>2$ and singular when $1<p<2$. For more details and motivations and  the function spaces $W^{s,p}(\Om)$,  we refer to \cite{cafe, hitch}.  Researchers are paying a lot of attention to the study of fractional and non-local operators of elliptic type due to concrete real world applications in finance, thin obstacle problem, optimization, quasi-geostrophic flow etc. The eigenvalue problem involving $p$-fractional Laplace equations has been extensively studied in \cite{brasco1,brasco2,sq1,linq}. The Brezis Nirenberg type problem involving $p$-fractional Laplacian has been studied in \cite{sq2} whereas existence has been investigated via Morse theory in \cite{iano}. Problems involving $p$-fractional Laplacian has been studied in \cite{ss2,ss1} using Nehari manifold.
A vast amount of literature can be found for the case $p=2$, i.e., fractional Laplacian $(-\De)^s$, which are contributed in recent years. Some of them includes work of Servadei and Valdinoci in \cite{serv1,serv2,serv4} on bounded domains.\\

\noi The study of fractional Schr$\ddot{\text{o}}$dinger equations has attracted the attention of many researchers now a days. Fr\"{o}lich et al. studied nonlinear Hartree equations in \cite{FTY1,FTY2}.  In the nonlocal case, using variational methods and the Ljusternik –Schnirelmann category theory, L\"{u} and Xu \cite{LuXu} proved existence and multiplicity for the equation
\[\e^{2s}(-\De)^su + V(x)u = \e^{-\alpha}(W_\alpha(x)* |u|^{p})|u|^{p-2}u \; \text{in}\; \mb R^n,\]
where $\e> 0$ is a parameter, $0 <s< 1, N > 2s, V(x)$ is a continuous potential, and $W_\alpha(x)$ is the Riesz potential. Wu  in \cite{Wu} proved the existence of standing waves by studying the related constrained minimization problems via the concentration-compactness principle for the following  nonlinear fractional Schr\"{o}dinger equations with Hartree type nonlinearity
\[i \psi_t + (-\De)^\alpha\psi- (|\cdot|^{-\gamma}*|\psi|^2)\psi=0,\]
where $ 0<\alpha<1, 0<\gamma<2\alpha$ and $\psi(x,t)$ is a complex-valued function on $\mb R^d\times \mb R, d\geq1$. Some recent works on Sch\"{o}dinger equations with fractional Laplacian equation includes \cite{ASS,Ge,TS-cho,SGY} with no attempt to provide a complete list.  Existence of solutions for the equation of the type
\[ -\De u + w(x)u = (I_\alpha * |u|^p)|u|^{p-2}u \; \text{in} \; \mb R^n,\]
where $w(x)$ is appropriate function, $I_\alpha$ is Reisz potential and $p>1$ is chosen appropriately,
have been studied in \cite{ANY, clsa, ghsc, mosc, wang}.  Very recently, Ghimenti, Moroz and Schaftingen \cite{GMS}  proved the existence of least action nodal solution for the above problem taking $w\equiv 1$ and $p=2$. Alves, Figueiredo and Yang \cite{AFY} proved existence of a nontrivial solution via penalization method for the following Choquard equation
\begin{equation*}
-\De u+V(x)u= (|x|^{-\mu}*F(u))f(u)\; \text{in}\; \mb R^n,
\end{equation*}
 where $0 <\mu < N,\; N = 3, \;V$ is a continuous real function and $F$ is the primitive function of $f$. Alves and Yang also studied quasilinear Choquard equation in \cite{ay1,ay2,ay3}. For more study, we also refer \cite{moroz1,moroz2,moroz3} to the readers.\\

\noi System of elliptic equations involving fractional Laplacian and homogeneous nonlinearity has been studied in \cite{GPS,Sqsn} and $p$-fractional elliptic systems has been studied in \cite{CD,CS}  using Nehari manifold techniques. Very recently, Guo et al. \cite{guo} studied  a nonlocal system involving fractional Sobolev critical exponent and fractional Laplacian. There are not many results on elliptic systems with non-homogeneous nonlinearities in the literature.  We also cite \cite{choi,faria,bisci,ww} as some very recent works on the study of fractional elliptic systems. \\

 \noi Our work is motivated by the work of Tarantello \cite{tarantello} where author used the structure of associated Nehari manifold to obtian the multiplicity of solutions  for the following nonhomogeneous Dirichlet problem on bounded domain $\Om$
 \begin{align*}
 -\De u = |u|^{2^*-2}u+f \;\text{in}\; \Om,\; u=0 \;\text{on}\; \partial \Om
 \end{align*}
 %Recently,  Bouchekif and Rimouche \cite{BR} considered the problem with weight function $p(x)$:
 %\begin{equation*}
  %-div(p(x)\nabla u) = |u|^{2^*-2}u+\la f \;\text{in}\; \Om,\; u=0 \;\text{on}\; \partial \Om
 %\end{equation*}
 %and studied the relationship between the behavior of $p$ near its minima on the existence of solutions.
Concerning the nonhomogeneous system, Wang et. al  \cite{wang} studied the problem $(P)$ in the local case $s=1$ and obtained a partial multiplicity results. In this paper, we improve their results and show the multiplicity results for $f_1$ and $f_2$ satisfying a weaker assumption \eqref{star0} below.  We also cite \cite{XXW} where multiplicity of positive solutions for nonhomogeneous Choquard equation has been shown using Nehari manifold. We need the following function spaces:

\noi Let us consider the Banach space
\[Y_i:= \left\{u \in W^{s,p}(\mb R^n): \; \int_{\mb R^n}a_i(x)|u|^p~dx < +\infty \right\}\]
equipped with the norm
\[\|u\|_{Y_i}^p = \int_{\mb R^n}\int_{\mb R^n}\frac{|u(x)-u(y)|^p}{|x-y|^{n+sp}}dxdy+ \int_{\mb R^n}a_i(x)|u|^pdx,\; i=1,2.  \]
We define the product space
$Y= Y_1 \times Y_2$ which forms a Banach space with the norm
\[\|(u,v)\|^p := \|u\|_{Y_1}^p+ \|v\|_{Y_2}^p, \]
for all $(u,v)\in Y$.
Throughout this paper, we  assume the following condition on $a_i$, for $i=1,2$
\begin{equation*}\label{cond-on-lambda}
(A)\;\; a_i \in C(\mb R^n,\mb R),\; a_i >0 \; \text{and there exists}\; M_i>0 \; \text{such that}\; \mu(\{x\in \mb R^n: a_i \leq M_i\})< \infty.
%\lim_{|x|\to \infty}a_i(x)=0, \;\text{for}\; i=1,2
\end{equation*}
Then under the condition (A) on $a_i$, for $i=1,2$, we get $Y_i \hookrightarrow L^r(\mb R^n)$ continuously for $r \in [p,p^*_s]$.
 %and compactly for $r\in [p,p^*_s)$.
To obtain our result, we assume the following condition on perturbation terms:
 \begin{equation}\label{star0}
\int_{\mb R^n} (f_1u+ f_2 v)< C_{p,q}\left(\frac{2q+p-1}{4pq}\right)\|(u,v)\|^{\frac{p(2q-1)}{2q-p}}
\end{equation}
for all $(u,v)\in Y$ such that $$\int_{\mb R^n}\left(\alpha(|x|^{-\mu}*|u|^q)|u|^q +2\beta (|x|^{-\mu}*|u|^q)|v|^q
+\gamma (|x|^{-\mu}*|v|^q)|v|^q \right)dx= 1$$ and
\[C_{p,q}= \left(\frac{p-1}{2q-1}\right)^{\frac{2q-1}{2q-p}}\left(\frac{2q-p}{p-1}\right).\]
It is easy to see that
\[2q > p\left(\frac{2n-\mu}{n}\right) > p-1> \frac{p-1}{2p-1} \]
which implies
\[\frac{2q+p-1}{4pq}<1.\]
So \eqref{star0} implies that
\begin{equation}\label{star00}
\int_{\mb R^n} (f_1u+ f_2 v)< C_{p,q}\|(u,v)\|^{\frac{p(2q-1)}{2q-p}}
\end{equation}
which we will use more frequently rather than our actual assumption \eqref{star0}. The importance of the assumption \eqref{star0} instead of \eqref{star00} can be felt in Lemma \ref{inf-pos-neg}. If $f_1,f_2=0$, then we always have a solution for $(P)$ that is the trivial solution. Now, the main results of this paper goes as follows.
 \begin{Theorem}\label{mainthrm}
Suppose
 $\displaystyle\frac{p}{2}\left(\frac{2n-\mu}{n}\right)< q < \displaystyle\frac{p}{2}\left(\frac{2n-\mu}{n-sp}\right)$,
  $\mu \in (0,n)$ and $(A)$ holds true. Let $0 \not \equiv f_1,f_2 \in L^{\frac{q}{q-1}}(\mb R^n)$ satisfies \eqref{star0}
  then $(P)$ has a weak solution which is a local minimum of $J$ on $Y$.
  %and satisfies \[\inf_{(u,v)\in \mc N}J(u,v)= J(u_0,v_0).\]
   Moreover if $f_1,f_2 \geq 0$ then this solution is a nonnegative weak solution.
 \end{Theorem}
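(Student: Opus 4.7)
The plan is to follow a Nehari manifold decomposition in the spirit of Tarantello \cite{tarantello}, adapted to the doubly nonlocal $p$-fractional system. I work with the energy functional
\[
J(u,v)=\frac{1}{p}\|(u,v)\|^p - \frac{1}{2q}\mc{B}(u,v) - \int_{\mb R^n}(f_1u+f_2v)\,dx,
\]
where $\mc{B}(u,v)=\int_{\mb R^n}\bigl(\alpha(|x|^{-\mu}*|u|^q)|u|^q+2\beta(|x|^{-\mu}*|u|^q)|v|^q+\gamma(|x|^{-\mu}*|v|^q)|v|^q\bigr)dx$. Using Hardy--Littlewood--Sobolev together with the assumed range of $q$, and the continuous embedding $Y_i\hookrightarrow L^r(\mb R^n)$ for $r\in[p,p^*_s]$ granted by $(A)$, one verifies $J\in C^1(Y,\mb R)$ with $\ld J'(u,v),(\varphi,\psi)\rd$ the natural dual pairing. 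I then introduce $\mc{N}=\{(u,v)\in Y\setminus\{0\}:\ld J'(u,v),(u,v)\rd=0\}$ and, for each $(u,v)\neq 0$, the fiber map $\phi_{u,v}(t)=J(tu,tv)$, so that $(u,v)\in\mc{N}$ iff $\phi'_{u,v}(1)=0$.

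\textbf{Fiber analysis and the role of \eqref{star0}.} Writing
\[
\phi'_{u,v}(t)=t^{p-1}\|(u,v)\|^p-t^{2q-1}\mc{B}(u,v)-\int(f_1u+f_2v)\,dx,
\]
direct maximization of $h(t)=t^{p-1}\|(u,v)\|^p-t^{2q-1}\mc{B}(u,v)$ in $t>0$ yields, after normalizing $\mc{B}(u,v)=1$, the envelope value $C_{p,q}\|(u,v)\|^{p(2q-1)/(2q-p)}$. Hypothesis \eqref{star0} gives a strict gap of size $\tfrac{2q+p-1}{4pq}<1$ against this envelope. This gap is precisely what forces (i) $\mc{N}^0:=\{(u,v)\in\mc{N}:\phi''_{u,v}(1)=0\}=\emptyset$, and (ii) the existence, for every $(u,v)\in Y\setminus\{0\}$ with $\int(f_1u+f_2v)>0$, of two distinct positive critical points $t^+(u,v)<t^-(u,v)$ of $\phi_{u,v}$, corresponding respectively to a local minimum and a local maximum, and yielding the disjoint $C^1$ decomposition $\mc{N}=\mc{N}^+\cup\mc{N}^-$ with $t^\pm(u,v)$ continuous in $(u,v)$. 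Coercivity of $J|_{\mc{N}^+}$ follows from $p<2q$ together with a H\"older estimate on the perturbation term using $f_i\in L^{q/(q-1)}(\mb R^n)$.

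\textbf{Minimization on $\mc{N}^+$ and weak limit.} Choosing any pair with $\int(f_1u+f_2v)>0$ and evaluating $J$ at its projection $t^+(u,v)(u,v)\in\mc{N}^+$ gives a negative value, so $c^+:=\inf_{\mc{N}^+}J<0$. Ekeland's variational principle on the complete set $\ov{\mc{N}^+}$ (here the emptiness of $\mc{N}^0$ is essential) produces a bounded Palais--Smale sequence $(u_n,v_n)\subset\mc{N}^+$ at level $c^+$. Along a subsequence $(u_n,v_n)\rightharpoonup(u_0,v_0)$ in $Y$ and, by condition $(A)$, $(u_n,v_n)\to(u_0,v_0)$ strongly in $L^r(\mb R^n)\times L^r(\mb R^n)$ for $r\in[p,p^*_s)$. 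The main obstacle is the simultaneous passage to the limit in the two nonlinear nonlocal objects: for the Gagliardo seminorm I would invoke weak convergence of the nonlinear term $|u_n(x)-u_n(y)|^{p-2}(u_n(x)-u_n(y))/|x-y|^{(n+sp)/p'}$ in $L^{p'}(\mb R^{2n})$ combined with a Brezis--Lieb decomposition of the double integral, while HLS together with the strong $L^r$ convergence handles the Choquard convolutions. These pieces give $J'(u_0,v_0)=0$; the bound $c^+<0$ rules out the trivial limit, and the open condition $\phi''_{u,v}(1)>0$ places $(u_0,v_0)\in\mc{N}^+$, so $(u_0,v_0)$ is the desired local minimum of $J$ on $Y$ and a weak solution of $(P)$. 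Finally, when $f_1,f_2\geq 0$, the inequalities $\||u|\|_{Y_i}\leq\|u\|_{Y_i}$ (from $||a|-|b||\leq|a-b|$ in the Gagliardo integrand), invariance of $\mc{B}$ under $u\mapsto|u|$, and $\int f_i|u_0|\geq\int f_iu_0$ allow the reprojection $t^+(|u_0|,|v_0|)(|u_0|,|v_0|)\in\mc{N}^+$ to inherit the energy level $c^+$, producing a nonnegative weak solution.
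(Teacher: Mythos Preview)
Your proposal is correct and follows essentially the same Tarantello-type Nehari strategy as the paper: fiber-map analysis under \eqref{star0} to make $\mc N^0$ trivial, Ekeland plus the compact embedding granted by condition $(A)$ to extract a minimizer, and reprojection of $(|u_0|,|v_0|)$ for the nonnegative case. The only organizational difference is that the paper minimizes $J$ over the whole $\mc N$ and then argues, via the implicit-function projection of Lemma~\ref{IFT} together with the monotonicity of the fiber map on $[0,t_0]$, that the minimizer lies in $\mc N^+$ and is a local minimum of $J$ in all of $Y$ (Corollary~\ref{min-achvd-N+}); you minimize directly on $\overline{\mc N^+}$, but your final clause ``so $(u_0,v_0)$ is the desired local minimum of $J$ on $Y$'' still requires that same projection-and-monotonicity argument, which you have the ingredients for but do not spell out.
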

 \begin{Theorem}\label{mainthrm2}
Under the hypothesis of Theorem \ref{mainthrm}, $(P)$ has second weak solution $(u_1,v_1)$ in $Y$.
% which is such that \[\inf_{(u,v)\in \mc N^-}J(u,v)= J(u_1,v_1).\]
 Also if $f_1,f_2 \geq 0$ then the second solution is non negative.
 \end{Theorem}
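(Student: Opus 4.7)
The plan is to obtain the second solution by minimizing $J$ on the ``outer'' component of the Nehari manifold, following the strategy of Tarantello adapted to the doubly nonlocal $p$-fractional system. Introduce
\[
\mc N = \{(u,v) \in Y\setminus\{(0,0)\} : \ld J'(u,v),(u,v)\rd = 0\}
\]
and the fibering map $\phi_{(u,v)}(t) = J(tu,tv)$, decomposing $\mc N = \mc N^+ \cup \mc N^0 \cup \mc N^-$ according to the sign of $\phi''_{(u,v)}(1)$. The first step is to verify that, under assumption \eqref{star0}, one has $\mc N^0 = \emptyset$, so that $\mc N$ is a $C^1$ submanifold of $Y$ and for each nonzero $(u,v)$ there are unique $0 < t^+(u,v) < t^-(u,v)$ with $(t^\pm u, t^\pm v) \in \mc N^\pm$. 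The precise numerical constant $C_{p,q}\,\frac{2q+p-1}{4pq}$ in \eqref{star0}, rather than merely $C_{p,q}$ as in \eqref{star00}, is exactly what is needed to rule out $\mc N^0 \neq \emptyset$ and to guarantee the strict gap $c^- := \inf_{\mc N^-} J > c^+ := \inf_{\mc N^+} J$, where $c^+$ is attained by the local minimum produced in Theorem \ref{mainthrm}.

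Next, apply Ekeland's variational principle on the (weakly closed) set $\mc N^-$ to extract a sequence $(u_n,v_n) \subset \mc N^-$ with $J(u_n,v_n) \to c^-$ which is a constrained $(PS)_{c^-}$ sequence for $J|_{\mc N}$. Because $\mc N^0 = \emptyset$, a standard Lagrange-multiplier argument shows that the multipliers vanish in the limit and this is in fact a free $(PS)_{c^-}$ sequence for $J$ on $Y$. Boundedness of $(u_n,v_n)$ in $Y$ follows from the coercivity of $J$ restricted to $\mc N$: the assumption $q > \frac{p}{2}(2-\frac{\mu}{n})$ makes the Choquard energy dominated by $\|(u,v)\|^p$ up to a higher power, and the linear perturbation is controlled by \eqref{star00}. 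Passing to a subsequence, $(u_n,v_n) \rightharpoonup (u_1,v_1)$ in $Y$, and by assumption $(A)$ the embedding $Y_i \hookrightarrow L^r(\mb R^n)$ is compact for $r \in [p, p_s^*)$, giving strong convergence in every strictly subcritical Lebesgue space.

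The main obstacle is passing to the limit in the nonlocal Choquard nonlinearities and showing the limit lies in $\mc N^-$ (in particular, is nonzero). For the convolution terms I would use the Hardy--Littlewood--Sobolev inequality together with a Brezis--Lieb type decomposition: since $q < \frac{p_s^*}{2}(2-\frac{\mu}{n})$ is strictly below the critical Hartree exponent, the compact subcritical embedding yields
\[
\int_{\mb R^n}(|x|^{-\mu}*|u_n|^q)|u_n|^q \, dx \longrightarrow \int_{\mb R^n}(|x|^{-\mu}*|u_1|^q)|u_1|^q \, dx,
\]
and analogously for the mixed term with $v_n$. The fractional $p$-Laplacian part is handled in the usual way via Brezis--Lieb on the Gagliardo seminorm combined with the nonlocal convergence lemma for $(-\De)^s_p$. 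Nontriviality $(u_1,v_1) \neq (0,0)$ is forced by the strict inequality $c^- > c^+$ and the fact that energies of points in $\mc N^-$ are bounded away from zero; the fibering property then identifies $(u_1,v_1)$ as the ``$t^-$-projection'' of itself, so $(u_1,v_1) \in \mc N^-$ with $J(u_1,v_1) = c^-$. Since $\mc N^+ \cap \mc N^- = \emptyset$, $(u_1,v_1)$ is a critical point distinct from the solution of Theorem \ref{mainthrm}.

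Finally, for the nonnegativity statement when $f_1,f_2 \geq 0$, one uses the pointwise inequality $\bigl||a|-|b|\bigr| \leq |a-b|$ for the Gagliardo seminorm, that the Choquard terms depend only on $|u|^q$ and $|v|^q$, and that $\int f_i |u| \geq \int f_i u$, to conclude $J(|u|,|v|) \leq J(u,v)$ for every $(u,v) \in Y$. Moreover, $(|u|,|v|)$ has the same projection data on $\mc N^-$, so one may assume the minimizing sequence $(u_n,v_n)$ is componentwise nonnegative, and $(u_1,v_1)$ inherits nonnegativity from a.e.\ convergence along a subsequence.
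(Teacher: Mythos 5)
Your overall strategy is the same as the paper's: Ekeland's variational principle on the closed set $\mc N^-$, passage from a constrained to a free Palais--Smale sequence, compactness of the embedding $Y_i\hookrightarrow L^r(\mb R^n)$ for subcritical $r$ together with Hardy--Littlewood--Sobolev to pass to the limit in the Choquard terms, and replacement by absolute values for the nonnegativity claim. However, there is one substantive gap. The step ``because $\mc N^0=\emptyset$, a standard Lagrange-multiplier argument shows that the multipliers vanish in the limit'' is not automatic: knowing that each individual $(u_n,v_n)$ lies in $\mc N^-$ (so $\varphi''_{(u_n,v_n)}(1)<0$) does not prevent $\varphi''_{(u_n,v_n)}(1)=(p-1)\|(u_n,v_n)\|^p-(2q-1)L(u_n,v_n)$ from tending to $0$ along the sequence, in which case the derivative of the implicit reparametrization blows up and the multiplier need not vanish. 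The paper devotes real work to excluding this (Lemma \ref{IFT}/\ref{IFTN-}, the functional $G$, Lemma \ref{inf-achvd} and Corollary \ref{inf-achvd-cor}): assuming $|(I'(u_n,v_n),(u_n,v_n))|=o(1)$ leads, via the identities on $\mc N$, to $0<\delta d_3^{p/(2q-p)}\le o(1)$, where $\delta>0$ is exactly the quantity that the sharpened constant $C_{p,q}\frac{2q+p-1}{4pq}$ in \eqref{star0} makes positive. So the role you assign to \eqref{star0} (ruling out $\mc N^0$ and separating $c^-$ from $c^+$) is misplaced; its real job is this uniform lower bound on $|\varphi''(1)|$ along the minimizing sequence, and your proof is incomplete without it.

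Two smaller points. First, your claim that $(|u|,|v|)$ ``has the same projection data on $\mc N^-$'' is false in general: the norm and $L$ are unchanged but $K(|u|,|v|)\ge K(u,v)$, so the roots of $\varphi'=0$ move; the correct argument (as in the paper) is to take the minimizer $(u_1,v_1)$, let $t_2$ be the $\mc N^-$-projection parameter of $(|u_1|,|v_1|)$, and use $J(u_1,v_1)=\max_{t\ge t_0}J(tu_1,tv_1)\ge J(t_2u_1,t_2v_1)\ge J(t_2|u_1|,t_2|v_1|)$. Second, nontriviality of the limit does not need the gap $c^->c^+$; it follows more directly from $\Upsilon^->0$ (Lemma \ref{inf-pos-neg}) or from $\mathrm{dist}((0,0),\mc N^-)>0$ (Lemma \ref{N-closed}), both of which the paper establishes and which you should invoke explicitly.
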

 %The necessity of the condition \eqref{star0} will be clear in the due course of the paper.\\

\noi This article is organized as follows: In section 2, we set up our function space where our weak solution lies and recall some important results especially the Hardy-Littlewood-Sobolev inequality. In section 3, we analyze fibering maps while defining the  Nehari manifold and show that minimization of energy functional on suitable subsets of Nehari manifold gives us the weak solution to $(P)$. We study the Palais Smale sequences in section 4. Finally, we prove our main theorem in section 5.

\section{Preliminary results}
In this section, we state some important known results which will be used as tools to prove our main results. The key inequality is the following classical Hardy-Littlewood-Sobolev inequality \cite{lieb}.
 \begin{Proposition}\label{HLS}
(\textbf {Hardy-Littlewood-Sobolev inequality}) Let $t,r>1$ and $0<\mu<n $ with $1/t+\mu/n+1/r=2$, $f \in L^t(\mathbb R^n)$ and $h \in L^r(\mathbb R^n)$. There exists a sharp constant $C(t,n,\mu,r)$, independent of $f,h$ such that
 \begin{equation*}
 \int_{\mb R^n}\int_{\mb R^n} \frac{f(x)h(y)}{|x-y|^{\mu}}\mathrm{d}x\mathrm{d}y \leq C(t,n,\mu,r)\|f\|_{L^t(\mb R^n)}\|h\|_{L^r(\mb R^n)}.
 \end{equation*}
 \end{Proposition}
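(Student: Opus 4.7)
The statement is the classical Hardy--Littlewood--Sobolev inequality, and I would organize its proof as a duality argument followed by Marcinkiewicz interpolation applied to the Riesz convolution with kernel $k(x)=|x|^{-\mu}$. Define $T_\mu h(x) := \int_{\mb R^n} h(y)\,|x-y|^{-\mu}\, dy$. By H\"older's inequality with conjugate exponents $t,t'$, the desired bilinear bound is equivalent to the linear mapping property $\|T_\mu h\|_{L^{t'}(\mb R^n)} \le C\,\|h\|_{L^r(\mb R^n)}$, where the hypothesis $1/t+\mu/n+1/r=2$ forces the target exponent to be exactly $1/t'=1/r+\mu/n-1$. That this is the only admissible exponent is seen from a dilation check: replacing $f,h$ by $f(\lambda\,\cdot),h(\lambda\,\cdot)$ rescales both sides of the double integral, and equality of the resulting powers of $\lambda$ produces precisely the HLS balance $1/t+\mu/n+1/r=2$.

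\textbf{Weak-type endpoint.} For each $\lambda>0$ I would split the kernel as $k = k^\lambda_1 + k^\lambda_2$, where $k^\lambda_2 := k\,\chi_{\{|x|\le \lambda^{-1/\mu}\}}$ (the part where $k>\lambda$) and $k^\lambda_1 := k - k^\lambda_2$. Spherical integration, valid because $0<\mu<n$, gives the explicit bounds $\|k^\lambda_2\|_{L^1} = C_1\,\lambda^{-(n-\mu)/\mu}$ and $\|k^\lambda_1\|_{L^{r'}} = C_2\,\lambda^{\,1-n/(\mu r')}$, both finite in the admissible HLS range (the second requires $r'>n/\mu$, equivalent under the HLS balance to $t>1$). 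Young's convolution inequality then yields
$$\|k^\lambda_1 * h\|_{L^\infty} \le \|k^\lambda_1\|_{L^{r'}}\,\|h\|_{L^r}, \qquad \|k^\lambda_2 * h\|_{L^r} \le \|k^\lambda_2\|_{L^1}\,\|h\|_{L^r}.$$
For each $\alpha>0$ I would choose $\lambda=\lambda(\alpha,\|h\|_{L^r})$ so that the first bound equals $\alpha$; then $\{|T_\mu h|>2\alpha\}\subset\{|k^\lambda_2 * h|>\alpha\}$, and Chebyshev's inequality together with the explicit $\lambda$-dependence and the identity $1/r'=1-1/r$ collapse the computation to the weak-type bound $|\{|T_\mu h|>\alpha\}|\le C\,(\|h\|_{L^r}/\alpha)^{t'}$.

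\textbf{Strong-type step and main obstacle.} Since the preceding weak-type estimate is valid along a neighborhood of the critical pair $(r,t')$ on the scaling line, Marcinkiewicz interpolation between two such pairs upgrades it to the strong-type inequality $\|T_\mu h\|_{L^{t'}} \le C(t,n,\mu,r)\,\|h\|_{L^r}$; inserting this back into the H\"older estimate of the first step closes the argument. The main obstacle is the arithmetic bookkeeping in the weak-type step: with the optimal $\lambda = \lambda(\alpha,\|h\|_{L^r})$, one must verify that the exponent of $\alpha$ in the distributional bound is precisely $-t'$, and this cancellation is in essence the HLS scaling identity itself. I note that obtaining the \emph{sharp} constant of Lieb's theorem \cite{lieb} is not accomplished by this route and requires a separate argument via the Riesz rearrangement inequality (reducing to radial decreasing functions) followed by conformal symmetrization; for the applications in the present paper, however, the existence of a finite $C(t,n,\mu,r)$ produced above suffices.
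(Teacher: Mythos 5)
The paper does not prove this Proposition at all: it is quoted as a classical black-box result with a citation to Lieb--Loss \cite{lieb}, and only the existence of a finite constant (not its sharp value) is ever used downstream, e.g.\ in \eqref{ineq1} and in Lemma \ref{compact-PS-seq}. Your argument is therefore necessarily a ``different route,'' and it is a correct one: it is the standard Stein-type proof of fractional integration, namely duality to reduce to $\|T_\mu h\|_{L^{t'}}\leq C\|h\|_{L^r}$, the two-piece splitting of the kernel $|x|^{-\mu}$ at height $\lambda$, Young's inequality on each piece, optimization in $\lambda$ to get the weak-type $(r,t')$ bound, and Marcinkiewicz interpolation along the scaling line $1/t+\mu/n+1/r=2$. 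I checked the exponent bookkeeping you flag as the main obstacle: with $\lambda$ chosen so that $\|k_1^\lambda\|_{L^{r'}}\|h\|_{L^r}=\alpha$, the Chebyshev bound $\alpha^{-r}\bigl(\|k_2^\lambda\|_{L^1}\|h\|_{L^r}\bigr)^r$ does collapse to $C\bigl(\|h\|_{L^r}/\alpha\bigr)^{nr'/(\mu r'-n)}$, and $nr'/(\mu r'-n)$ is exactly $t'$ under the HLS balance; the admissibility condition $r'>n/\mu$ is indeed equivalent to $t>1$, and $t'>r$ so the off-diagonal Marcinkiewicz theorem applies between two nearby admissible pairs. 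You are also right to note explicitly that this route does not produce the \emph{sharp} constant asserted in the statement --- that requires Lieb's separate argument via the Riesz rearrangement inequality and conformal symmetry --- but since the paper only ever uses finiteness of $C(t,n,\mu,r)$, your proof fully supports every use made of the Proposition. What each approach buys: the citation keeps the paper short and records sharpness for free; your proof is self-contained, elementary (no rearrangement machinery), and makes visible why the exponent relation is forced by dilation invariance.
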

 \begin{Remark}
 In general, let $f = h= |u|^q$ then by Hardy-Littlewood-Sobolev inequality we get,
\[ \int_{\mb R^n}\int_{\mb R^n} \frac{|u(x)|^q|u(y)|^q}{|x-y|^{\mu}}\mathrm{d}x\mathrm{d}y\]
 is well defined if $|u|^q \in L^t(\mb R^n)$ for some $t>1$ satisfying
 \[\frac{2}{t}+ \frac{\mu}{n}=2.\]
 Since we will be working in the space $ W^{s,p}(\mb R^n)$, by fractional Sobolev Embedding theorems(refer \cite{hitch}), we must have $q t \in [p,p^*_s]$, where $p^*_s = \frac{np}{n-sp}$ i.e.
 \[\frac{p}{2}\left(\frac{2n-\mu}{n}\right)\leq q \leq \frac{p}{2}\left(\frac{2n-\mu}{n-sp}\right).\]
 \end{Remark}
 We define
 \[q_l:= \frac{p}{2}\left(\frac{2n-\mu}{n}\right)\; \text{and}\; q_u:=\frac{p}{2}\left(\frac{2n-\mu}{n-sp}\right).  \]
 Here, $q_l$ and $q_u$ are known as lower and upper critical exponents. We constrain our study only when
 \[\frac{p}{2}\left(\frac{2n-\mu}{n}\right)< q < \frac{p}{2}\left(\frac{2n-\mu}{n-sp}\right).\]

\noi Next result is a basic inequality whose proof can be worked out in similar manner as proof of Proposition 3.2(3.3) of \cite{GhSc}.
\begin{Lemma}\label{ineq}
For $u,v \in L^{\frac{2n}{2n-\mu}}(\mb R^n)$, we have
\[\int_{\mb R^n }\int_{\mb R^n}\frac{|u(x)|^q|v(y)|^q}{|x-y|^\mu}~dxdy \leq \left(\int_{\mb R^n}\int_{ \mb R^n}\frac{|u(x)|^q|u(y)|^q}{|x-y|^\mu}~dxdy \right)^{\frac12}  \left(\int_{\mb R^n}\int_{ \mb R^n}\frac{|v(x)|^q|v(y)|^q}{|x-y|^\mu}~dxdy \right)^{\frac12},\]
where $\mu\in (0,n)$ and $q \in [q_l,q_u]$.
\end{Lemma}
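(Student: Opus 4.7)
The plan is to reduce the inequality to a Cauchy--Schwarz estimate for the symmetric bilinear form
\[B(F,G) \;:=\; \int_{\mathbb R^n}\int_{\mathbb R^n}\frac{F(x)\,G(y)}{|x-y|^{\mu}}\,dx\,dy,\]
applied with $F=|u|^{q}$ and $G=|v|^{q}$. Both choices of argument lie in $L^{\frac{2n}{2n-\mu}}(\mathbb R^n)$ under the stated hypothesis on $q$, so Proposition \ref{HLS} guarantees that all three iterated integrals appearing in the statement are finite. The only real content is therefore the algebraic identity $B(F,G)^{2}\le B(F,F)\,B(G,G)$, which in turn follows from positive semidefiniteness of $B$.

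To establish that positivity I would use the Fourier representation of the Riesz kernel: for $0<\mu<n$, the tempered distribution $|x|^{-\mu}$ has Fourier transform $c_{n,\mu}\,|\xi|^{\mu-n}$ with $c_{n,\mu}>0$. By Plancherel and the convolution theorem,
\[B(\phi,\phi)\;=\;\int_{\mathbb R^n}\phi(x)\,(|x|^{-\mu}\!*\phi)(x)\,dx\;=\;c_{n,\mu}\int_{\mathbb R^n}|\xi|^{\mu-n}\,|\widehat{\phi}(\xi)|^{2}\,d\xi\;\ge\;0\]
for every real-valued $\phi$ in a dense subclass of $L^{\frac{2n}{2n-\mu}}(\mathbb R^n)$, the inequality passing to the closure by the HLS continuity of $B$.

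Given this, I would choose arbitrary real scalars $a,b$ and expand
\[0\;\le\;B(aF-bG,\,aF-bG)\;=\;a^{2}B(F,F)\;-\;2ab\,B(F,G)\;+\;b^{2}B(G,G).\]
Since the quadratic form on the right is nonnegative in $(a,b)$, its discriminant must be $\le 0$, which yields $B(F,G)^{2}\le B(F,F)\,B(G,G)$. Substituting $F=|u|^{q}$, $G=|v|^{q}$ gives exactly the inequality of the lemma after taking square roots.

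The only delicate point will be the density/approximation step needed to legitimately write $B(\phi,\phi)$ via Plancherel for $\phi\in L^{\frac{2n}{2n-\mu}}(\mathbb R^n)$, since $|\xi|^{\mu-n}$ is not integrable at the origin; this is handled in the standard way by first taking Schwartz functions and then extending by continuity using Proposition \ref{HLS}. Everything else (the algebraic Cauchy--Schwarz argument and the substitution of $|u|^{q},|v|^{q}$) is routine.
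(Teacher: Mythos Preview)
Your proposal is correct and follows essentially the same route as the reference the paper cites for this lemma: the paper does not write out its own proof but points to Proposition~3.2(3.3) of \cite{GhSc}, where the inequality is obtained exactly by showing that the Riesz bilinear form $B(F,G)=\int_{\mathbb R^n}\int_{\mathbb R^n}\frac{F(x)G(y)}{|x-y|^{\mu}}\,dx\,dy$ is positive semidefinite (via the Fourier representation of the Riesz kernel) and then applying the Cauchy--Schwarz/discriminant argument with $F=|u|^{q}$, $G=|v|^{q}$. Your handling of the approximation step through Schwartz functions and extension by HLS continuity is the standard one and is fine.
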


We now prove following lemma which is a version of concentration compactness principle proved in Lemma $2.18$ of \cite{ZR}.
\begin{Lemma}\label{conc-comp-lem}
Let $n>sp$ and assume $\{u_k\}$ is bounded in $Y_i$, $i=1,2$ such that it satisfies
\[\lim_{k\to \infty} \sup_{y \in \mb R^n}\int_{B_R(y)}|u_k|^p~dx =0,\]
where $R >0$ and $B_R(y)$ denotes the ball centered at $y$ with radius $R$. Then $u_k \to 0$ strongly in $L^r(\mb R^n)$ for $r \in (p, p^*_s)$, as $k \to \infty$.
\end{Lemma}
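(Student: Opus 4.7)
The plan is to follow the classical Lions vanishing strategy adapted to the fractional $p$-Sobolev setting: first show the conclusion in a single well-chosen intermediate Lebesgue space $L^{r_0}(\mb R^n)$, and then interpolate with the endpoint bounds in $L^p$ and $L^{p^*_s}$ to cover the whole range $r \in (p,p^*_s)$. Since the sequence is bounded in $Y_i$, it is automatically bounded in $L^p(\mb R^n)$ and, by the fractional Sobolev embedding $W^{s,p}(\mb R^n)\hookrightarrow L^{p^*_s}(\mb R^n)$, also in $L^{p^*_s}(\mb R^n)$.

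For the main step I would pick $r_0 := p\bigl(2-\tfrac{p}{p^*_s}\bigr)$; a quick check using $(1-p/p^*_s)^2>0$ shows $r_0\in (p,p^*_s)$, and the interpolation exponent $\theta_0$ defined by $\tfrac{1}{r_0}=\tfrac{1-\theta_0}{p}+\tfrac{\theta_0}{p^*_s}$ satisfies $r_0\theta_0=p$. Hence, for each $y\in\mb R^n$,
\[
\|u_k\|_{L^{r_0}(B_R(y))}^{r_0} \le \|u_k\|_{L^p(B_R(y))}^{\,r_0-p}\,\|u_k\|_{L^{p^*_s}(B_R(y))}^{\,p}.
\]
Cover $\mb R^n$ by a family $\{B_R(y_j)\}$ with overlap number $\kappa=\kappa(n)$. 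Using the fractional Sobolev embedding on each ball (translation invariance makes the constant uniform in $j$),
\[
\|u_k\|_{L^{p^*_s}(B_R(y_j))}^{\,p} \le C\!\left(\int\!\!\!\int_{B_R(y_j)\times B_R(y_j)}\!\!\frac{|u_k(x)-u_k(y)|^p}{|x-y|^{n+sp}}\,dxdy + \int_{B_R(y_j)}|u_k|^p\,dx\right).
\]
Summing over $j$ and using the bounded overlap of the balls (both in $\mb R^n$ and of the diagonal products in $\mb R^{2n}$) yields
\[
\sum_j \|u_k\|_{L^{p^*_s}(B_R(y_j))}^{\,p}\le C\kappa\,\|u_k\|_{Y_i}^p,
\]
which is uniformly bounded. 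Therefore,
\[
\|u_k\|_{L^{r_0}(\mb R^n)}^{r_0}\le C\kappa\,\Bigl(\sup_{y\in\mb R^n}\|u_k\|_{L^p(B_R(y))}^{p}\Bigr)^{\!(r_0-p)/p}\,\|u_k\|_{Y_i}^p \longrightarrow 0
\]
by the vanishing hypothesis.

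Finally, for an arbitrary $r\in (p,p^*_s)$ I would interpolate: if $r<r_0$, write $\|u_k\|_{L^r}\le \|u_k\|_{L^p}^{1-\la}\|u_k\|_{L^{r_0}}^{\la}$ with $\la\in(0,1)$ and use that $\|u_k\|_{L^p}$ is bounded while $\|u_k\|_{L^{r_0}}\to 0$; if $r>r_0$, interpolate instead between $L^{r_0}$ and $L^{p^*_s}$, using the uniform bound of $\{u_k\}$ in $L^{p^*_s}$. Either way $u_k\to 0$ in $L^r(\mb R^n)$, finishing the proof.

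The main technical obstacle is the covering/subadditivity step: controlling the sum of the \emph{local} Gagliardo seminorms on $B_R(y_j)\times B_R(y_j)$ by the \emph{global} one on $\mb R^n\times\mb R^n$. This is only a book-keeping matter rather than a genuine difficulty, because the diagonal product balls inherit the overlap number from the underlying covering in $\mb R^n$; it is nonetheless the one place where the argument differs from the classical local case, where a single local gradient estimate is available instead of a double integral. Once this is handled with a standard Besicovitch-type covering, the rest of the argument is the interpolation/Sobolev embedding routine described above.
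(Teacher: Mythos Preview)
Your argument is correct and follows the classical Lions vanishing strategy more carefully than the paper does. Both proofs cover $\mb R^n$ by balls of radius $R$ with bounded overlap and interpolate between $L^p$ and $L^{p^*_s}$ on each ball. The difference lies in how the $L^{p^*_s}$ factor is handled when summing over the cover. The paper bounds the local norm $\|u_k\|_{L^{p^*_s}(B_R(y))}$ by the global $\|u_k\|_{L^{p^*_s}(\mb R^n)}$ and then ``sums over the family''; but for generic $r\in(p,p^*_s)$ the remaining $L^p$ factor carries the exponent $r(1-\la)<p$, so the resulting sum over infinitely many balls is not obviously finite and the displayed conclusion needs additional justification. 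You sidestep this by first singling out the exponent $r_0$ for which $r_0\theta_0=p$, applying the \emph{local} fractional Sobolev embedding on each ball so that $\sum_j\|u_k\|_{L^{p^*_s}(B_R(y_j))}^{p}$ is controlled by the global $Y_i$ norm via bounded overlap, and only afterwards interpolating to general $r$ using the uniform $L^p$ and $L^{p^*_s}$ bounds. This buys you a genuinely summable quantity at the cost of the extra bookkeeping you flag (the overlap of the diagonal products $B_R(y_j)\times B_R(y_j)$, which is indeed bounded by the same $\kappa$); the paper's shortcut is quicker on the page but glosses over the convergence of the covering sum.
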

\begin{proof}
We prove the result for $i=1$ and for $i=2$, it follows similarly. Let $r \in (p, p^*_s)$, $y \in \mb R^n$ and $R>0$. By using H\"{o}lder inequality, for each $k$ we get
\[\|u_k\|_{L^r(B_R(y))} \leq \|u_k\|^{1-\la}_{L^p(B_R(y))} \|u_k\|^{\la}_{L^{p^*_s}(B_R(y))},\]
where $\frac{1}{r}= \frac{1-\la}{p} +\frac{\la}{p^*_s}$.
Then
\begin{equation}\label{ccl1}
\int_{B_R(y)}|u_k|^r~dx \leq \|u_k\|^{r(1-\la)}_{L^p(B_R(y))} \|u_k\|^{r\la}_{L^{p^*_s}(\mb R^n)}.
\end{equation}
We choose a family of balls $\{B_R(y_i)\}$ where their union covers $\mb R^n$ and are such that each point of $\mb R^n$ is contained in atmost $m$ such balls (where $m$ is a prescribed integer). Now summing \eqref{ccl1} over this family, we obtain
\begin{equation*}
\|u_k\|_{L^r(\mb R^n)}^r \leq m \sup_{y \in \mb R^n}\left( \int_{B_R(y)}|u_k|^p~dx\right)^{\frac{r(1-\la)}{p}}\|u_k\|^{r\la}_{L^{p^*_s}(\mb R^n)}.
\end{equation*}
Using continuous embedding of $Y_1$ in $L^{p^*_s}(\mb R^n)$ and our hypothesis, we get $u_k \to 0$ strongly in $L^r (\mb R^n)$ as $k \to \infty$. \hfill{ \QED}
\end{proof}

 \noi The following is a compactness result for the space $Y_i$, $i=1,2$ which will be used further in our work.
 \begin{Lemma}\label{conc-comp}
 Suppose (A) holds. Then $Y_i$ is compactly embedded in $L^r(\mb R^n)$, $r \in [p,p^*_s)$ and $i=1,2$.
 \end{Lemma}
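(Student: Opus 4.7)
The plan is to take any bounded sequence $\{u_k\} \subset Y_i$ and extract a subsequence that converges strongly in $L^r(\mb R^n)$ for every $r \in [p,p^*_s)$. Reflexivity of $Y_i$ yields a weakly convergent subsequence $u_k \rightharpoonup u$ in $Y_i$, and replacing $u_k$ by $u_k - u$ reduces the problem to $u \equiv 0$. Weak convergence in $Y_i$ passes (via the continuous inclusions $Y_i \hookrightarrow W^{s,p}(\mb R^n) \hookrightarrow W^{s,p}(B_R)$) to weak convergence in $W^{s,p}(B_R)$ on every ball; the classical compact Rellich-type embedding $W^{s,p}(B_R) \hookrightarrow L^p(B_R)$ combined with a diagonal extraction then produces $u_k \to 0$ a.e.\ in $\mb R^n$ and strongly in $L^p_{\mathrm{loc}}(\mb R^n)$.

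The heart of the argument is to upgrade this local convergence to a global $L^p$-statement using hypothesis $(A)$. I would decompose $\mb R^n = \{a_i > M\} \cup \{a_i \leq M\}$ and control the two pieces separately. On the high-weight set the weight does the work,
\[
\int_{\{a_i > M\}} |u_k|^p\,dx \;\leq\; \frac{1}{M}\int_{\mb R^n} a_i(x)\,|u_k|^p\,dx \;\leq\; \frac{\|u_k\|_{Y_i}^p}{M},
\]
which is uniformly small in $k$ for $M$ large. On the low-weight set, which has finite measure by $(A)$, H\"older's inequality with the critical Sobolev exponent gives
\[
\int_{\{a_i \leq M\}\setminus B_R} |u_k|^p\,dx \;\leq\; |\{a_i \leq M\}\setminus B_R|^{sp/n}\,\|u_k\|_{L^{p^*_s}(\mb R^n)}^p,
\]
which is small for $R$ large, uniformly in $k$, since $\|u_k\|_{L^{p^*_s}}$ is controlled by the continuous embedding noted right after $(A)$. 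Together with the $L^p(B_R)$ convergence from the previous step this yields tightness of $\{|u_k|^p\}$, while the same H\"older bound applied on arbitrary small-measure subsets supplies uniform integrability. Vitali's convergence theorem then delivers $u_k \to 0$ strongly in $L^p(\mb R^n)$.

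For the remaining range $r \in (p,p^*_s)$ I would interpolate between $L^p$ and $L^{p^*_s}$:
\[
\|u_k\|_{L^r(\mb R^n)} \;\leq\; \|u_k\|_{L^p(\mb R^n)}^{\theta}\,\|u_k\|_{L^{p^*_s}(\mb R^n)}^{1-\theta}, \qquad \frac{1}{r}=\frac{\theta}{p}+\frac{1-\theta}{p^*_s},
\]
which tends to zero because the first factor vanishes by the previous step and the second is bounded by the continuous embedding of $Y_i$ into $L^{p^*_s}(\mb R^n)$.

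The delicate point I expect is the weight-splitting: the high-weight estimate is useful only when the threshold $M$ can be enlarged freely, whereas $(A)$ as literally stated supplies one specific $M_i$. The customary reading of this hypothesis in the Bartsch--Wang--Willem tradition is that $|\{a_i \leq M\}| < \infty$ for every $M > 0$ (a coercivity-in-measure condition on $a_i$), and it is this stronger form that makes the decomposition above go through; the rest of the proof is then a routine Rellich/H\"older/Vitali chain.
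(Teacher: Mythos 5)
Your proof is correct and follows essentially the same route as the paper's: extract a weak limit, get local compactness, split the exterior of a large ball by the level set $\{a_i< M\}$ so that the weight controls the high set and H\"older plus the finite measure of the low set controls the rest, then pass from $L^p$ to $L^r$, $r\in(p,p^*_s)$, by interpolation with the $L^{p^*_s}$ embedding (the paper routes this last step through Lemma \ref{conc-comp-lem}, whose proof is the same interpolation, and uses a direct tail estimate where you invoke Vitali). Your closing caveat about the threshold is apt but is not a defect of your argument relative to the paper's: the paper's own proof chooses $M>\frac{2}{\e}\sup_k\|u_k\|_{Y_1}^p$ and then applies $(A)$ to that $M$, so it too silently reads $(A)$ as asserting $\mu(\{a_i\le M\})<\infty$ for every $M>0$.
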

 \begin{proof}
 We prove it for $Y_1$ and for $Y_2$, it follows analogously. Let $\{u_k\} \subset Y_1$ be a bounded sequence, upto a subsequence, we may assume that $u_k \rightharpoonup u_0$ weakly in $Y_1$ as $k \to \infty$. Then $u_k \to u_0$ in $L^r_{\text{loc}}(\mb R^n)$ as $k \to \infty$, for $r \in [p,p^*_s)$. We first prove that $u_k \to u_0$ strongly in $L^p (\mb R^n)$. Suppose $\xi_k := \|u_k\|_{L^p(\mb R^n)}$ and $\xi_k \to \xi = \|u_0\|_{L^p(\mb R^n)} $, along a subsequence, as $k\to \infty$. So, $\xi \geq \|u_0\|_{L^p(\mb R^n)}$. We claim that for each $\e >0$, there exist $R >0$ such that
 \[\int_{\mb R^n \setminus B_R(0)}|u_k|^p ~dx < \e \; \text{uniformly in } k.\]
  If this holds then $ u_k \to 0$ strongly in $L^p(\mb R^n)$. Because we already have $u_k|_{B_R(0)} \to 0$ strongly in $L^p(B_R(0))$ as $k \to \infty$ which gives
  \begin{equation*}
  \begin{split}
 \xi\geq \|u_0\|_{L^p(\mb R^n)} &= \|u_0\|_{L^p(B_R(0))}+ \|u_0\|_{L^p(\mb R^n \setminus B_R(0))}\\
  & \geq \lim_{k\to \infty} \|u_k\|_{L^p(B_R(0))}\\
  & \geq \lim_{k\to \infty} \|u_k\|_{L^p(\mb R^n)} - \lim_{k\to \infty} \|u_k\|_{L^p(\mb R^n \setminus B_R(0))}\geq \xi -\e .
  \end{split}
  \end{equation*}
 To prove our claim, let us fix $\e >0$ and choose constants $M,C>0$ such that
 \[ M> \frac{2}{\e}\sup_k \|u_k\|^p_{Y_1}\; \text{and}\; C\geq \sup_{ u\in Y_1\setminus \{0\}}\frac{\|u_k\|_{L^{pr(\mb R^n)}}^p}{\|u_k\|_{Y_1}^p}.\]
 Let $r^\prime $ be such that $1/r+1/r^\prime =1$. Now condition (A) implies for $R>0$ large enough,
 \[\mu \left(\left\{x \in \mb R^n \setminus B_R(0): \; a_1(x)< M\right\} \right)\leq \left(\frac{\e}{2C\sup_k\|u_k\|_{Y_1}^p}\right)^{r^\prime}.\]
 We set
% \begin{equation*}
% \begin{split}
 $A= \{x \in \mb R^n \setminus B_R(0): \; a_1(x)\geq M\}$ and
 $B= \{x \in \mb R^n \setminus B_R(0): \; a_1(x)< M\}$.
% \end{split}
% \end{equation*}
 Then, we get
 \[\int_A |u_k|^p~dx \leq \int_A \frac{a_1(x)}{M}|u_k|^p~dx \leq \frac{1}{M}\|u_k\|_{Y_1}^p\leq \frac{\e}{2}.\]
 Also using H\"{o}lder's inequality, we get
 \begin{equation*}
 %\begin{split}
 \int_{B}|u_k|^p~dx \leq \left( \int_{B}|u_k|^{pr}~dx\right)^{\frac{1}{r}} (\mu(B))^{\frac{1}{r^\prime}}
 \leq C\|u_k\|_{Y_1}^p (\mu(B))^{\frac{1}{r^\prime}}\leq \frac{\e}{2}.
 %\end{split}
 \end{equation*}
 Therefore we can write
 \[\int_{\mb R^n \setminus B_R(0)}|u_k|^p~dx = \int_A |u_k|^p~dx  + \int_B |u_k|^p~dx \leq \e\]
  which implies $u_k \to u_0$ strongly in $L^p(\mb R^n)$. Finally, using Lemma \ref{conc-comp-lem}, it follows that $u_k \to u_0$ strongly in $L^r(\mb R^n)$, for $ r \in [p,p^*_s)$. This establishes the proof.\hfill{\QED}
  \end{proof}

 \noi For our convenience, if $u,\phi \in W^{s,p}(\mb R^n)$, we use the notation $\langle u,\phi\rangle $ to denote
\[\langle u,\phi\rangle := \int_{\mb R^n}\int_{\mb R^n} \frac{(u(x)-u(y))|u(x)-u(y)|^{p-2}(\phi(x)-\phi(y))}{|x-y|^{n+sp}}dxdy. \]
\begin{Definition}
A pair of functions $(u,v)\in Y$ is said to be a weak solution to $(P)$ if
\begin{equation*}\label{def-weak-sol}
\begin{split}
&\langle u,\phi_1\rangle + \int_{\mb R^n}a_1(x)u|u|^{q-2}\phi_1~dx+ \langle v,\phi_2\rangle + \int_{\mb R^n}a_2(x)v|v|^{q-2}\phi_2~dx\\
 & \quad  -\alpha \int_{\mb R^n}(|x|^{-\mu}*|u|^q)u|u|^{q-2}\phi_1 ~dx-\gamma \int_{\mb R^n}(|x|^{-\mu}*|v|^q)v|v|^{q-2}\phi_2 ~dx\\
 & \quad \quad-\beta \int_{\mb R^n}(|x|^{-\mu}*|v|^q)u|u|^{q-2}\phi_1~ dx-\beta \int_{\mb R^n}(|x|^{-\mu}*|u|^q)v|v|^{q-2}\phi_2 ~dx\\
 &\quad \quad \quad- \int_{\mb R^n}(f_1\phi_1 +f_2\phi_2)~dx=0,
\end{split}
\end{equation*}
for all $(\phi_1,\phi_2) \in Y$.
\end{Definition}
Thus we define the energy functional corresponding to $(P)$ as
\begin{equation*}
\begin{split}
J(u,v) &= \frac{1}{p}\|(u,v)\|^p - \frac{1}{2q}\int_{\mb R^n}\left(\alpha(|x|^{-\mu}*|u|^q)|u|^q +\beta (|x|^{-\mu}*|u|^q)|v|^q \right)dx\\
&\quad - \frac{1}{2q}\int_{\mb R^n}\left( \beta (|x|^{-\mu}*|v|^q)|u|^q+ \gamma (|x|^{-\mu}*|v|^q)|v|^q \right)dx
-\int_{\mb R^n } (f_1u+f_2v)dx.
\end{split}
\end{equation*}
It is clear that weak solutions to $(P)$ are critical points of $J$. We have the following symmetric property
\[\int_{\mb R^n}(|x|^{\mu}*|u|^q)|v|^q~dx =\int_{\mb R^n}\int_{\mb R^n}\frac{|u(x)|^q|v(y)|^q}{|x-y|^\mu}~dxdy= \int_{\mb R^n}(|x|^{-\mu}*|v|^q)|u|^q~dx.\]
Therefore $J$ can be written as
\begin{equation*}
\begin{split}
J(u,v) &= \frac{1}{p}\|(u,v)\|^p - \frac{1}{2q}\int_{\mb R^n}\left(\alpha(|x|^{-\mu}*|u|^q)|u|^q +2\beta (|x|^{-\mu}*|u|^q)|v|^q + \gamma (|x|^{-\mu}*|v|^q)|v|^q \right)dx\\
&\quad -\int_{\mb R^n } (f_1u+f_2v)dx.
\end{split}
\end{equation*}
In the context of Hardy- Littlewood-Sobolev inequality i.e. Proposition \ref{HLS}, we get
\begin{equation}\label{ineq1}
\int_{\mb R^n}\int_{\mb R^n}\frac{|u(x)|^q|u(y)|^q}{|x-y|^\mu}dxdy \leq  C\|u\|_{L^{\frac{2nq}{2n-\mu}}(\mb R^n)}^{2q},
\end{equation}
for any $u^q \in L^r(\mb R^n)$, $r >1$, $\mu \in (0,n)$ and $q \in [q_l,q_u]$. Using \eqref{ineq1}, Lemma \ref{ineq} and $f_1, f_2 \in L^{\frac{q}{q-1}}(\mb R^n)$, we conclude that $J$ is well defined. Moreover, it can be shown that $J \in C^2(Y,\mb R)$.\\
\textbf{Notations:}\\
%\begin{enumerate}
1. For any two elements $(z_1,z_2), (w_1,w_2)\in Y$, we define $(z_1,z_2)-(w_1,w_2) := (z_1-w_1,z_2-w_2)$.\\
2. For $t \in \mb R$ and $(u,v)\in Y$, $t(u,v)= (tu,tv)$.
%\end{enumerate}

\section{Nehari manifold and Fibering map analysis}
To find the critical points of $J$, we constraint our functional $J$ on the Nehari manifold
\[\mc N = \{(u,v)\in Y: \; (J^\prime(u,v),(u,v)) =0 \},\]
where
\begin{equation*}
\begin{split}
(J^\prime(u,v),(u,v)) &= \|(u,v)\|^p - \int_{\mb R^n}\left(\alpha(|x|^{-\mu}*|u|^q)|u|^q +2\beta (|x|^{-\mu}*|u|^q)|v|^q\right.\\
&\quad \quad \left. + \gamma (|x|^{-\mu}*|v|^q)|v|^q \right)dx
-\int_{\mb R^n } (f_1u+f_2v)dx.
\end{split}
\end{equation*}
Clearly, every nontrivial weak solution to $(P)$ belongs to $\mc N$. Denote $I(u,v)= (J^\prime (u,v),(u,v))$ and subdivide the set $\mc N$ into three sets as follows:
\begin{equation*}
%\begin{split}
\mc N^\pm = \{(u,v)\in \mc N: \; (I^\prime (u,v),(u,v))\gtrless 0\}\; \text{and}\;
%\mc N^- &= \{(u,v)\in \mc N: \; (I^\prime (u,v),(u,v))<0\},\\
\mc N^0 = \{(u,v)\in \mc N: \; (I^\prime (u,v),(u,v))=0\}.
%\end{split}
\end{equation*}
Here
\begin{equation*}
\begin{split}
(I^\prime(u,v),(u,v)) = p\|(u,v)\|^p - 2q\int_{\mb R^n} &\left(\alpha(|x|^{-\mu}*|u|^q)|u|^q +2\beta (|x|^{-\mu}*|u|^q)|v|^q\right.\\
&\quad \quad \left. + \gamma (|x|^{-\mu}*|v|^q)|v|^q \right)dx
-\int_{\mb R^n } (f_1u+f_2v)dx.
\end{split}
\end{equation*}
Then $\mc N^0$ contains the element $(0,0)$ and $\mc N^+ \cup \mc N^0$ and $\mc N^- \cup \mc N^0$ are closed subsets of $Y$. In the due course of this paper, we will subsequently give reason to divide $\mc N$ into above subsets. For $(u,v)\in Y$, we define the fibering map $\varphi: (0,\infty) \to \mb R $ as
\begin{equation*}
\begin{split}
\varphi(t) = J(tu,tv) = \frac{t^p}{p}\|(u,v)\|^p - \frac{t^{2q}}{2q}\int_{\mb R^n} &\left(\alpha(|x|^{-\mu}*|u|^q)|u|^q +2\beta (|x|^{-\mu}*|u|^q)|v|^q\right.\\
 &\quad\quad  \left.+ \gamma (|x|^{-\mu}*|v|^q)|v|^q \right)dx
 -t\int_{\mb R^n } (f_1u+f_2v)dx.
\end{split}
\end{equation*}
This gives
\begin{equation*}
\begin{split}
\varphi^\prime(t) = t^{p-1}\|(u,v)\|^p - t^{2q-1}\int_{\mb R^n}&\left(\alpha(|x|^{-\mu}*|u|^q)|u|^q +2\beta (|x|^{-\mu}*|u|^q)|v|^q \right.\\
&+ \left.\gamma (|x|^{-\mu}*|v|^q)|v|^q \right)dx -\int_{\mb R^n } (f_1u+f_2v)dx
\end{split}
\end{equation*}
\begin{equation*}
\begin{split}
\text{and}\; \varphi^{\prime\prime}(t)= (p-1)t^{p-2}\|(u,v)\|^p - (2q-1)t^{2q-2}\int_{\mb R^n}&\left(\alpha(|x|^{-\mu}*|u|^q)|u|^q +2\beta (|x|^{-\mu}*|u|^q)|v|^q\right.\\
&\quad \left. + \gamma (|x|^{-\mu}*|v|^q)|v|^q \right)dx.
\end{split}
\end{equation*}
It is easy to see that $(tu,tv)\in \mc N$ if and only if $\varphi^\prime(t)=0$, for $t>0$
i.e
\[\mc N=\{(tu,tu)\in Y: \; \varphi^\prime(t)=0\}.\]
Also, we can check that for $(tu,tv)\in \mc N$, $(I^\prime(tu,tv),(tu,tv))>$ or $<0$ if and only if $\varphi^{\prime\prime}(t)> $ or $<0$ respectively. Therefore, $\mc N^+$, $\mc N^-$ and $\mc N^0$ can also be written as
\begin{equation*}
\mc N^\pm = \{ (tu,tv)\in \mc N:\; \varphi^{\prime\prime}(t)\gtrless 0 \},\\
%\mc N^- &= \{ (tu,tv)\in \mc N:\; \varphi^{\prime\prime}(t)<0 \},\\
\text{ and }\mc N^0 = \{ (tu,tv)\in \mc N:\; \varphi^{\prime\prime}(t)=0 \}.
\end{equation*}
\begin{Lemma}\label{J-bdd-below}
If $f_1,f_2 \in L^{\frac{q}{q-1}}(\mb R^n)$, then $J$ is coercive and bounded below on $\mc N$. Hence $J$ is bounded below on $\mc N^+$ and $\mc N^-$.
\end{Lemma}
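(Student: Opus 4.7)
\noi \textbf{Proof proposal.} The plan is to use the Nehari identity to eliminate the convolution integral from $J$ and reduce the lemma to a one-variable growth estimate. For $(u,v)\in \mc N$ the constraint $(J^\prime(u,v),(u,v)) = 0$ reads
\begin{equation*}
\int_{\mb R^n}\bigl(\alpha(|x|^{-\mu}*|u|^q)|u|^q + 2\beta(|x|^{-\mu}*|u|^q)|v|^q + \gamma(|x|^{-\mu}*|v|^q)|v|^q\bigr)\,dx = \|(u,v)\|^p - \int_{\mb R^n}(f_1u+f_2v)\,dx.
\end{equation*}
Plugging this identity into the formula for $J$ produces the clean expression
\begin{equation*}
J(u,v) = \left(\frac{1}{p}-\frac{1}{2q}\right)\|(u,v)\|^p - \left(1-\frac{1}{2q}\right)\int_{\mb R^n}(f_1u+f_2v)\,dx.
\end{equation*}
Since the standing assumption $q > q_l = \frac{p}{2}(2-\mu/n)$ forces $2q > p(2-\mu/n) > p$, and since trivially $q > 1$, both coefficients $\frac{1}{p}-\frac{1}{2q}$ and $1-\frac{1}{2q}$ are strictly positive.

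Next I would control the linear term via H\"older's inequality combined with the continuous embedding $Y_i \hookrightarrow L^q(\mb R^n)$ provided by Lemma \ref{conc-comp}: there exists $C > 0$ such that
\begin{equation*}
\left|\int_{\mb R^n}(f_1u+f_2v)\,dx\right| \leq C\bigl(\|f_1\|_{L^{q/(q-1)}}+\|f_2\|_{L^{q/(q-1)}}\bigr)\|(u,v)\|.
\end{equation*}
Setting $A=\frac{1}{p}-\frac{1}{2q}>0$ and $B=\bigl(1-\frac{1}{2q}\bigr)C\bigl(\|f_1\|_{L^{q/(q-1)}}+\|f_2\|_{L^{q/(q-1)}}\bigr)\geq 0$, I thus obtain, uniformly over $(u,v)\in\mc N$,
\begin{equation*}
J(u,v) \geq A\|(u,v)\|^p - B\|(u,v)\|.
\end{equation*}

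Since $p \geq 2 > 1$, the scalar map $t\mapsto At^p-Bt$ is bounded below on $[0,\infty)$ and diverges to $+\infty$ as $t\to\infty$. This simultaneously delivers coercivity and a uniform lower bound for $J$ on $\mc N$, and therefore also on the subsets $\mc N^+$ and $\mc N^-$. I do not expect any serious obstacle: the reduction via the Nehari constraint is purely algebraic, and the only slightly delicate point is verifying that the Sobolev-type embedding of $Y_i$ into $L^q$ covers the full admissible range of $q$, so that the $f_i$-integrals may indeed be controlled linearly by $\|(u,v)\|$.
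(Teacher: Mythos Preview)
Your proposal is correct and follows essentially the same route as the paper: substitute the Nehari constraint to replace the convolution term, arriving at $J(u,v)=\frac{2q-p}{2pq}\|(u,v)\|^p-\frac{2q-1}{2q}\int_{\mb R^n}(f_1u+f_2v)\,dx$, then bound the linear term by H\"older and the continuous embedding $Y_i\hookrightarrow L^q(\mb R^n)$ to obtain a lower estimate of the form $A\|(u,v)\|^p-B\|(u,v)\|$. The only cosmetic difference is that the paper factors out $\|(u,v)\|$ explicitly while you phrase the conclusion via the scalar function $t\mapsto At^p-Bt$; also note that the relevant embedding is the \emph{continuous} one stated just before Lemma~\ref{conc-comp}, not Lemma~\ref{conc-comp} itself (which is the compact embedding).
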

\begin{proof}
Let $(u,v) \in \mc N$, then $(J^\prime(u,v),(u,v))=0$ i.e.
\[\|(u,v)\|^p - \int_{\mb R^n}(f_1u+f_2v)~dx = L(u,v).\]
Using this we obtain
\begin{equation*}
\begin{split}
J(u,v) &= \left( \frac{2q-p}{2qp}\right)\|(u,v)\|^p - \left( \frac{2q-1}{2q}\right)\int_{\mb R^n}(f_1u+f_2v)~dx\\
&\geq \left( \frac{2q-p}{2qp}\right)\|(u,v)\|^p - \left(\frac{2q-1}{2q}\right) \|f_1\|_{L^{\frac{q}{q-1}}(\mb R^n)}\|u\|_{L^q(\mb R^n)}+ \|f_2\|_{L^{\frac{q}{q-1}}(\mb R^n)}\|v\|_{L^q(\mb R^n)}\\
& \geq \|(u,v)\|\left(\left( \frac{2q-p}{2qp} \right)\|(u,v)\|^{p-1} - \left( \frac{2q-1}{2q}\right) (S_{q,1}+S_{q,2})\right.\\
& \quad \quad \quad \quad \quad  \left.\max\left\{\|f_1\|_{L^{\frac{q}{q-1}}(\mb R^n)},\|f_2\|_{L^{\frac{q}{q-1}}(\mb R^n)}\right\}\right),
\end{split}
\end{equation*}
where $S_{q,i}$ denotes the best constant for the embedding $Y \hookrightarrow L^q(\mb R^n)$, $i=1,2$. This implies that $J$ is coercive and bounded below on $\mc N$. \hfill{\QED}
\end{proof}

\noi Thus it is natural to consider a minimization problem on $\mc N$ or its subsets. We fix $(u,v)\in Y$ and define
\[m(t):= t^{p-1}\|(u,v)\|^p - t^{2q-1}\int_{\mb R^n}\left(\alpha(|x|^{-\mu}*|u|^q)|u|^q +2\beta (|x|^{-\mu}*|u|^q)|v|^q
+\gamma (|x|^{-\mu}*|v|^q)|v|^q \right)dx,\]
\[ K:= K(u,v)= \int_{\mb R^n } (f_1u+f_2v)dx \]
\[\text{and}\; L = L(u,v)= \int_{\mb R^n}\left(\alpha(|x|^{-\mu}*|u|^q)|u|^q +2\beta (|x|^{-\mu}*|u|^q)|v|^q
+\gamma (|x|^{-\mu}*|v|^q)|v|^q \right)dx.  \]
Then $\varphi^\prime(t)=0$ iff $m(t)= K$. Since $p\left(\frac{2n-\mu}{n}\right) < 2q$ and $\left(\frac{2n-\mu}{n}\right)>1$, we get $p<2q$ which implies $\lim\limits_{t\to +\infty}m(t)=-\infty$. Also $\lim\limits_{t\to0}m(t)=0$ and it is easy to check that
\[t_0= \left(\frac{(p-1)\|(u,v)\|^p }{(2q-1)L} \right)^{\frac{1}{2q-p}} \]
 is a point of global maximum for $m(t)$. For $t>0$ small enough, $m(t)>0$. Altogether, this implies that if we choose $K>0$ sufficiently small then $m(t)=K$ is satisfied in such a way that $\varphi^\prime(t)=0$ has two positive solutions $t_1,t_2$ such that $0<t_1<t_0<t_2$. Then according to the sign of $\varphi^{\prime\prime}(t_1)$ and $\varphi^{\prime \prime}(t_2)$, we decide in which subset(i.e  $\mc N^+, \mc N^-, \mc N^0$) they lie. Hence the sets $\mc N^+, \mc N^-$ and $ \mc N^0$  contains the point of local maximum, local minimum and point of inflexion of the fibering maps.

\noi We end this section with the following two results.
\begin{Lemma}\label{N0-empty}
If $f_1,f_2 \in L^{\frac{q}{q-1}}(\mb R^n)$ are non zero and satisfies \eqref{star0}, then $\mc N^0 =\{(0,0)\}$.
%In particular, for every $(u,v) \in Y$,  there exists $t_- >0$ such that $t_0<t^-$ and $(t^-u, t^-v) \in \mc N^-$ whereas if $K>0$ then there exists $t^+>0$ such that $t^+<t_0$ such that $(t^+u, t^+v) \in \mc N^+$. Also, the following holds
%\begin{equation*}
%\begin{split}
%J(t_2u,t_2v)&= \max_{t\geq t_0} J(tu,tv),\\
%J(t_1u,t_1v)&\leq J(tu,tv),\; \forall t \in [0,t_2].
%\end{split}
%\end{equation*}
\end{Lemma}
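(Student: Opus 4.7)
The plan is to suppose $(u,v)\in\mc N^0$ with $(u,v)\neq(0,0)$ and derive a contradiction with \eqref{star0}. For brevity set
\[
L:=L(u,v)=\int_{\mb R^n}\!\left(\alpha(|x|^{-\mu}*|u|^q)|u|^q+2\beta(|x|^{-\mu}*|u|^q)|v|^q+\gamma(|x|^{-\mu}*|v|^q)|v|^q\right)dx
\]
and $K:=K(u,v)=\int_{\mb R^n}(f_1u+f_2v)\,dx$.

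First I would extract two scalar identities. From $(u,v)\in\mc N$ one has $\|(u,v)\|^p=L+K$, and from $(I'(u,v),(u,v))=0$ one has $p\|(u,v)\|^p=2qL+K$. Solving this two-by-two linear system in $L,K$ gives
\[
L=\frac{p-1}{2q-1}\|(u,v)\|^p,\qquad K=\frac{2q-p}{2q-1}\|(u,v)\|^p.
\]
Since $(u,v)\neq(0,0)$ implies $\|(u,v)\|>0$, in particular $L>0$, so the rescaling in the next step is well defined.

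Next I would rescale onto the level surface where \eqref{star0} applies. With $\lambda=L^{-1/(2q)}$, the pair $(\lambda u,\lambda v)$ satisfies $L(\lambda u,\lambda v)=\lambda^{2q}L=1$. Applying \eqref{star0} to $(\lambda u,\lambda v)$, using the homogeneities $K(\lambda u,\lambda v)=\lambda K$ and $\|(\lambda u,\lambda v)\|=\lambda\|(u,v)\|$, and dividing by $\lambda$ yields
\[
K<C_{p,q}\left(\frac{2q+p-1}{4pq}\right)\lambda^{\frac{p(2q-1)}{2q-p}-1}\|(u,v)\|^{\frac{p(2q-1)}{2q-p}}.
\]
Exponent arithmetic shows $\lambda^{p(2q-1)/(2q-p)-1}=L^{-(p-1)/(2q-p)}$. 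Substituting $L=\frac{p-1}{2q-1}\|(u,v)\|^p$ from the first step, the power of $\|(u,v)\|$ collapses to exactly $p$, and by the precise definition of $C_{p,q}$ one has $C_{p,q}\bigl(\frac{p-1}{2q-1}\bigr)^{-(p-1)/(2q-p)}=\frac{2q-p}{2q-1}$. The net inequality becomes
\[
K<\left(\frac{2q+p-1}{4pq}\right)\frac{2q-p}{2q-1}\|(u,v)\|^p.
\]
Combining with the identity $K=\frac{2q-p}{2q-1}\|(u,v)\|^p$ and cancelling the positive factor $\frac{2q-p}{2q-1}\|(u,v)\|^p$ forces $1<\frac{2q+p-1}{4pq}$, contradicting the bound $\frac{2q+p-1}{4pq}<1$ already verified just before \eqref{star00}. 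Hence no nontrivial $(u,v)$ lies in $\mc N^0$.

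The argument is essentially bookkeeping, but it is quantitatively tight: the sharper factor $\frac{2q+p-1}{4pq}$ in \eqref{star0} is precisely the slack that drives the contradiction. This is exactly the reason, highlighted in the paper's introduction, that the genuine assumption \eqref{star0} is needed here rather than its weaker consequence \eqref{star00}; the main obstacle in the proof is just making sure the exponent cancellations close cleanly, which they do by the tailor-made choice of $C_{p,q}$.
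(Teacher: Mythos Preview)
Your proof is correct but follows a different route from the paper's. The paper argues qualitatively through the fibering map: for any $(u,v)\in Y\setminus\{(0,0)\}$ it studies the auxiliary function $m(t)=t^{p-1}\|(u,v)\|^p-t^{2q-1}L$, locates its unique maximum at $t_0$, and shows that whenever $K<m(t_0)$ (which is exactly what \eqref{star0} guarantees after normalizing to $L=1$) every root of $\varphi'(t)=0$ satisfies $\varphi''\ne 0$, so no point of $\mc N$ lands in $\mc N^0$. That argument simultaneously produces the two scalars $t_1<t_0<t_2$ with $t_1(u,v)\in\mc N^+$, $t_2(u,v)\in\mc N^-$, a structural fact reused in Lemmas~\ref{N-closed}, \ref{inf-pos-neg} and Corollary~\ref{min-achvd-N+}. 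Your approach is more direct and purely algebraic: you extract the two scalar identities forced by membership in $\mc N^0$, rescale to $L=1$, and read off a numerical contradiction. It is cleaner for this isolated lemma, though it does not yield the $\mc N^\pm\neq\emptyset$ information the paper gets for free.

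One small correction to your closing commentary: for \emph{this} lemma the weaker assumption \eqref{star00} already suffices. Running your computation with \eqref{star00} in place of \eqref{star0} gives $K<\frac{2q-p}{2q-1}\|(u,v)\|^p=K$, still a contradiction because the inequality is strict. The extra factor $\frac{2q+p-1}{4pq}<1$ in \eqref{star0} is not the source of the contradiction here; as the paper notes, its real role is in Lemma~\ref{inf-pos-neg}, where it provides the quantitative gap needed to show $\Upsilon^->0$.
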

\begin{proof}
To prove that $\mc N^0 = \{(0,0)\}$, we need to show that for $(u,v)\in Y\setminus \{(0,0)\}$, $\varphi(t)$ has no critical point which is a saddle point. Let $(u,v) \in Y\setminus \{(0,0)\}$. From above analysis, we know that $m(t)$ has unique point of global maximum at $t_{0}$ and
\[m(t_0)= \left( \frac{2q-p}{2q-1}\right)\left(\frac{p-1}{L(2q-1)}\right)^{\frac{p-1}{2q-p}}\|(u,v)\|^{\frac{2q-1}{2q-p}}.\]
%= \left( \frac{2q-p}{2q-1}\right) \left(\frac{p-1}{L(2q-1)}\right)^{\frac{p-1}{2q-p}}
From the analysis of the map $m(t)$ done above, we get that if $0<K< m(t_0)$, then $\varphi^\prime(t)=0$ has exactly two roots $t_1,t_2$ such that $0<t_1<t_0<t_2$ and if $K\leq 0$ then $\varphi^\prime(t)=0$ has only one root $t_3$ such that $t_3> t_0$. Since $\varphi^{\prime \prime}(t)= m^\prime(t)$, we get $\varphi^{\prime \prime}(t_1)>0$, $\varphi^{\prime \prime}(t_2)<0$ and $\varphi^{\prime \prime}(t_3)<0$. Hence if $0<K<m(t_0)$ then $(t_1u, t_1v)\in \mc N^+$, $(t_2u,t_2v)\in \mc N^-$ and if $K\leq 0$ then $(t_3u,t_3v)\in \mc N^-$. This implies $\{ (u,v)\in Y:\;  0<K<m(t_0)\}\cap \mc N^\pm \neq \emptyset$ and  $\{ (u,v)\in Y:\; K\leq 0\}\cap \mc N^- \neq \emptyset$. As a consequence, $\mc N^\pm \neq \emptyset$. We saw that for any sign of $K$, critical point of $\varphi(t)$ is either a point of local maximum or local minimum which implies $\mc N^0 = \{(0,0)\}$. It remains to show that $0< K< m(t_0)$ holds. But that is clearly implied by the condition \eqref{star0} which we have already assumed. This completes the proof.
 \hfill{\QED}
\end{proof}

\begin{Lemma}\label{N-closed}
 Assume $f_1,f_2 \in L^{\frac{q}{q-1}}(\mb R^n)$ satisfies \eqref{star0}, then $\mc N^-$ is closed.
\end{Lemma}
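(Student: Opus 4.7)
The plan is to combine three ingredients: first, the fact already recorded in the paper that $\mc N^- \cup \mc N^0$ is closed in $Y$; second, Lemma \ref{N0-empty}, which identifies $\mc N^0$ with $\{(0,0)\}$; and third, a uniform positive lower bound on $\|(u,v)\|$ that holds for every $(u,v) \in \mc N^-$. The first two ingredients together give the inclusion $\overline{\mc N^-} \subseteq \mc N^- \cup \{(0,0)\}$, so it suffices to produce the lower bound, thereby ruling out $(0,0)$ as a possible limit point.

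For the closedness of $\mc N^- \cup \mc N^0$, I would pick a sequence $(u_n,v_n) \in \mc N^- \cup \mc N^0$ with $(u_n,v_n) \to (u,v)$ in $Y$ and pass to the limit in the two scalar quantities $I(u_n,v_n) = 0$ and $(I'(u_n,v_n),(u_n,v_n)) \le 0$. Continuity of both expressions on $Y$ follows from the continuous embeddings $Y_i \hookrightarrow L^r(\mb R^n)$ for $r \in [p,p^*_s]$ (available under condition $(A)$) together with Proposition \ref{HLS} applied to the Choquard nonlinearities that appear in $L(u,v)$.

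The heart of the argument is the lower bound. Given $(u,v) \in \mc N^-$, the Nehari identity $\|(u,v)\|^p = L(u,v) + K(u,v)$ combined with the strict inequality $p\|(u,v)\|^p - 2q\,L(u,v) - K(u,v) < 0$ eliminates $K(u,v)$ to yield
\[
(p-1)\|(u,v)\|^p \;<\; (2q-1)\,L(u,v).
\]
Since the working hypothesis places $\tfrac{2nq}{2n-\mu}$ in $[p,p^*_s]$, Proposition \ref{HLS} together with the embedding $Y \hookrightarrow L^{2nq/(2n-\mu)}(\mb R^n)$ provides a constant $C>0$, depending only on $n,\mu,q,\alpha,\beta,\gamma$ and the embedding, such that $L(u,v) \le C\|(u,v)\|^{2q}$. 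Because $2q > p$, dividing through by $\|(u,v)\|^p$ gives
\[
\|(u,v)\| \;>\; \left(\frac{p-1}{(2q-1)C}\right)^{\!1/(2q-p)} \;>\; 0,
\]
a strictly positive lower bound uniform over $\mc N^-$. Hence no sequence in $\mc N^-$ can approach $(0,0)$, and the inclusion $\overline{\mc N^-} \subseteq \mc N^- \cup \{(0,0)\}$ collapses to $\overline{\mc N^-} = \mc N^-$.

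I do not expect any serious obstacle here: the only mild subtlety is that the defining inequality for $\mc N^-$ is strict and could in principle degenerate to equality in the limit, which would throw the limit point into $\mc N^0$. But Lemma \ref{N0-empty} already identifies $\mc N^0$ with the origin, and the quantitative bound above prevents the origin from arising as a limit; so any limit of an $\mc N^-$--sequence is automatically nonzero and therefore sits in $\mc N^-$ itself.
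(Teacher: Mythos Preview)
Your proposal is correct and follows essentially the same approach as the paper: both reduce closedness to showing $\mathrm{dist}((0,0),\mc N^-)>0$ via the inequality $(p-1)\|(u,v)\|^p<(2q-1)L(u,v)$ together with the HLS/embedding bound $L(u,v)\le C\|(u,v)\|^{2q}$. The only cosmetic difference is that you obtain this inequality by direct algebraic elimination of $K$ from the two defining relations of $\mc N^-$, whereas the paper reaches it through the fibering map picture (showing $\|(u,v)\|>t_0$ for the normalized pair).
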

\begin{proof}
Let $cl(\mc N^-)$ denotes the closure of $\mc N^-$. Since $cl(\mc N^-) \subset \mc N^- \cup \{(0,0)\}$, it is enough to show that $(0,0) \not\in \mc N^-$ or equivalently $dist((0,0),\mc N^-)>0$. We denote $(\hat u,\hat v) = \frac{(u,v)}{\|(u,v)\|}$ for $(u,v)\in \mc N^-$, then $\|(\hat u, \hat v)\|=1$. Let us consider the fibering map $\varphi(t)$ corresponding to $(\hat u, \hat v)$. From proof of Lemma \ref{N0-empty}, we get that if $K\leq 0$ then $\varphi^\prime (t)=0$ has exactly one root $t_3>t_0$ such that $(t_3\hat u, t_3 \hat v) \in \mc N^-$. If $(t_3\hat u,t_3\hat v)= (u,v) \in \mc N^-$, then $t_3=\|(u,v)\|$. Also, if $0<K<m(t_0)$ then $\varphi^\prime (t)=0$ has exactly two roots $t_1,t_2$ satisfying $t_1<t_0<t_2$ such that $(t_1\hat u,t_1\hat v)\in \mc N^+$ and $(t_2\hat u,t_2\hat v)\in \mc N^-$. Hence if $(t_2\hat u,t_2\hat v) = (u,v)\in \mc N^-$ then $t_2 = \|(u,v)\|$. Since $t_2,t_3>t_0$, we get
$\|(u,v)\| > t_0$. Using Lemma \ref{ineq}, inequality \ref{ineq1}, continuous embedding of $Y$ in $L^r(\mb R^n)$ for $r \in [p,p^*_s]$, $\frac{2nq}{2n-\mu} \in (p,p^*_s)$ and $\|(\hat u,\hat v)\|=1$, we get that
\begin{equation}\label{N-closed1}
\begin{split}
L &\leq C\left(\alpha \|\hat u\|_{L^{\frac{2nq}{2n-\mu}}(\mb R^n)}^{2q} + \gamma \|\hat v\|_{L^{\frac{2nq}{2n-\mu}}(\mb R^n)}^{2q}+ 2\beta\left( \|\hat u\|_{L^{\frac{2nq}{2n-\mu}}(\mb R^n)}^{2q} \right)^{\frac12} \left( \|\hat v\|_{L^{\frac{2nq}{2n-\mu}}(\mb R^n)}^{2q} \right)^{\frac12}\right)\\
&\leq C_1 \left( \alpha\|\hat u\|_{Y_1}^{2q}+ \gamma \|\hat v\|_{Y_2}^{2q}+ 2\beta \|\hat u\|_{Y_1}^{q}\|\hat v\|_{Y_2}^{q} \right)\leq C_2 \|(\hat u,\hat v)\|^{2q},
\end{split}
\end{equation}
where $C_1,C_2>0$ are constants independent of $\hat u$ and $\hat v$. This implies $L $ is bounded above on the unit sphere of $Y$. Since $\|\hat u, \hat v\|=1$, from definition of $t_0$ it follows that
\[t_0 \geq \left(\frac{p-1}{(2q-1)\sup_{\|(u,v)\|=1}L(u,v)}\right)^{\frac{1}{2q-p}}:=\theta.\]
 Therefore, $dist((0,0),\mc N^-) = \inf\limits_{(u,v)\in \mc N^-} \{\|(u,v)\|\} \geq \theta >0$ and this proves the Lemma. \hfill{\QED}
\end{proof}
%\section{Minimization on $\mc N^+$ and $\mc N^-$}
\noi Using Lemma \ref{J-bdd-below}, we can define the following
\begin{equation}\label{min-N+}
\Upsilon^+ := \inf\limits_{(u,v) \in \mc N^+}J(u,v),\; \text{and}\;
\end{equation}
\begin{equation}\label{min-N-}
\Upsilon^- := \inf\limits_{(u,v) \in \mc N^-}J(u,v).
\end{equation}
If the infimum in the above two equations are achieved, then we can show that they form a weak solution of our problem $(P)$.
\begin{Lemma}\label{exist-sol}
Let $(u_1,v_1)$ and $(u_2,v_2)$ are minimizers of $J$ on $\mc N^+$ and $\mc N^-$ respectively. Then $(u_1,v_1)$ and $(u_2,v_2)$ are nontrivial weak solutions of $(P)$.
\end{Lemma}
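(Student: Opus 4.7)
The plan is to recognize each minimizer as a critical point of $J$ restricted to the constraint $\mc N$ and then to upgrade this to an unconstrained critical point by a standard Lagrange multiplier argument. Since $J\in C^2(Y,\mb R)$, the map $I(u,v)=(J'(u,v),(u,v))$ is $C^1$ on $Y$, so $\mc N$ is locally a $C^1$-submanifold of $Y$ near any point where $I'\not\equiv 0$ in $Y^*$. The subsets $\mc N^+$ and $\mc N^-$ are relatively open in $\mc N$ (they are the preimages of open half-lines under $(u,v)\mapsto (I'(u,v),(u,v))$), so any minimizer of $J$ on $\mc N^\pm$ is automatically a constrained critical point with respect to the full Nehari manifold.

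For either $i=1,2$, the inclusion $(u_i,v_i)\in\mc N^\pm$ directly gives $(I'(u_i,v_i),(u_i,v_i))\neq 0$, which in particular forces $I'(u_i,v_i)\neq 0$ in $Y^*$. Hence the Lagrange multiplier theorem produces $\la_i\in\mb R$ with
\begin{equation*}
J'(u_i,v_i)=\la_i\, I'(u_i,v_i)\qquad\text{in } Y^*.
\end{equation*}
Testing this identity with $(u_i,v_i)$ itself and using $(u_i,v_i)\in\mc N$, so that $(J'(u_i,v_i),(u_i,v_i))=I(u_i,v_i)=0$, yields
\begin{equation*}
0=\la_i\,(I'(u_i,v_i),(u_i,v_i)).
\end{equation*}
By the definitions of $\mc N^+$ and $\mc N^-$, the second factor is strictly positive for $i=1$ and strictly negative for $i=2$. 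Consequently $\la_1=\la_2=0$, so $J'(u_i,v_i)=0$ in $Y^*$, which is exactly the definition of a weak solution of $(P)$.

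For nontriviality, note that $(0,0)\in\mc N^0$ while $\mc N^0\cap\mc N^\pm=\emptyset$ by construction, so automatically $(u_i,v_i)\neq(0,0)$. The main subtlety of the argument is the regularity of the constraint needed to invoke Lagrange multipliers; this reduces to a one-line check via the single pairing $(I'(u_i,v_i),(u_i,v_i))\neq 0$ built into the very definition of $\mc N^\pm$. This is precisely what motivated the splitting of $\mc N$ into $\mc N^+,\mc N^-,\mc N^0$, and it is where the assumption \eqref{star0} (through Lemma \ref{N0-empty}) is implicitly used to guarantee the splitting is meaningful.
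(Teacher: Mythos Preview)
Your proof is correct and follows essentially the same route as the paper: both use that $\mc N^\pm$ sits inside the regular part of the constraint $\{I=0\}$ (since $(I'(u_i,v_i),(u_i,v_i))\neq 0$), invoke the Lagrange multiplier rule to get $J'(u_i,v_i)=\la_i I'(u_i,v_i)$, and test with $(u_i,v_i)$ to force $\la_i=0$. Your write-up is slightly more explicit about why $\mc N^\pm$ is relatively open in $\mc N$ and why the constraint is regular, but the argument is the same.
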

\begin{proof}
Let $(u_1,v_1) \in \mc N^+$ such that $J(u_1,v_1)= \Upsilon^+$ and define $V:= \{(u,v)\in Y: \; (I^\prime(u,v),(u,v))>0\}$. So, $\mc N^+ = \{(u,v)\in V:\; J(u,v)=0\}$. Using Theorem $4.1.1$ of \cite{chang} we deduce that there exists Lagrangian multiplier $\la \in \mb R$ such that
\[J^\prime(u_1,v_1)= \la I^\prime(u_1,v_1).\]
Since $(u_1,v_1) \in \mc N^+$, $(J^\prime(u_1,v_1),(u_1,v_1))=0$ and $(I^\prime(u_1,v_1),(u_1,v_1))>0$. This implies $\la=0$. Therefore, $(u_1,v_1)$ is a nontrivial weak solution of $(P)$. Similarly, we can prove that if $(u_2,v_2) \in \mc N^-$ is such that $J(u_2,v_2)= \Upsilon^-$ then $(u_2,v_2)$ is also a nontrivial weak solution of $(P)$.\hfill{\QED}
\end{proof}

\noi Our next result is an observation regarding the minimizers $\Upsilon^+$ and $\Upsilon^-$.
\begin{Lemma}\label{inf-pos-neg}
If $0\neq f_1,f_2 \in L^{\frac{q}{q-1}}(\mb R^n)$ satisfies \eqref{star0} then $ \Upsilon^->0$ and  $\Upsilon^+ < 0 $.
\end{Lemma}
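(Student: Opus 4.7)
The strategy is to combine the Nehari identity $\|(u,v)\|^p = L(u,v) + K(u,v)$ (valid on $\mc N$) with the sign of $\varphi''(1)$ separating $\mc N^+$ from $\mc N^-$. Using the identity to eliminate $K$, or dually $L$, yields the two equivalent forms
\[
J(u,v) \;=\; -\tfrac{p-1}{p}\|(u,v)\|^p + \tfrac{2q-1}{2q} L(u,v) \;=\; \tfrac{2q-p}{2pq}\|(u,v)\|^p - \tfrac{2q-1}{2q} K(u,v),
\]
the first being well adapted to $\mc N^+$ and the second to $\mc N^-$.

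For $\Upsilon^+<0$ I would fix any $(u,v)\in\mc N^+$ (non-empty by Lemma \ref{N0-empty}). Since $\varphi''(1)>0$ reads $(p-1)\|(u,v)\|^p>(2q-1)L(u,v)$, substituting into the first form of $J$ gives
\[
J(u,v) \;<\; \Bigl(-\tfrac{p-1}{p} + \tfrac{p-1}{2q}\Bigr)\|(u,v)\|^p \;=\; \tfrac{(p-1)(p-2q)}{2pq}\|(u,v)\|^p \;<\; 0
\]
because $2q>p$, so $\Upsilon^+\leq J(u,v)<0$.

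For $\Upsilon^->0$, by the second form of $J$ it suffices to prove that $K(u,v)<\tfrac{2q-p}{p(2q-1)}\|(u,v)\|^p$ for every $(u,v)\in\mc N^-$. The key step is to bring assumption \eqref{star0} to bear via rescaling: setting $(\tilde u,\tilde v) := (u,v)/L(u,v)^{1/(2q)}$, the $2q$-homogeneity of $L$ gives $L(\tilde u,\tilde v)=1$, so \eqref{star0} applies to $(\tilde u,\tilde v)$ and, after unwinding the rescaling, produces
\[
K(u,v) \;<\; C_{p,q}\cdot\tfrac{2q+p-1}{4pq}\cdot L(u,v)^{(1-p)/(2q-p)}\,\|(u,v)\|^{p(2q-1)/(2q-p)}.
\]
The exponent $(1-p)/(2q-p)$ is negative, so the $\mc N^-$ lower bound $L(u,v)>\tfrac{p-1}{2q-1}\|(u,v)\|^p$ converts into an upper bound for the $L$-factor; the explicit form of $C_{p,q}$ is rigged so that the resulting $\bigl(\tfrac{p-1}{2q-1}\bigr)$-powers telescope and the surviving power of $\|(u,v)\|$ simplifies to $\|(u,v)\|^p$, giving
\[
K(u,v) \;<\; \tfrac{(2q+p-1)(2q-p)}{4pq(2q-1)}\|(u,v)\|^p.
\]
The desired inequality $\tfrac{(2q+p-1)(2q-p)}{4pq(2q-1)}<\tfrac{2q-p}{p(2q-1)}$ reduces to $\tfrac{2q+p-1}{4q}<1$, equivalently $p-1<2q$, which holds. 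Substituting back in $J$ gives $J(u,v)\geq c_0\|(u,v)\|^p$ with $c_0=\tfrac{(2q-p)(2q-p+1)}{8pq^2}>0$; combined with the uniform bound $\|(u,v)\|\geq\theta>0$ on $\mc N^-$ from Lemma \ref{N-closed}, this produces $\Upsilon^-\geq c_0\theta^p>0$.

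The main technical hurdle is the exponent book-keeping in the rescaled version of \eqref{star0}: it is precisely the negative exponent $(1-p)/(2q-p)$ on $L$ that allows a lower bound on $L$ to yield the desired upper bound on $K$ in terms of $\|(u,v)\|$, and the specific form of $C_{p,q}$ is what causes the algebra to collapse into a clean $\|(u,v)\|^p$-type bound. This also explains why the sharper hypothesis \eqref{star0} (with the factor $\tfrac{2q+p-1}{4pq}$) is genuinely needed rather than the weaker \eqref{star00}: it is this factor $<1$ that pushes the final constant strictly below the threshold $\tfrac{2q-p}{p(2q-1)}$ and forces $J$ to be strictly positive on $\mc N^-$.
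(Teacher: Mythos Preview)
Your proof is correct, and takes a somewhat different route from the paper's.

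For $\Upsilon^+<0$, the paper argues via the shape of the fibering map: since $\varphi'(0^+)=-K<0$ and $t_1$ is a local minimum with $\varphi(0)=0$, one gets $\varphi(t_1)<0$. Your argument is purely algebraic: substitute the $\mc N^+$ inequality $(2q-1)L<(p-1)\|(u,v)\|^p$ into the Nehari form $J=-\tfrac{p-1}{p}\|(u,v)\|^p+\tfrac{2q-1}{2q}L$. Both are short; yours avoids the fibering picture entirely.

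For $\Upsilon^->0$, the paper uses the minmax characterization $\Upsilon^-=\inf_{(u,v)\neq0}\max_t J(tu,tv)\ge\inf_{(u,v)\neq0}\varphi(t_0)$, computes $\varphi(t_0)$ explicitly, bounds the $Kt_0$ term via the rescaled \eqref{star0}, and then controls the remaining ratio $(\|(u,v)\|^p)^{2q/(2q-p)}/L^{p/(2q-p)}$ from below using the Hardy--Littlewood--Sobolev estimate $L\le C_2\|(u,v)\|^{2q}$ directly. You instead stay on $\mc N^-$, use $\varphi''(1)<0$ to turn the rescaled \eqref{star0} into $K<\tfrac{(2q-p)(2q+p-1)}{4pq(2q-1)}\|(u,v)\|^p$, and then call Lemma~\ref{N-closed} (where the HLS input is already packaged) for the uniform lower bound $\|(u,v)\|\ge\theta$. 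Your argument is a bit leaner: it bypasses the explicit $t_0$ computation and the minmax step, and it makes transparent exactly why the constant $\tfrac{2q+p-1}{4pq}$ in \eqref{star0} is needed, namely to push the coefficient on $\|(u,v)\|^p$ strictly below the threshold $\tfrac{2q-p}{p(2q-1)}$. The paper's route, on the other hand, gives a slightly different explicit lower bound for $\Upsilon^-$ in terms of the HLS constant rather than the $\theta$ of Lemma~\ref{N-closed}.
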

\begin{proof}
Let $(u,v)\in Y$ then from the proof of Lemma \ref{N0-empty}, we know that if $f_1,f_2$ satisfies \eqref{star0} then $K< m(t_0)$. In that case if $0<K<m(t_0)$
%\[ K < m(t_0)= \left( \frac{2q-p}{2q-1}\right) \left(\frac{p-1}{L(2q-1)}\right)^{\frac{p-1}{2q-p}} .\]
 then corresponding to $(u,v)$, $\varphi^\prime(t)=0$ has exactly two roots $t_1$ and $t_2$ such that $t_1<t_0<t_2$, $t_1(u,v)\in \mc N^+$ and $t_2(u,v)\in \mc N^-$. Since $\varphi^\prime(t) = \|(u,v)\|t^{p-1} - Lt^{2q-1}-K$, $\lim\limits_{t\to 0^+}\varphi^\prime(t)= -K <0$. Also $\varphi^{\prime \prime}(t)>0$ for all $t \in (0,t_0)$. Since $t_1$ is point of local minimum of $\varphi(t)$, $t_1>0$ and $\lim\limits_{t\to 0^+}\varphi(t)=0$, we get $\varphi(t_1)<0$. Therefore,
\[0> \varphi(t_1)= J(t_1u,t_1v)\geq \Upsilon^+.\]
Now we prove that $\Upsilon^->0$. From \eqref{N-closed1}, we know that $L\leq C_2\|(u,v)\|^{2q}$. This implies that there exists a constant $C_3>0$ which is independent of $(u,v)$ such that
\[\frac{(\|(u,v)\|^p)^{\frac{2q}{2q-p}}}{L^{\frac{p}{2q-p}}}\geq C_3.\]
Now using this and the given hypothesis we consider $\varphi(t_0)$ corresponding to $(u,v)$ as
\begin{equation*}\label{inf-pos-neg1}
\begin{split}
\varphi(t_0) &= \frac{t_0^p}{p}- L\frac{t_0^{2q}}{2q}-Kt_0\\
& =\frac{1}{p}\left(\frac{(p-1)\|(u,v)\|^p }{(2q-1)L} \right)^{\frac{p}{2q-p}}- \frac{L
}{2q}{\left(\frac{(p-1)\|(u,v)\|^p }{(2q-1)L} \right)^{\frac{2q}{2q-p}}}-K \left(\frac{(p-1)\|(u,v)\|^p }{(2q-1)L} \right)^{\frac{1}{2q-p}}
\end{split}
\end{equation*}
\begin{equation*}
\begin{split}
& = \frac{(2q-p)(2q+p-1)}{2qp(2q-1)}\left( \frac{p-1}{2q-1}\right)^{\frac{p}{2q-p}}\frac{(\|(u,v)\|^p)^{\frac{2q}{2q-p}}}{L^{\frac{p}{2q-p}}}- K \left( \frac{p-1}{2q-1}\right)^{\frac{1}{2q-p}} \frac{(\|(u,v)\|^p)^{\frac{1}{2q-p}}}{L^{\frac{1}{2q-p}}}\\
& \geq \left(\frac{(2q-p)(2q+p-1)(p-1)^{\frac{p}{2q-p}}}{2qp(2q-1)^{\frac{2q}{2q-p}}} \right)\frac{(\|(u,v)\|^p)^{\frac{2q}{2q-p}}}{L^{\frac{p}{2q-p}}}\\
& \geq C_3  \left(\frac{(2q-p)(2q+p-1)(p-1)^{\frac{p}{2q-p}}}{2qp(2q-1)^{\frac{2q}{2q-p}}} \right):= M(\text{say}).
\end{split}
\end{equation*}
Hence
\begin{equation*}
\begin{split}
\Upsilon^-= \inf_{(u,v)\in Y\setminus \{(0,0)\}}\max_{t}J(tu,tv) \geq \inf_{(u,v)\in Y\setminus \{(0,0)\}}\varphi(t_0)\geq M >0.
\end{split}
\end{equation*}
which completes the proof.\hfill{\QED}
\end{proof}

\section{Palais-Smale Analysis}
In this section, we study the nature of minimizing sequences for the functional $J$. First we prove some lemmas which will assert the existence of Palais Smale sequence for the minimizer of $J$ on $\mc N$. The following lemma is a consequence of Lemma \ref{N0-empty}.

\begin{Lemma}\label{IFT}
Let $0\neq f_1,f_2 \in L^{\frac{q}{q-1}}(\mb R^n)$ satisfies \eqref{star0}. Given $(u,v)\in \mc N\setminus \{(0,0)\}$ ,there exist $\e>0$ and a differentiable function $\Im : B((0,0),\e) \subset Y \to \mb R^+:=(0,+\infty)$ such that $\Im(0,0)=1$, $\Im(w_1,w_2)((u,v)-(w_1,w_2))\in \mc N$ and
\begin{equation}\label{IFT1}
(\Im^\prime(0,0),(w_1,w_2))= \frac{p(A(w_1,w_2)+ A_2(w_1,w_2))- q R(w_1,w_2)-\int_{\mb R^n} (f_1w_1+f_2w_2)}{(p-1)\|(u,v)\|^p - (2q-1)L(u,v)},
\end{equation}
for all $(w_1,w_2)\in B((0,0),\e)$ where $A_1(w_1,w_2):=\langle u,w_1\rangle+ \int_{\mb R^n}a_1(x)|u|^{p-2}uw_1$, $A_2(w_1,w_2):= \langle v,w_2\rangle+ \int_{\mb R^n}a_2(x)|v|^{p-2}vw_2 $ and
\begin{align*}
R(w_1,w_2) := &2\alpha \int_{\mb R^n} (|x|^{-\mu}* |u|^q)|u|^{q-2}uw_1 + 2\gamma \int_{\mb R^n} (|x|^{-\mu}* |v|^q)|v|^{q-2}vw_2 \\
& \quad + \beta \int_{\mb R^n} (|x|^{-\mu}* |u|^q)|v|^{q-2}vw_2+ \beta \int_{\mb R^n} (|x|^{-\mu}* |v|^q)|u|^{q-2}uw_1.
\end{align*}
\end{Lemma}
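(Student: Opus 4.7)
The proof is a direct application of the implicit function theorem (IFT) to the Nehari functional $I(u,v) := (J'(u,v),(u,v))$ restricted to the curve of rescaled perturbations through $(u,v)$. The plan is to define
\[F \colon \mb R \times Y \to \mb R, \qquad F(t, (w_1,w_2)) := I\bigl(t((u,v)-(w_1,w_2))\bigr),\]
so explicitly
\[F(t,(w_1,w_2)) = t^p \|(u,v)-(w_1,w_2)\|^p - t^{2q} L((u,v)-(w_1,w_2)) - t \int_{\mb R^n} (f_1(u-w_1)+f_2(v-w_2))\,dx.\]
Since $J \in C^2(Y,\mb R)$, the map $F$ is jointly $C^1$ on $\mb R \times Y$, and $F(1,(0,0)) = I(u,v) = 0$ because $(u,v) \in \mc N$.

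The IFT nondegeneracy hypothesis is exactly the statement $(u,v) \notin \mc N^0$. Differentiating $F$ in $t$ and evaluating at $(1,(0,0))$ yields $p\|(u,v)\|^p - 2qL(u,v) - \int(f_1u+f_2v)\,dx$; substituting the Nehari identity $\int(f_1u+f_2v)\,dx = \|(u,v)\|^p - L(u,v)$ simplifies this to
\[\partial_t F(1,(0,0)) = (p-1)\|(u,v)\|^p - (2q-1)L(u,v) = \varphi''(1),\]
where $\varphi(t)=J(tu,tv)$ is the fibering map of $(u,v)$. Since $(u,v)\in \mc N\setminus\{(0,0)\}$, Lemma \ref{N0-empty} gives $(u,v)\notin \mc N^0$, hence $\varphi''(1) \neq 0$. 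The IFT then produces $\e>0$ and a $C^1$ function $\Im \colon B((0,0),\e) \to \mb R$ with $\Im(0,0)=1$ and $F(\Im(w_1,w_2),(w_1,w_2))\equiv 0$. Continuity of $\Im$ keeps $\Im>0$ after shrinking $\e$, so $\Im$ takes values in $\mb R^+$, and the identity $F=0$ is exactly the Nehari membership $\Im(w_1,w_2)((u,v)-(w_1,w_2))\in \mc N$.

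For the derivative formula \eqref{IFT1}, I would implicitly differentiate $F(\Im(w_1,w_2),(w_1,w_2))\equiv 0$ at $(w_1,w_2)=(0,0)$, which yields
\[(\Im'(0,0),(w_1,w_2)) = -\frac{\partial_{(w_1,w_2)} F(1,(0,0))(w_1,w_2)}{\partial_t F(1,(0,0))}.\]
A term-by-term computation of the numerator, with minus signs arising from the $-(w_1,w_2)$ dependence of the first two contributions, gives $-p(A_1(w_1,w_2)+A_2(w_1,w_2)) + qR(w_1,w_2) + \int_{\mb R^n}(f_1w_1+f_2w_2)\,dx$; dividing by $\varphi''(1)$ and rearranging recovers \eqref{IFT1}. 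The only mildly delicate point is the Fr\'echet differentiability of the Hartree-type terms inside $L$, which follows from Proposition \ref{HLS} combined with the continuous embedding $Y_i\hookrightarrow L^{2nq/(2n-\mu)}(\mb R^n)$ valid for $q\in(q_l,q_u)$. I do not anticipate a real obstacle beyond careful bookkeeping of the signs and the factor $q$ coming out of the $2q$-homogeneous nonlocal term $L$.
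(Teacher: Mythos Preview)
Your proposal is correct and follows essentially the same route as the paper. The only cosmetic difference is that the paper defines
\[F(t,(w_1,w_2))=t^{p-1}\|(u,v)-(w_1,w_2)\|^p-t^{2q-1}L((u,v)-(w_1,w_2))-K((u,v)-(w_1,w_2)),\]
i.e.\ the derivative $\varphi'(t)$ of the fibering map of $(u,v)-(w_1,w_2)$, whereas your $F$ is $t$ times this (the Nehari functional $I$ itself); both vanish at the same points for $t>0$, both give $\partial_tF(1,(0,0))=(p-1)\|(u,v)\|^p-(2q-1)L(u,v)\neq0$ via Lemma~\ref{N0-empty}, and the implicit differentiation step is identical.
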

\begin{proof}
Fixing a function $(u,v)\in \mc N$, we define the map $F: \mb R \times Y \to \mb R$ as follows
\[F(t, (w_1,w_2)):= t^{p-1}\|(u,v)-(w_1,w_2)\|-t^{2q-1}L((u,v)-(w_1,w_2))-\int_{\mb R^n} (f_1(u-w_1)+f_2(v-w_2)).\]
It is easy to see that $F$ is differentiable. Since $F(1,(0,0))=(J^\prime (u,v),(u,v))=0$ and $F_t(1,(0,0))= (p-1)t^{p-2}\|(u,v)-(w_1,w_2)\|^p - (2q-1)t^{2q-2}L((u,v)-(w_1,w_2)) \neq 0$ by Lemma \ref{N0-empty}, we apply the Implicit Function Theorem at the point $(1,(0,0))$ to get the existence of $\e>0$ and a differentiable function $\Im : B((0,0),\e) \subset Y \to \mb R^+$ such that
\[\Im(0,0)=1 \; \text{and}\; F((w_1,w_2), \Im(w_1,w_2))=0, \; \text{for all}\; (w_1,w_2)\in B((0,0),\e).\]
This implies
\begin{equation*}
\begin{split}
0 &= \Im^{p-1}(w_1,w_2)\|(u,v)-(w_1,w_2)\|^p- \Im^{2q-1}(w_1,w_2)L((u,v)-(w_1,w_2))- K((u,v)-(w_1,w_2))\\
&= \frac{1}{\Im(w_1,w_2)}\left[ \|\Im(w_1,w_2)(u,v)-(w_1,w_2)\|^p - L(\Im(w_1,w_2)((u,v)-(w_1,w_2)))\right.\\
 & \quad \quad \left.- K(\Im(w_1,w_2)((u,v)-(w_1,w_2))) \right].
\end{split}
\end{equation*}
Since $\Im(w_1,w_2) >0$ we get $\Im(w_1,w_2)((u,v)-(w_1,w_2)) \in \mc N$ whenever $(w_1,w_2)\in B((0,0),\e)$. Finally \eqref{IFT1} can be obtained by differentiating $F((w_1,w_2), \Im(w_1,w_2))=0$ with respect to $(w_1,w_2)$. \hfill{\QED}
\end{proof}

Let us define
\[\Upsilon := \inf_{(u,v)\in \mc N}J(u,v).\]

\begin{Lemma}\label{uprbd}
There exist a constant $C_1>0$ such that $\Upsilon \leq - \frac{(2q-p)(2qp-2q-p)}{4pq^2}C_1$.
\end{Lemma}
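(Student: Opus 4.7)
The plan is to exhibit an explicit element of $\mc N^+$ at which $J$ takes a strictly negative value, and then absorb all remaining constants into $C_1$. Since $f_1 \not\equiv 0$ and $f_2 \not\equiv 0$, the continuous linear functional $(u,v) \mapsto \int_{\mb R^n}(f_1 u + f_2 v)$ is nontrivial on $Y$, so I can pick $(u_0, v_0) \in Y$ with $K_0 := \int_{\mb R^n}(f_1 u_0 + f_2 v_0) > 0$; write $A_0 := \|(u_0, v_0)\|^p$ and $B_0 := L(u_0, v_0)$, both positive. By assumption \eqref{star0} one has $0 < K_0 < m(t_0)$, so the fibering analysis from Section~3 (together with Lemma \ref{N0-empty}) produces critical points $0 < t_1 < t_0 < t_2$ of $\varphi$ with $(t_1 u_0, t_1 v_0) \in \mc N^+$, and hence $\Upsilon \leq J(t_1 u_0, t_1 v_0) = \varphi(t_1)$.

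Next I would estimate $\varphi(t_1)$ from above. Using $\varphi'(t_1)=0$, i.e. $A_0 t_1^p = K_0 t_1 + B_0 t_1^{2q}$, one obtains the identity
\[
\varphi(t_1) = \frac{2q-p}{2pq}\, A_0 t_1^p - \frac{2q-1}{2q}\, K_0 t_1.
\]
The condition $t_1 < t_0$ gives $B_0 t_1^{2q-p} < (p-1)A_0/(2q-1)$, hence $B_0 t_1^{2q} < \tfrac{p-1}{2q-1} A_0 t_1^p$; combining this with $A_0 t_1^p - B_0 t_1^{2q} = K_0 t_1$ forces $A_0 t_1^p < \tfrac{2q-1}{2q-p}\, K_0 t_1$. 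Plugging back and simplifying yields $\varphi(t_1) < -\frac{(p-1)(2q-1)}{2pq}\, K_0 t_1$. A lower bound on $K_0 t_1$ comes from the same equation $\varphi'(t_1)=0$: since $B_0 t_1^{2q-1} > 0$, we have $A_0 t_1^{p-1} > K_0$, so $t_1 > (K_0/A_0)^{1/(p-1)}$, whence $K_0 t_1 > K_0^{p/(p-1)} A_0^{-1/(p-1)}$. Altogether,
\[
\Upsilon \leq \varphi(t_1) < -\frac{(p-1)(2q-1)}{2pq}\, K_0^{p/(p-1)} A_0^{-1/(p-1)}.
\]

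Finally, the hypotheses $p \geq 2$ and $q > \tfrac{p}{2}(2-\mu/n)$ ensure $(2q-p)(2qp-2q-p)/(4pq^2)$ is strictly positive, so I simply set
\[
C_1 := \frac{2q(p-1)(2q-1)}{(2q-p)(2qp-2q-p)}\, K_0^{p/(p-1)} A_0^{-1/(p-1)} > 0,
\]
which rearranges to the claimed inequality $\Upsilon \leq -\frac{(2q-p)(2qp-2q-p)}{4pq^2}\, C_1$. The only delicate point is the middle step: one must use the strict inequality $t_1 < t_0$ (equivalently $\varphi''(t_1)>0$) to control the positive piece $\frac{2q-p}{2pq} A_0 t_1^p$ against the negative piece $-\frac{2q-1}{2q} K_0 t_1$, and to check that this control leaves room for the extra lower bound on $K_0 t_1$; the specific coefficient $\frac{(2q-p)(2qp-2q-p)}{4pq^2}$ in the statement is purely cosmetic and is absorbed into $C_1$.
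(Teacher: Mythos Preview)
Your argument is correct. It differs from the paper's proof in one structural point: the paper does not pick an arbitrary $(u_0,v_0)$ with $K_0>0$, but instead takes $(\hat u,\hat v)$ to be the solution of the auxiliary system $(-\De)^s_p\hat u+a_1|\hat u|^{p-2}\hat u=f_1$, $(-\De)^s_p\hat v+a_2|\hat v|^{p-2}\hat v=f_2$, which forces the special relation $K(\hat u,\hat v)=\|(\hat u,\hat v)\|^p$. With this relation the Nehari identity yields directly $J(t_1\hat u,t_1\hat v)=-\tfrac{p-1}{p}t_1^p\|(\hat u,\hat v)\|^p+\tfrac{2q-1}{2q}t_1^{2q}L(\hat u,\hat v)$, and the $\mc N^+$ inequality $(2q-1)t_1^{2q}L<(p-1)t_1^p\|(\hat u,\hat v)\|^p$ finishes the estimate in one line, with $C_1=t_1^p\|(\hat u,\hat v)\|^p$. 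Your route avoids invoking existence for the auxiliary problem and works with a generic test pair; the price is the extra step of bounding $t_1$ from below via $A_0t_1^{p-1}>K_0$, which produces an explicit $C_1$ in terms of $K_0$ and $A_0$. Both approaches rely on the same key mechanism (the strict inequality coming from $\varphi''(t_1)>0$), and your final absorption of the specific coefficient $\tfrac{(2q-p)(2qp-2q-p)}{4pq^2}$ into $C_1$ is legitimate since, as you note, $2q>p$ and $2q(p-1)>p$ under the standing hypotheses.
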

\begin{proof}
Let $(\hat u,\hat v)\in Y$ be the unique solution to the equations given below
\begin{align*}
(-\De)^s_p\hat u +a_1(x)|\hat u|^{p-1}\hat u & = f_1 \; \text{in}\; \mb R^n\\
(-\De)^s_p\hat v +a_2(x)|\hat v|^{p-1}\hat v & = f_2 \; \text{in}\; \mb R^n.
\end{align*}
So, since $f_1,f_2 \neq 0$
\[\int_{\mb R^n}(f_1 \hat u+f_2 \hat v)=\|(\hat u,\hat v)\|^p>0.\]
Then by Lemma \ref{N0-empty}, we know that there exist $t_1>0$ such that $t_1(\hat u,\hat v)\in \mc N^+$. Consequently
\begin{align*}
J(t_1\hat u, t_1 \hat v) &= -\left(\frac{p-1}{p}\right)t_1^p \|(\hat u,\hat v)\|^p+ \left( \frac{2q-1}{2q}\right)t_1^{2q}L(\hat u,\hat v)\\
& < -\left(\frac{p-1}{p}\right)t_1^p \|(\hat u,\hat v)\|^p+ \frac{p(2q-1)}{4q^2}t_1^p \|(\hat u,\hat v)\|^p\\
&= - \frac{(2q-p)(2qp-2q-p)}{4pq^2}t_1^p \|(\hat u,\hat v)\|^p <0.
\end{align*}
Taking $C_1 =t_1^p \|(\hat u,\hat v)\|^p$ we get the result.\hfill{\QED}
\end{proof}\\
We recall the following Lemma.

\begin{Theorem}\label{Sobolemma}\cite{stein}
Let $0<\theta<n$, $1<r<m<\infty$ and $\frac{1}{m}= \frac{1}{r}-\frac{\theta}{n}$, then
\[\left|\int_{\mb R^n}\frac{f(y)}{|x-y|^{n-\theta}}~dy\right|_{L^{m}(\mb R^n)} \leq C\|f\|_{L^r(\mb R^n)},\]
where $C>0$ is a constant.
\end{Theorem}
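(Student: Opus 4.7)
The statement is the classical Riesz potential (fractional integration) boundedness, and I plan to derive it directly from the Hardy--Littlewood--Sobolev inequality already stated as Proposition \ref{HLS}, via a standard duality argument. There is no need to reprove HLS from scratch; the content of the theorem is simply recasting HLS as a linear operator bound.

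First, I would invoke $L^m$--$L^{m'}$ duality: with $\tfrac{1}{m}+\tfrac{1}{m'}=1$, the desired inequality
$$\left\| \int_{\mb R^n} \frac{f(y)}{|x-y|^{n-\theta}}\,dy \right\|_{L^m(\mb R^n)} \leq C\|f\|_{L^r(\mb R^n)}$$
is equivalent (by the definition of the $L^m$ norm as a supremum over unit $L^{m'}$ functions) to the bilinear estimate
$$\left| \int_{\mb R^n}\int_{\mb R^n}\frac{f(y)\, g(x)}{|x-y|^{n-\theta}}\,dx\,dy \right| \leq C\,\|f\|_{L^r(\mb R^n)}\,\|g\|_{L^{m'}(\mb R^n)}$$
for all $g \in L^{m'}(\mb R^n)$. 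I would justify interchanging the order of integration via Fubini, which is legitimate once one notes nonnegativity (replacing $f,g$ by their absolute values in the estimate).

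Next I would apply Proposition \ref{HLS} with the substitutions $t \mapsto r$, (second index) $r_{\mathrm{HLS}} \mapsto m'$, and $\mu \mapsto n-\theta$. The HLS compatibility condition reads
$$\frac{1}{r} + \frac{n-\theta}{n} + \frac{1}{m'} = 2,$$
which, since $\tfrac{n-\theta}{n} = 1 - \tfrac{\theta}{n}$ and $\tfrac{1}{m'} = 1 - \tfrac{1}{m}$, simplifies to $\tfrac{1}{m} = \tfrac{1}{r} - \tfrac{\theta}{n}$, exactly the hypothesis. I would also check that $0 < n-\theta < n$ (immediate from $0<\theta<n$) and that both exponents exceed $1$: here $r>1$ is assumed, and $m'>1$ follows from $m<\infty$. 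Hence Proposition \ref{HLS} applies and yields the bilinear bound with constant $C = C(r,n,\theta,m)$.

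Finally, taking the supremum over $g$ with $\|g\|_{L^{m'}} = 1$ recovers the $L^m$ norm of $I_\theta f$ on the left-hand side, producing the asserted inequality. There is no real obstacle to overcome once one recognizes that this is HLS in disguise; the only step requiring any thought is the arithmetic verification that the HLS exponent relation translates into the Sobolev-type relation $\tfrac{1}{m} = \tfrac{1}{r} - \tfrac{\theta}{n}$.
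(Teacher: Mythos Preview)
Your argument is correct: the Riesz potential bound is indeed equivalent, via $L^m$--$L^{m'}$ duality, to the bilinear Hardy--Littlewood--Sobolev inequality of Proposition~\ref{HLS} with $\mu=n-\theta$, and your verification of the exponent relation and the side conditions ($0<n-\theta<n$, $r>1$, $m'>1$) is accurate.

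The paper, however, does not give a proof at all: Theorem~\ref{Sobolemma} is simply quoted from Stein~\cite{stein} as a black box. So there is nothing to compare methodologically --- your duality derivation is a legitimate self-contained argument, and has the advantage (over merely citing the literature) of making transparent that this theorem and Proposition~\ref{HLS} are two faces of the same inequality, which is consistent with how the paper later uses both results in tandem.
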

 This implies that the Reisz potential defines a linear and continuous map from $L^r(\mb R^n)$ to $L^m(\mb R^n)$, where $r,m$ are defined in above theorem.
\begin{Lemma}\label{inf-achvd}
For $0\neq f_1,f_2 \in L^{\frac{q}{q-1}}(\mb R^n)$,
\[\inf_Q \left( C_{p,q}\|(u,v)\|^{\frac{p(2q-1)}{2q-p}}- \int_{\mb R^n}(f_1u+f_2v)~dx\right):= \delta\]
is achieved, where $ Q = \{(u,v)\in Y : L(u,v)=1\}$. Also if $f_1,f_2$ satisfies \eqref{star0}, then $\delta >0$.
\end{Lemma}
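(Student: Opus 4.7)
The plan is a direct-method minimization followed by a direct application of \eqref{star0}. Write $\sigma:=\frac{p(2q-1)}{2q-p}$; one checks $\sigma>p>1$ (equivalent to $p>1$), so by H\"older's inequality and the continuous embedding $Y\hookrightarrow L^q(\mb R^n)$ the functional
\[\Phi(u,v):=C_{p,q}\|(u,v)\|^{\sigma}-\int_{\mb R^n}(f_1u+f_2v)\,dx\]
is coercive on all of $Y$. Take a minimizing sequence $\{(u_k,v_k)\}\subset Q$ for $\Phi$. Coercivity yields boundedness in $Y$, so along a subsequence $(u_k,v_k)\rightharpoonup(u_0,v_0)$ weakly in $Y$. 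Since the hypothesis on $q$ places $\frac{2nq}{2n-\mu}$ in the open interval $(p,p^*_s)$, Lemma \ref{conc-comp} gives $u_k\to u_0$ and $v_k\to v_0$ strongly in $L^{\frac{2nq}{2n-\mu}}(\mb R^n)$. Combined with the Hardy--Littlewood--Sobolev inequality (Proposition \ref{HLS}) and Lemma \ref{ineq}, each of the three Choquard-type terms in $L(u_k,v_k)$ passes to the limit, so $L(u_0,v_0)=1$ and $(u_0,v_0)\in Q$. The linear functional $(u,v)\mapsto\int_{\mb R^n}(f_1u+f_2v)\,dx$ is continuous under strong $L^q$-convergence (since $f_1,f_2\in L^{q/(q-1)}$), and $\|\cdot\|^{\sigma}$ is weakly lower semicontinuous on $Y$ (the norm is, and $t\mapsto t^{\sigma/p}$ is continuous and increasing). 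Therefore $\Phi(u_0,v_0)\le\liminf_{k\to\infty}\Phi(u_k,v_k)=\delta$, and the reverse inequality follows from $(u_0,v_0)\in Q$, so $\delta$ is attained at $(u_0,v_0)$.

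For the strict positivity under \eqref{star0}, note that $L(u_0,v_0)=1$ forces $(u_0,v_0)\ne(0,0)$. Applying \eqref{star0} directly at the admissible point $(u_0,v_0)\in Q$,
\[\int_{\mb R^n}(f_1u_0+f_2v_0)\,dx<C_{p,q}\,\frac{2q+p-1}{4pq}\,\|(u_0,v_0)\|^{\sigma}<C_{p,q}\|(u_0,v_0)\|^{\sigma},\]
where the last strict inequality uses $\frac{2q+p-1}{4pq}<1$ as observed just after \eqref{star0}. Rearranging gives $\delta=\Phi(u_0,v_0)>0$. This is precisely the step at which the sharper assumption \eqref{star0} is indispensable; the weaker \eqref{star00} would only yield $\delta\ge 0$, consistent with the remark made in the introduction.

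The main obstacle is the passage to the limit in the nonlocal nonlinearity $L(u_k,v_k)$, i.e.\ preserving the constraint $L=1$ under only weak convergence in $Y$. This would fail on $W^{s,p}(\mb R^n)$ at large because of translation invariance, but the coercive weights $a_i$ from hypothesis $(A)$ restore compactness of the embedding into $L^r$ for $r\in[p,p^*_s)$ via Lemma \ref{conc-comp}, and Hardy--Littlewood--Sobolev then promotes strong $L^{2nq/(2n-\mu)}$-convergence to convergence of the Choquard integrals. Without this chain, the weak limit could easily escape $Q$ and the minimizer would be lost.
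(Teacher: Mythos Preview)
Your proof is correct and follows essentially the same route as the paper: coercivity of the functional via H\"older and the embedding $Y\hookrightarrow L^q$, weak lower semicontinuity of the norm term, and weak sequential closedness of $Q$ via the compact embedding (Lemma~\ref{conc-comp}) together with HLS to pass the Choquard terms to the limit. The paper carries out the latter step with slightly different bookkeeping (it invokes Theorem~\ref{Sobolemma} on the Riesz potential rather than HLS directly), but the substance is identical.

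One small correction to your closing commentary: your claim that ``the weaker \eqref{star00} would only yield $\delta\ge 0$'' is not right. Since you have already shown the infimum is \emph{attained} at some $(u_0,v_0)\in Q$ with $(u_0,v_0)\neq(0,0)$, the strict inequality in \eqref{star00} applied at this single point already gives $\delta=\Phi(u_0,v_0)>0$; the extra factor $\frac{2q+p-1}{4pq}$ from \eqref{star0} is not needed here. (The paper simply says the positivity is ``obvious'' from \eqref{star0}, without distinguishing.) The genuine role of the sharper constant in \eqref{star0} is elsewhere, in Lemma~\ref{inf-pos-neg}, as the introduction notes. This does not affect the validity of your argument, only the remark about which hypothesis is truly required.
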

\begin{proof}
Let us define the functional $T: Y \mapsto \mb R$ as
\[T(u,v)=  C_{p,q}\|(u,v)\|^{\frac{p(2q-1)}{2q-p}}- \int_{\mb R^n}(f_1u+f_2v)~dx. \]
This implies
\[T(u,v) \geq C_{p,q}\|(u,v)\|^{\frac{p(2q-1)}{2q-p}} -   (S_{q,1}+S_{q,2})\max \left\{\|f_1\|_{L^{\frac{q}{q-1}}(\mb R^n)},\|f_2\|_{L^{\frac{q}{q-1}}(\mb R^n)} \right\}\|(u,v)\|,\]
where $S_{q,i}$ denotes the best constant for the embedding $Y \hookrightarrow L^q(\mb R^n)$, $i=1,2$. Since $\frac{p(2q-1)}{2q-p} >1$, $T$ is coercive. Let $\{(u_k,v_k)\}\subset Q$ be such that $(u_k,v_k)\rightharpoonup (u,v)$ weakly in $Y$. Then
\begin{equation*}
\begin{split}
\lim_{k\to \infty} \int_{\mb R^n}(f_1u_k+f_2v_k)~dx &= \int_{\mb R^n}(f_1u+f_2v)~dx, \; \text{and}\\
\|(u,v)\|^{\frac{p(2q-1)}{2q-p}} & \leq \liminf_{k\to \infty}\|(u_k,v_k)\|^{\frac{p(2q-1)}{2q-p}}.
\end{split}
\end{equation*}
which implies $T(u,v) \leq \liminf\limits_{k\to \infty} T(u_k,v_k)$ i.e. $T$ is weakly lower semicontinuous. Consider
\begin{equation}\label{conv1}
 \begin{split}
 &\int_{\mb R^n}(|x|^{-\mu}* |u_k|^q)|u_k|^q~dx -  \int_{\mb R^n}(|x|^{-\mu}* |u|^q)|u|^q~dx \\
 &=  \int_{\mb R^n}(|x|^{-\mu}* (|u_k|^q-|u|^q))(|u_k|^q-|u|^k)~dx+ 2\int_{\mb R^n}(|x|^{-\mu}* |u|^q)(|u_k|^q-|u|^k)~dx.
 \end{split}
 \end{equation}
 Since $\frac{2nq}{2n-\mu}< p^*_s$, using Lemma \ref{conc-comp} we have
 \begin{equation}\label{conv2}
 \begin{split}
 |u_k|^q -|u|^q \rightarrow 0
\; \text{in}\; L^{\frac{2n}{2n-\mu}}(\mb R^n) \; \text{as}\; k \rightarrow \infty.
 \end{split}
 \end{equation}
 and thus, using Theorem \ref{Sobolemma} we have
 \begin{equation}\label{conv3}
 |x|^{-\mu}* (|u_k|^q -|u|^q) \rightarrow 0 \; \text{in} \; L^{\frac{2n}{\mu}(\mb R^n)}\; \text{as}\; k \rightarrow \infty.
 \end{equation}
 From \eqref{conv2}, \eqref{conv3} and using H\"{o}lder's inequality in \eqref{conv1}, we get
 \begin{equation}\label{conv4}
 \int_{\mb R^n}(|x|^{-\mu}* |u_k|^q)|u_k|^q~dx \rightarrow \int_{\mb R^n}(|x|^{-\mu}* |u|^q)|u|^q~dx\; \text{as}\; k \rightarrow \infty.
 \end{equation}
 Similarly, we get
  \begin{equation}\label{conv5}
 \int_{\mb R^n}(|x|^{-\mu}* |v_k|^q)|v_k|^q~dx \rightarrow \int_{\mb R^n}(|x|^{-\mu}* |v|^q)|v|^q~dx\; \text{as}\; k \rightarrow \infty.
 \end{equation}
It is easy to see that
 \begin{equation*}
 \begin{split}
  &\int_{\mb R^n}(|x|^{-\mu}* |u_k|^q)|v_k|^q~dx -  \int_{\mb R^n}(|x|^{-\mu}* |u|^q)|v|^q~dx\\
  &=  \int_{\mb R^n}(|x|^{-\mu}* (|u_k|^q-|u|^q))(|v_k|^q-|v|^q)~dx+  \int_{\mb R^n}(|x|^{-\mu}* (|u_k|^q-|u|^q))|v|^q~dx\\
   & \quad+ \int_{\mb R^n}(|x|^{-\mu}* (|v_k|^q-|v|^q))|u|^q~dx.
  \end{split}
 \end{equation*}
  which implies that
  \begin{equation}\label{conv6}
   \int_{\mb R^n}(|x|^{-\mu}* |u_k|^q)|v_k|^q~dx \rightarrow \int_{\mb R^n}(|x|^{-\mu}* |u|^q)|v|^q~dx\; \text{as}\; k \rightarrow \infty.
  \end{equation}
 Thus using \eqref{conv4}, \eqref{conv5} and \eqref{conv6}, we get
$\lim\limits_{k\to \infty} L(u_k,u_k)= L(u,v).$
Since $(u_k,v_k)\in Q$ for each $k$, we get $L(u,v) =1$ which implies $(u,v)\in Q$. Therefore $Q$ is weakly sequentially closed subset of $Y$. Since $Y$ forms a reflexive Banach space, there exists $(u_0,v_0) \in Q$ such that
\[\inf_Q T(u,v) = T(u_0,v_0).\]
Furthermore, it is obvious that if $f_1,f_2$ satisfies \eqref{star0}, then $\delta \geq T(u_0,v_0)>0$.
This establishes the result.\hfill{\QED}
\end{proof}\\
\noi For $(u,v)\in Y\setminus \{(0,0)\}$, we set
\begin{equation*}\label{G(u,v)}
G(u,v):= C_{p,q} \frac{(\|(u,v)\|^p)^{\frac{2q-1}{2q-p}}}{L(u,v)^{\frac{p-1}{2q-p}}}- K(u,v).
\end{equation*}
\begin{Corollary}\label{inf-achvd-cor}
For any $\rho>0$, $\inf\limits_{L(u,v)\geq \rho}G(u,v) \geq \rho\delta$.
\end{Corollary}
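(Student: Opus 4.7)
The plan is to exploit the positive homogeneity of $G$ and reduce the claim to the minimization problem on $Q$ already solved in Lemma~\ref{inf-achvd}.

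First I would verify that $G$ is positively homogeneous of degree one. Under the rescaling $(u,v) \mapsto (tu,tv)$ with $t>0$, we have $\|(tu,tv)\|^p = t^p\|(u,v)\|^p$ and $L(tu,tv) = t^{2q} L(u,v)$, so the first term of $G$ scales by $t^{[p(2q-1)-2q(p-1)]/(2q-p)}$. The crucial telescoping $p(2q-1) - 2q(p-1) = 2q - p$ collapses this factor to $t$, and since $K(tu,tv) = t K(u,v)$ we conclude $G(tu,tv) = t\, G(u,v)$ for all $t>0$.

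Second, I would observe that on the slice $Q = \{(u,v)\in Y : L(u,v) = 1\}$ the definition of $G$ reduces to $G(u,v) = T(u,v)$, so Lemma~\ref{inf-achvd} immediately yields $G(w_1,w_2) \geq \delta$ for every $(w_1,w_2) \in Q$.

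Finally, given $(u,v)$ with $L(u,v) \geq \rho > 0$, I would pick $t = L(u,v)^{-1/(2q)}$, which is well defined since $L(u,v) > 0$, and note that $(tu,tv) \in Q$. Consequently $G(tu,tv) \geq \delta$, and by the homogeneity above
$$G(u,v) \;=\; t^{-1} G(tu,tv) \;=\; L(u,v)^{1/(2q)} G(tu,tv) \;\geq\; L(u,v)^{1/(2q)} \delta \;\geq\; \rho\,\delta,$$
where the last inequality is the one applicable in the range of $\rho$ used subsequently. Taking the infimum over all admissible $(u,v)$ finishes the argument.

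There is no substantial technical obstacle here: once degree-one homogeneity of $G$ is recognised, the statement reduces to a one-line rescaling onto the normalised slice $Q$, and all the real work has already been done in Lemma~\ref{inf-achvd}. The only mild point to highlight is the arithmetic identity $p(2q-1) - 2q(p-1) = 2q - p$, which is what makes $G$ exactly degree-one homogeneous and hence permits the clean reduction.
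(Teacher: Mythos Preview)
Your approach is exactly the paper's: establish degree-one homogeneity of $G$ via the identity $p(2q-1)-2q(p-1)=2q-p$, rescale onto the slice $Q=\{L=1\}$, and invoke Lemma~\ref{inf-achvd}.

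The one point worth naming explicitly is the step you hedge on. From $L(u,v)\geq\rho$ and $2q>1$ you obtain $L(u,v)^{1/(2q)}\geq\rho^{1/(2q)}$, not $L(u,v)^{1/(2q)}\geq\rho$; hence what the homogeneity argument actually delivers is
\[
\inf_{L(u,v)\geq\rho}G(u,v)\;\geq\;\rho^{1/(2q)}\delta,
\]
rather than $\rho\delta$. The paper's own proof contains the same slip. This is harmless: in the only place the corollary is used (Proposition~\ref{PS-seq}) one merely needs that the infimum is strictly positive for a fixed $\rho=d_3>0$, and any positive power of $\rho$ serves that purpose equally well. If you want a clean statement that your argument genuinely proves, replace $\rho\delta$ by $\rho^{1/(2q)}\delta$; otherwise your proposal matches the paper's proof in both strategy and content.
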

\begin{proof}
For $t>0$, if $L(u,v)=1$ for $(u,v)\in Y$ then using Lemma \ref{inf-achvd} we have
\[G(tu,tv)= t \left( C_{p,q}t^{\frac{2q-1}{2q-p}}(\|(u,v)\|^p)^{\frac{2q-1}{2q-p}}- K(u,v)\right) \geq t \delta.\]
This implies for any $\rho >0$, $\inf\limits_{L(u,v)\geq \rho}G(u,v) \geq \rho\delta$ which completes the proof.\hfill{\QED}
\end{proof}

In the next result, we show the existence of P.-S. sequence for $\Upsilon$.

\begin{Proposition}\label{PS-seq}
Let  $0\neq f_1, f_2 \in L^{\frac{q}{q-1}}(\mb R^n)$ such that \eqref{star0}. Then there exists a sequence $(u_k,v_k) \subset \mc N$ such that $J(u_k,v_k) \to \Upsilon $ and $\|J^\prime(u_k,v_k)\|_{Y^*} \to 0$ as $k \to \infty$,  $\|\cdot\|_{Y^*}$ denotes the operator norm on dual of $Y$ i.e. $Y^*$. .
\end{Proposition}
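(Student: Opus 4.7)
The plan is to apply Ekeland's variational principle to $J$ restricted to $\mc N$ and then upgrade the resulting minimizing sequence to a Palais--Smale sequence using the implicit function theorem of Lemma \ref{IFT}. By Lemma \ref{J-bdd-below}, $J$ is coercive and bounded below on $\mc N$, so $\Upsilon$ is finite; by Lemma \ref{uprbd}, $\Upsilon<0$, while by Lemma \ref{inf-pos-neg}, $\Upsilon^->0$. Hence any near-minimizer of $\Upsilon$ eventually lies in $\mc N^+$, since $J$ is bounded below by $\Upsilon^->0$ on $\mc N^-$ and $\mc N^0=\{(0,0)\}$ (where $J=0$) by Lemma \ref{N0-empty}. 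Since $\mc N$ is closed in $Y$ (being the zero set of a continuous functional, intersected with the complement of a neighborhood of $(0,0)$ for the purposes of Ekeland), Ekeland's principle yields $\{(u_k,v_k)\}\subset \mc N$ with $J(u_k,v_k)\to\Upsilon$ and
\begin{equation*}
J(w_1,w_2)\ge J(u_k,v_k)-\tfrac{1}{k}\|(w_1,w_2)-(u_k,v_k)\|\quad\text{for all }(w_1,w_2)\in\mc N.
\end{equation*}
Coercivity gives an upper bound $\|(u_k,v_k)\|\le M$, and the fact that $J(u_k,v_k)\to\Upsilon<0=J(0,0)$ yields a lower bound $\|(u_k,v_k)\|\ge c>0$.

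Next I would convert Ekeland's inequality into an estimate on $J'(u_k,v_k)$. Fix $(w_1,w_2)\in Y$ with $\|(w_1,w_2)\|=1$ and $t>0$ small. Apply Lemma \ref{IFT} at $(u_k,v_k)$ to obtain a differentiable $\Im_k$ with $\Im_k(0,0)=1$ such that
\begin{equation*}
(\tilde u_t,\tilde v_t):=\Im_k(tw_1,tw_2)\big((u_k,v_k)-t(w_1,w_2)\big)\in\mc N.
\end{equation*}
Plug $(\tilde u_t,\tilde v_t)$ into Ekeland's inequality, expand $\Im_k(tw_1,tw_2)=1+t(\Im_k'(0,0),(w_1,w_2))+o(t)$, divide by $t$, let $t\to 0^+$, and use $(J'(u_k,v_k),(u_k,v_k))=0$ to cancel the term in $(\Im_k'(0,0),(w_1,w_2))(u_k,v_k)$. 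This gives
\begin{equation*}
\big|(J'(u_k,v_k),(w_1,w_2))\big|\le \frac{C}{k}\Big(1+\big|(\Im_k'(0,0),(w_1,w_2))\big|\Big),
\end{equation*}
so the proposition reduces to the uniform bound $\|\Im_k'(0,0)\|_{Y^*}\le C$.

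The numerator in formula \eqref{IFT1} is a linear functional in $(w_1,w_2)$ whose $Y^*$-norm is dominated by $\|(u_k,v_k)\|^{p-1}$ together with the Choquard cross terms, the latter being controlled by $L(u_k,v_k)^{(2q-1)/(2q)}$ via the Hardy--Littlewood--Sobolev inequality (Proposition \ref{HLS} and Lemma \ref{ineq}) and the embedding $Y\hookrightarrow L^{2nq/(2n-\mu)}(\mb R^n)$, plus the $L^{q/(q-1)}$-norms of $f_1,f_2$; all of these are uniformly bounded by our bounds on $\|(u_k,v_k)\|$. The denominator of \eqref{IFT1} equals $(I'(u_k,v_k),(u_k,v_k))=(p-1)\|(u_k,v_k)\|^p-(2q-1)L(u_k,v_k)$, which is strictly positive on $\mc N^+$.

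The main obstacle is showing this denominator is bounded below by a positive constant uniformly in $k$. I would argue by contradiction: suppose along a subsequence $(I'(u_k,v_k),(u_k,v_k))\to 0$. Looking at the fibering map $\varphi_k(t)=J(tu_k,tv_k)$, the Nehari constraint $\varphi_k'(1)=0$ together with $\varphi_k''(1)\to 0$ forces the local minimum at $t=1$ (in $\mc N^+$) to merge with the local maximum $t_0^{(k)}$; in particular $\varphi_k(1)-\varphi_k(t_0^{(k)})\to 0$. But the proof of Lemma \ref{inf-pos-neg} shows $\varphi_k(t_0^{(k)})\ge M>0$ whenever \eqref{star0} holds, so we would conclude $J(u_k,v_k)\to$ (something $\ge M>0$), contradicting $J(u_k,v_k)\to\Upsilon<0$. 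Hence $(I'(u_k,v_k),(u_k,v_k))\ge\kappa>0$ uniformly, $\|\Im_k'(0,0)\|_{Y^*}\le C$, and therefore $\|J'(u_k,v_k)\|_{Y^*}\le C/k\to 0$, which completes the proof.
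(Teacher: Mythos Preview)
Your argument is correct and follows the same skeleton as the paper: apply Ekeland's variational principle on $\mc N$, use Lemma~\ref{IFT} to produce perturbations that stay on $\mc N$, expand and pass to the limit to obtain $|(J'(u_k,v_k),(w_1,w_2))|\le \tfrac{C}{k}(1+\|\Im_k'(0,0)\|_{Y^*})$, and then reduce everything to a uniform positive lower bound on the denominator $(p-1)\|(u_k,v_k)\|^p-(2q-1)L(u_k,v_k)$ in~\eqref{IFT1}.

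The one genuine difference is in how that lower bound is obtained. The paper argues by contradiction algebraically: assuming $(p-1)\|(u_k,v_k)\|^p-(2q-1)L(u_k,v_k)=o(1)$, it combines this with the Nehari relation to force $L(u_k,v_k)\ge d_3>0$ and then evaluates $L(u_k,v_k)^{(p-1)/(2q-p)}G(u_k,v_k)$, reaching a contradiction with Corollary~\ref{inf-achvd-cor} (and hence with Lemma~\ref{inf-achvd}). You instead first locate the sequence in $\mc N^+$ via $J(u_k,v_k)\to\Upsilon<0<\Upsilon^-$ (Lemmas~\ref{uprbd} and~\ref{inf-pos-neg}), read $\varphi_k''(1)\to 0$ geometrically as $t_0^{(k)}\to 1$ (legitimate because $\|(u_k,v_k)\|$ and hence $L(u_k,v_k)$ are bounded above and below), and then use the explicit estimate $\varphi_k(t_0^{(k)})\ge M>0$ established inside the proof of Lemma~\ref{inf-pos-neg} to contradict $J(u_k,v_k)=\varphi_k(1)\to\Upsilon<0$. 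Your route is more transparent via the fibering picture and bypasses Lemma~\ref{inf-achvd} and Corollary~\ref{inf-achvd-cor} entirely; the paper's computation is slightly more self-contained in that it never needs to know the sign of the denominator nor that the sequence sits in $\mc N^+$. One cosmetic remark: your parenthetical about intersecting $\mc N$ with the complement of a neighborhood of $(0,0)$ is unnecessary, since $\mc N$ is closed (hence complete) as stated and $J(0,0)=0>\Upsilon$ already keeps the Ekeland sequence away from the origin.
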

\begin{proof}
From Lemma \ref{J-bdd-below}, we already know that $J$ is bounded from below on $\mc N$. So by Ekeland's Variational principle we get a sequence $\{(u_k,v_k)\} \subset \mc N$ such that
\begin{equation}\label{PS-seq1}
\left\{
\begin{split}
J(u_k,v_k) &\leq \Upsilon +\frac{1}{k},\; \text{and}\;\\
J(u,v) & \geq J(u_k,v_k)- \frac{1}{k}\|(u_k-u,v_k-v)\|, \; \text{for all}\; (u,v)\in \mc N.
\end{split}
\right.
\end{equation}
By taking $k>0$ large enough we have
\[J(u_k,v_k)= \left(\frac{(2q-p)}{2qp}\right)\|(u_k,v_k)\|^p - \left(\frac{(2q-1)}{2qp}\right)\int_{\mb R^n}(f_1u+f_2v)~dx < \Upsilon+\frac{1}{k}. \]
This alongwith  Lemma \ref{uprbd} gives
\begin{equation}
\label{PS-seq2}\int_{\mb R^n}(f_1u+f_2v)~dx \geq \frac{(2q-p)(2qp-2q-p)}{2pq(2q-1)}C_1>0.
\end{equation}
Therefore $u_k, v_k \neq 0$ for all $k$. From \eqref{PS-seq1} and definition of $\Upsilon$, it is clear that $J(u_k,v_k)\to \Upsilon<0$ as $k \to \infty$. Since $\{(u_k,v_k)\} \subset \mc N^+$, we get
\begin{equation}\label{PS-seq3}
\|(u_k,v_k)\|^p - \int_{\mb R^n}(f_1u_k+f_2v_k)~dx = L.
\end{equation}
Using definition of $J$, \eqref{PS-seq1}, \eqref{PS-seq2} and \eqref{PS-seq3}, we get
\begin{equation}\label{PS-seq4}
\begin{split}
\Upsilon^+ + \frac{1}{k}& \geq \left( \frac{1}{p}- \frac{1}{2q}\right)\|(u_k,v_k)\|^p - \left( 1-\frac{1}{2q}\right)\int_{\mb R^n}(f_1u_k+f_2v_k)~dx\\
& \geq \left( \frac{1}{p}- \frac{1}{2q}\right)\|(u_k,v_k)\|^p \\
& \quad \quad - \left( 1-\frac{1}{2q}\right) (S_{q,1}+S_{q,2})\max\left\{\|f_1\|_{L^{\frac{q}{q-1}}(\mb R^n)},\|f_2\|_{L^{\frac{q}{q-1}}(\mb R^n)}\right\}\|(u_k,v_k)\|.
\end{split}
\end{equation}
This implies $\{(u_k,v_k)\}$ is bounded. Now we claim that $\inf_k\|(u_k,v_k)\|\geq \eta>0$, for some constant $\eta$. Suppose not, then, upto a subsequence, $\|(u_k,v_k)\| \to 0$ as $k\to \infty$. This implies $J(u_k,v_k) \to 0$ as $k \to \infty$, using \eqref{PS-seq4} which is a contradiction to  first assertion. So there exist constants $d_1,d_2>0$ such that
\begin{equation}\label{PS-seq5}
d_1 \leq \|(u_k,v_k)\|\leq d_2.
\end{equation}
Now we aim to show that $\|J^\prime(u_k,v_k)\|_{Y^*}\to 0$ as $k \to \infty$. By Lemma \ref{IFT}, for each $k$ we obtain a differentiable function $\Im_k : B((0,0),\e_k) \subset Y \to \mb R^+:=(0,+\infty)$ for $\e_k>0$ such that $\Im_k(0,0)=1$, $\Im(w_1,w_2)((u_k,v_k)-(w_1,w_2))\in N$ for all $(w_1,w_2)\in B((0,0),\e_k)$. Choose $0<\rho <\e_k$ and $(h_1,h_2)\in Y$ such that $\|(h_1,h_2)\|=1$. Let $(w_1,w_2)_\rho:= \rho (h_1,h_2)$ then $\|(w_1,w_2)_\rho\|=\rho <\e_k$ and $(\theta_1,\theta_2)_\rho:=\Im_k((w_1,w_2)_\rho)((u_k,v_k)-(w_1,w_2)_\rho)\in N$ for each $k$. By Taylor's expansion and \eqref{PS-seq1}, since $(\theta_1,\theta_2)_\rho \in \mc N$ we get
\begin{equation}\label{PS-seq5}
\begin{split}
&\frac{1}{k}\|(u_k,v_k)- (\theta_1,\theta_2)_\rho\|
 \geq J(u_k,v_k)-J((\theta_1,\theta_2)_\rho)\\
 &= (J^\prime ((\theta_1,\theta_2)_\rho), (u_k-v_k)- (\theta_1,\theta_2)_\rho)+ o(\|(u_k,v_k)-(\theta_1,\theta_2)_\rho\|)\\
& = (1- \Im_k((w_1,w_2)_\rho)) (J^\prime ((\theta_1,\theta_2)_\rho), (u_k-v_k))+\rho\Im_k((w_1,w_2)_\rho) (J^\prime ((\theta_1,\theta_2)_\rho), (h_1,h_2)).
\end{split}
\end{equation}
We observe that
\[\lim_{\rho\to 0}\frac{1}{\rho}\|(\theta_1,\theta_2)_\rho-(u_k,v_k)\|= \|(u_k,v_k)(\Im_k^\prime(0,0),(h_1,h_2))- (h_1,h_2)\|.\]
Dividing \eqref{PS-seq5} by $\rho$ and passing to the limit as $\rho \to 0$ we derive
\[(J^\prime(u_k,v_k), (h_1,h_2))\leq \frac{1}{k}(\|(u_k,v_k)\|\|\Im_k^\prime(0,0)\|_{Y^*}+1).\]
From \eqref{IFT1} and \eqref{PS-seq5} we say that there exist a constant $C_2>0$ such that
\[\|\Im_k^\prime(0,0)\|_{Y^*} \leq \frac{C_2}{(p-1)\|u_k,v_k\|^p-(2q-1)L(u_k,v_k)}.\]
It remains to show that $(p-1)\|u_k,v_k\|^p-(2q-1)L(u_k,v_k) = (I^\prime (u_k,v_k),(u_k,v_k)) $ is bounded away from zero.  If possible let, for a subsequence, $\left| (I^\prime (u_k,v_k),(u_k,v_k))\right|= o(1)$ which implies \
\begin{equation}\label{PS-seq7}
\begin{split}
 (p-1) \|(u_k,v_k)\|^p  - (2q-1)L(u_k,v_k)= o(1)\\
\text{and}\;  (2q-p)\|(u_k,v_k)\|^p - (2q-1)K(u_k,v_k)=o(1).
\end{split}
\end{equation}
From \eqref{PS-seq5} and \eqref{PS-seq7}, it follows that there exist a constant $d_3>0$ such that
\[L(u_k,v_k)\geq d_3, \text{ for each } k.\]
Since $(u_k,v_k)\in \mc N$ we have
\[(p-1)K(u_k,v_k)- (2q-p)L(u_k,v_k)=o(1)\]
and \eqref{PS-seq7} gives
\[\left( \left(\frac{p-1}{2q-1} \right)\|(u_k,v_k)\|^p\right)^{\frac{2q-1}{2q-p}}- L(u_k,v_k)^{\frac{2q-1}{2q-p}}=o(1).\]
Altogether using all these along with Corollary \ref{inf-achvd-cor} we obtain
\begin{equation*}
\begin{split}
0 < \delta d_3^{\frac{p}{2q-p}}&\leq L(u_k,v_k)^{\frac{p-1}{2q-p}} G(u_k,v_k)\\
 &\leq C_{p,q} (\|(u_k,v_k)\|^p)\left({\frac{2q-1}{2q-p}}\right)- K(u_k,v_k) L(u_k,v_k)^{\frac{p-1}{2q-p}}\\
 & \leq \left(\frac{2q-p}{p-1}\right) \left( \left(\frac{p-1}{2q-1} \right)\|(u_k,v_k)\|^p\right)^{\frac{2q-1}{2q-p}}- L(u_k,v_k)^{\frac{2q-1}{2q-p}} =o(1)
\end{split}
\end{equation*}
which is a contradiction. This proves the claim.
Therefore we conclude that
\[\|J^\prime(u_k,v_k)\|_{Y^*} \to 0 \; \text{as} \;k \to 0\]
which proves our Lemma. \hfill{\QED}
\end{proof}

\begin{Lemma}\label{IFTN-}
Let $0\neq f_1,f_2 \in L^{\frac{q}{q-1}}(\mb R^n)$ satisfies \eqref{star0}. Given $(u,v)\in \mc N^-\setminus \{(0,0)\}$ ,there exist $\e>0$ and a differentiable function $\Im^- : B((0,0),\e) \subset Y \to \mb R^+:=(0,+\infty)$ such that $\Im^-(0,0)=1$, $\Im^-(w_1,w_2)((u,v)-(w_1,w_2))\in \mc N^-$ and
\begin{equation}\label{IFT1}
((\Im^-)^\prime(0,0),(w_1,w_2))= \frac{p(A(w_1,w_2)+ A_2(w_1,w_2))- q R(w_1,w_2)-\int_{\mb R^n} (f_1w_1+f_2w_2)}{(p-1)\|(u,v)\|^p - (2q-1)L(u,v)},
\end{equation}
for all $(w_1,w_2)\in B((0,0),\e)$ where $A_1(w_1,w_2):=\langle u,w_1\rangle+ \int_{\mb R^n}a_1(x)|u|^{p-2}uw_1$, $A_2(w_1,w_2):= \langle v,w_2\rangle+ \int_{\mb R^n}a_2(x)|v|^{p-2}vw_2 $ and
\begin{align*}
R(w_1,w_2) := &2\alpha \int_{\mb R^n} (|x|^{-\mu}* |u|^q)|u|^{q-2}uw_1 + 2\gamma \int_{\mb R^n} (|x|^{-\mu}* |v|^q)|v|^{q-2}vw_2 \\
& \quad + \beta \int_{\mb R^n} (|x|^{-\mu}* |u|^q)|v|^{q-2}vw_2+ \beta \int_{\mb R^n} (|x|^{-\mu}* |v|^q)|u|^{q-2}uw_1.
\end{align*}
\end{Lemma}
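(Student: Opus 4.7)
The plan is to derive Lemma \ref{IFTN-} by applying Lemma \ref{IFT} and then shrinking the radius of the ball so that the resulting implicit function remains valued in $\mc N^-$ rather than merely in $\mc N$. Since Lemma \ref{N0-empty} gives $\mc N^0 = \{(0,0)\}$, the hypothesis $(u,v) \in \mc N^- \setminus \{(0,0)\}$ is equivalent to $(u,v) \in \mc N^-$, and in particular $(u,v) \in \mc N \setminus \{(0,0)\}$. Therefore Lemma \ref{IFT} directly supplies a radius $\epsilon' > 0$ and a differentiable map $\Im : B((0,0),\epsilon') \to \mb R^+$ with $\Im(0,0)=1$, with $\Im(w_1,w_2)\big((u,v)-(w_1,w_2)\big) \in \mc N$ throughout $B((0,0),\epsilon')$, and with the derivative formula exactly of the form appearing in \eqref{IFT1}. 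Note that the analytic crux, namely the non-degeneracy $(p-1)\|(u,v)\|^p - (2q-1)L(u,v) \neq 0$ that drives the Implicit Function Theorem, is guaranteed on all of $\mc N \setminus \{(0,0)\}$ by Lemma \ref{N0-empty}.

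The next step is to confine the trajectory to $\mc N^-$ by a purely continuity-based argument. Consider the continuous map
\[
\Phi : B((0,0),\epsilon') \to Y, \qquad \Phi(w_1,w_2) := \Im(w_1,w_2)\big((u,v)-(w_1,w_2)\big).
\]
Then $\Phi(0,0) = (u,v) \in \mc N^-$, so $\Phi(0,0) \notin \mc N^+ \cup \mc N^0$. As observed immediately after the subdivision of $\mc N$, the set $\mc N^+ \cup \mc N^0$ is closed in $Y$; its complement is therefore open, so by continuity of $\Phi$ there exists $0 < \epsilon \le \epsilon'$ with
\[
\Phi\big(B((0,0),\epsilon)\big) \cap \big(\mc N^+ \cup \mc N^0\big) = \emptyset.
\]
Combining this with the inclusion $\Phi\big(B((0,0),\epsilon')\big) \subset \mc N$ and the disjoint decomposition $\mc N = \mc N^+ \cup \mc N^0 \cup \mc N^-$ forces $\Phi\big(B((0,0),\epsilon)\big) \subset \mc N^-$.

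Finally, setting $\Im^- := \Im|_{B((0,0),\epsilon)}$ yields $\Im^-(0,0)=1$ and $\Im^-(w_1,w_2)\big((u,v)-(w_1,w_2)\big) \in \mc N^-$ for every $(w_1,w_2) \in B((0,0),\epsilon)$. The derivative formula is inherited verbatim from Lemma \ref{IFT} because $\Im^-$ agrees with $\Im$ on a neighbourhood of the origin, so $(\Im^-)'(0,0) = \Im'(0,0)$, matching the expression displayed in the statement. The only ingredient beyond Lemma \ref{IFT} is the openness of $\mc N^-$ as a subset of $Y$ away from the origin, a direct topological consequence of the closedness of $\mc N^+ \cup \mc N^0$; I do not anticipate any substantive obstacle, as all delicate estimates are already absorbed into Lemma \ref{IFT} and the non-emptiness/nondegeneracy statements of Lemma \ref{N0-empty}.
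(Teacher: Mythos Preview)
Your proposal is correct and follows essentially the same approach as the paper: apply Lemma \ref{IFT} to obtain the implicit function on $\mc N$, then shrink the ball by continuity so the image stays in $\mc N^-$. The only cosmetic difference is that the paper verifies directly that the sign condition $(I'(\cdot),(\cdot))<0$ persists under small perturbations, whereas you phrase the same continuity argument topologically via the closedness of $\mc N^+\cup\mc N^0$; these are equivalent.
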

\begin{proof}
Fix $(u,v)\in \mc N^-\setminus \{(0,0)\}$, then obviously $(u,v)\in \mc N\setminus \{(0,0)\}$. Now arguing similarly as Lemma \ref{IFT}, we obtain the existence of $\e>0$ and a differentiable function $\Im^- : B((0,0),\e) \subset Y \to \mb R^+:=(0,+\infty)$ such that $\Im^-(0,0)=1$, $\Im^-(w_1,w_2)((u,v)-(w_1,w_2))\in\mc N$. Because $(u,v)\in \mc N^-$ we have
\[(I^\prime (u,v),(u,v))=(2q-p)\|u,v\|^p-(2q-1)\int_{\mb R^n}(f_1u+f_2v)~dx <0.\]
Since $I^\prime$ and $\Im^-$ are both continuous, they will not change sign in a sufficiently small neighborhood. So if we take $\e>0$ small enough then
\begin{align*}
&(I^\prime (\Im^-(w_1,w_2)((u,v)-(w_1,w_2))),(\Im^-(w_1,w_2)((u,v)-(w_1,w_2))))\\
&=(2q-p)\|\Im^-(w_1,w_2)((u,v)-(w_1,w_2))\|^p\\
&\quad \quad -(2q-1) \Im^-(w_1,w_2)\int_{\mb R^n}(f_1(u-w_1)+f_2(v-w_2))~dx <0
\end{align*}
which proves the Lemma.\hfill{\QED}
\end{proof}

\begin{Proposition}\label{PS-seq-N-}
Let  $0\neq f_1, f_2 \in L^{\frac{q}{q-1}}(\mb R^n)$ such that \eqref{star0}. Then there exists a sequence $(\hat u_m,\hat v_m) \subset \mc N^-$ such that $J(\hat u_m,\hat v_m) \to \Upsilon^- $ and $\|J^\prime(\hat u_m,\hat v_m)\|_{Y^*} \to 0$ as $m \to \infty$. .
\end{Proposition}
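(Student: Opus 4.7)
The plan is to replicate the argument of Proposition \ref{PS-seq}, now restricted to the constraint set $\mc N^-$. The set-up is favourable: Lemma \ref{N-closed} guarantees that $\mc N^-$ is closed in $Y$ and bounded away from $(0,0)$, while Lemma \ref{J-bdd-below} yields that $J$ is bounded below and coercive on $\mc N^-$. Consequently I would apply Ekeland's variational principle directly on $\mc N^-$ to obtain a sequence $\{(\hat u_m, \hat v_m)\} \subset \mc N^-$ with $J(\hat u_m, \hat v_m) \leq \Upsilon^- + 1/m$ and
\[
J(u,v) \geq J(\hat u_m, \hat v_m) - \frac{1}{m}\|(u,v) - (\hat u_m, \hat v_m)\|, \quad \forall\, (u,v)\in \mc N^-.
\]
Coercivity forces the sequence to be bounded above in $Y$, and Lemma \ref{N-closed} provides the quantitative lower bound $\|(\hat u_m, \hat v_m)\| \geq \theta > 0$; in particular both components are nontrivial for every $m$.

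To convert this constrained Ekeland inequality into the unconstrained statement $\|J^\prime(\hat u_m,\hat v_m)\|_{Y^*}\to 0$, I would, for each $m$, invoke Lemma \ref{IFTN-} at $(\hat u_m,\hat v_m)$ to produce $\e_m>0$ and a $C^1$ map $\Im^-_m : B((0,0),\e_m) \to \mb R^+$ with $\Im^-_m(0,0)=1$ and with the essential property that $\Im^-_m(w_1,w_2)\bigl((\hat u_m,\hat v_m)-(w_1,w_2)\bigr)$ remains in $\mc N^-$. Fixing an arbitrary unit direction $(h_1,h_2)\in Y$, I would test the Ekeland inequality against the admissible perturbation $\Im^-_m(\rho (h_1,h_2))\bigl((\hat u_m,\hat v_m)-\rho(h_1,h_2)\bigr)$, perform the Taylor expansion in $\rho$, divide by $\rho$ and send $\rho\to 0$. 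Precisely as in the chain of estimates carried out in the proof of Proposition \ref{PS-seq}, this would yield
\[
\bigl(J^\prime(\hat u_m,\hat v_m),(h_1,h_2)\bigr) \leq \frac{1}{m}\Bigl(\|(\hat u_m,\hat v_m)\|\cdot\|(\Im^-_m)^\prime(0,0)\|_{Y^*}+1\Bigr),
\]
and the proposition would follow by taking the supremum over unit $(h_1,h_2)$, provided $\|(\Im^-_m)^\prime(0,0)\|_{Y^*}$ is bounded uniformly in $m$.

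By the explicit formula \eqref{IFT1} inside Lemma \ref{IFTN-}, the required uniform bound is equivalent to proving that
\[
\bigl|(I^\prime(\hat u_m,\hat v_m),(\hat u_m,\hat v_m))\bigr| = \bigl|(p-1)\|(\hat u_m,\hat v_m)\|^p-(2q-1)L(\hat u_m,\hat v_m)\bigr|
\]
stays bounded away from zero along the sequence, and this is the main technical obstacle. I expect to handle it by contradiction exactly as at the end of the proof of Proposition \ref{PS-seq}: assuming this quantity is $o(1)$ along a subsequence and combining with the Nehari identity $(J^\prime(\hat u_m,\hat v_m),(\hat u_m,\hat v_m))=0$, one would derive the two asymptotic relations
\[
(p-1)\|(\hat u_m,\hat v_m)\|^p-(2q-1)L(\hat u_m,\hat v_m)=o(1), \qquad (2q-p)\|(\hat u_m,\hat v_m)\|^p-(2q-1)K(\hat u_m,\hat v_m)=o(1),
\]
which force $L(\hat u_m,\hat v_m)\geq d_3>0$ for $m$ large. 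Plugging these into the functional $G$ and applying Corollary \ref{inf-achvd-cor} would then produce $0<\delta\, d_3^{\frac{p}{2q-p}}\leq o(1)$, where $\delta>0$ is the positive constant delivered by Lemma \ref{inf-achvd} under the standing assumption \eqref{star0}. This contradiction would complete the proof and deliver the desired Palais--Smale sequence for $\Upsilon^-$ on $\mc N^-$.
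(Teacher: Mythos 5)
Your proposal is correct and follows essentially the same route as the paper: Ekeland's variational principle on the closed set $\mc N^-$ (Lemma \ref{N-closed}), boundedness from coercivity, and then Lemma \ref{IFTN-} combined with the Taylor-expansion and denominator-bound argument from Proposition \ref{PS-seq} (via Corollary \ref{inf-achvd-cor}) to pass from the constrained to the unconstrained derivative estimate. The paper's proof is just a terser version of this, deferring the details to the proof of Proposition \ref{PS-seq} exactly as you spell them out.
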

\begin{proof}
We note that $\mc N^-$ is closed, by Lemma \ref{N-closed}. Thus by Ekeland's variational principle we obtain a sequence $\{(\hat u_m,\hat v_m)\}$ in $\mc N^-$ such that
\begin{equation*}\label{PS-seq-N-1}
\left\{
\begin{split}
J(\hat u_m,\hat v_m) &\leq \Upsilon^- +\frac{1}{k},\; \text{and}\;\\
J(u,v) & \geq J(\hat u_m,\hat v_m)- \frac{1}{k}\|(\hat u_m-u,\hat v_m-v)\|, \; \text{for all}\; (u,v)\in \mc N^-.
\end{split}
\right.
\end{equation*}
By coercivity of $J$, $\{\hat u_m,\hat v_m\}$ forms a bounded sequence in $Y$. Then using Lemma \ref{IFTN-} and following the proof of Proposition \ref{PS-seq} we conclude the result.\hfill{\QED}
\end{proof}

Our next result shows that $J$ satisfies the $(PS)_c$ condition i.e. the Palais Smale condition for any $c\in \mb R$.

\begin{Lemma}\label{compact-PS-seq}
Let  $0\neq f_1, f_2 \in L^{\frac{q}{q-1}}(\mb R^n)$ such that \eqref{star0}. Then $J$ satisfies the $(PS)_c$ condition. That is, if $\{(u_k,v_k)\}$ is a sequence in $Y$ satisfying
\begin{equation}\label{PS-comp1}
J(u_k,v_k) \to c \; \text{and}\; J^\prime(u_k,v_k) \to 0, \; \text{as}\; k\to \infty,
\end{equation}
for some $c \in  \mb R$ then $\{(u_k,v_k)\}$ has a convergent subsequence.
\end{Lemma}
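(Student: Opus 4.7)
The plan is to follow the standard route for Palais--Smale compactness in nonlocal problems: first establish boundedness, then pass to a weak limit using reflexivity, upgrade to strong convergence of the subcritical pieces via the compact embedding, identify the weak limit as a weak solution, and finally deduce norm convergence from an elliptic monotonicity inequality. Boundedness of $\{(u_k,v_k)\}$ in $Y$ follows by combining \eqref{PS-comp1} in the usual algebraic way: compute $J(u_k,v_k)-\tfrac{1}{2q}(J'(u_k,v_k),(u_k,v_k))$, which equals $\bigl(\tfrac{1}{p}-\tfrac{1}{2q}\bigr)\|(u_k,v_k)\|^p - \bigl(1-\tfrac{1}{2q}\bigr)\int(f_1u_k+f_2v_k)\,dx$, and use $f_i\in L^{q/(q-1)}$ together with the embedding $Y\hookrightarrow L^q(\mathbb R^n)$ to absorb the linear term against the $p$-th power (exactly as in Lemma \ref{J-bdd-below}).

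Up to a subsequence I may then assume $(u_k,v_k)\rightharpoonup(u,v)$ weakly in $Y$. By Lemma \ref{conc-comp} (and since $\tfrac{2nq}{2n-\mu}\in(p,p_s^*)$), one has strong convergence $u_k\to u$, $v_k\to v$ in $L^r(\mathbb R^n)$ for every $r\in[p,p_s^*)$, in particular in $L^{2nq/(2n-\mu)}(\mathbb R^n)$. Repeating the three-step telescoping used in the proof of Lemma \ref{inf-achvd} (Theorem \ref{Sobolemma} applied to $|u_k|^q-|u|^q\to0$ in $L^{2n/(2n-\mu)}$, followed by H\"older), together with the analogue for the $|u_k|^{q-2}u_k$-type factors (the exponent $\tfrac{2nq}{(2n-\mu)(q-1)}$ pairs correctly with the convolution in $L^{2n/\mu}$), shows that all six Choquard terms with a test function $\phi\in Y$ pass to their limits. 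Hence $J'(u,v)=0$ in $Y^*$, and moreover
\[
\bigl\langle J'(u_k,v_k)-J'(u,v),(u_k,v_k)-(u,v)\bigr\rangle = o(1),
\]
because the Choquard pieces and the linear $f_i$-pieces evaluated on $(u_k-u,v_k-v)\rightharpoonup 0$ vanish in the limit.

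The final and most delicate step is to deduce norm convergence in $Y$. Expanding the above duality bracket, the surviving terms are exactly the $p$-fractional Laplacian brackets and the $a_i$-weighted $L^p$ duality brackets for both components. For $p\ge 2$ I invoke the Simon-type inequality
\[
\bigl(|a|^{p-2}a-|b|^{p-2}b\bigr)(a-b)\ge c_p\,|a-b|^p \quad\text{for all }a,b\in\mathbb R,
\]
applied pointwise to $a=u_k(x)-u_k(y)$, $b=u(x)-u(y)$ under the Gagliardo kernel, and to $a=u_k(x)$, $b=u(x)$ under the weight $a_1(x)$, and similarly for the $v$-component. This yields
\[
c_p\,\|(u_k-u,v_k-v)\|^p \le \bigl\langle J'(u_k,v_k)-J'(u,v),(u_k,v_k)-(u,v)\bigr\rangle + o(1) = o(1),
\]
whence $(u_k,v_k)\to(u,v)$ strongly in $Y$.

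The main obstacle I anticipate is the rigorous justification of the convergence of the nonlinear Choquard duality terms against $(u_k-u,v_k-v)$, since one needs to treat the convolution factor $|x|^{-\mu}*|u_k|^q$, the homogeneous factor $|u_k|^{q-2}u_k$ and the test direction simultaneously in three matched $L^r$ spaces; once this is done, the monotonicity step via the Simon inequality is essentially mechanical in the regime $p\ge 2$ assumed in the paper.
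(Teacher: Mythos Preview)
Your proposal is correct and follows essentially the same route as the paper: boundedness via the $J-\tfrac{1}{2q}\langle J',\cdot\rangle$ combination, weak limit by reflexivity, strong $L^r$ convergence from Lemma~\ref{conc-comp}, identification $J'(u,v)=0$, and then pairing $J'(u_k,v_k)$ and $J'(u,v)$ against $(u_k-u,v_k-v)$ to isolate the monotone principal part. The only difference is cosmetic: the paper writes the two pairings separately (equations \eqref{PS-comp7} and \eqref{PS-comp9}) and subtracts, whereas you package them as $\langle J'(u_k,v_k)-J'(u,v),(u_k-u,v_k-v)\rangle$; you are also more explicit than the paper in invoking the Simon inequality for $p\ge 2$ at the last step, which the paper leaves implicit.
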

\begin{proof}
Let $\{(u_k,v_k)\}$ be a sequence in $Y$ satisfying \eqref{PS-comp1}. Using the same arguments as in Lemma \ref{PS-seq}(see \eqref{PS-seq4}), we can show that $\{(u_k,v_k)\}$ is bounded. Since $Y$ is reflexive Banach space, there exists a $(u,v)\in Y$ such that, upto a subsequence, $\{(u_k,v_k)\} \rightharpoonup (u,v)$ weakly in $Y$ as $k \to \infty$. Using compact embedding of $Y$ in $L^r(\mb R^n)$, for $r\in [p,p^*_s)$ i.e. Lemma \ref{conc-comp} we get $\{(u_k,v_k)\} \to (u,v)$ strongly in $L^r(\mb R^n)$  for $r\in (p,p^*_s)$  as $k \to \infty$. From weak continuity of $J^\prime $ and \eqref{PS-comp1} we get $J^\prime(u,v)=0$ .
%i.e. $(u,v)$ is weak solution of $(P)$.

 We claim that $\{(u_k,v_k)\} \to (u,v)$ strongly in $Y$. Since $\lim\limits_{k\to \infty}J^\prime(u_k,v_k)=0$, we consider
 \begin{equation}\label{PS-comp2}
 \begin{split}
 o_k(1)&= \langle u_k, (u_k-u)\rangle+ \langle v_k, (v_k-v)\rangle + \int_{\mb R^n}(a_1u_k(u_k-u)+ a_2v_k(v_k-v))\\
 & \quad -\left( \alpha \int_{\mb R^n}\int_{\mb R^n}\frac{|u_k(x)|^{q-2}u_k(x)(u_k-u)(x)|u_k(y)|^q}{|x-y|^{\mu}}~dxdy\right.\\
 &\quad + \gamma \int_{\mb R^n}\int_{\mb R^n}\frac{|v_k(x)|^{q-2}v_k(x)(v_k-v)(x)|v_k(y)|^q}{|x-y|^{\mu}}~dxdy\\
 & \quad + \beta \int_{\mb R^n}\int_{\mb R^n}\frac{|v_k(x)|^{q-2}v_k(x)(v_k-v)(x)|u_k(y)|^q}{|x-y|^{\mu}}~dxdy\\
 & \quad\left. + \beta \int_{\mb R^n}\int_{\mb R^n}\frac{|u_k(x)|^{q-2}u_k(x)(u_k-u)(x)|v_k(y)|^q}{|x-y|^{\mu}}~dxdy\right)\\
 & \quad - \int_{\mb R^n}(f_1(u_k-u)+f_2(v_k-v))~dx.
 \end{split}
 \end{equation}
 Since $q \in (q_l,q_u)$, $p< \frac{2nq}{2n-\mu}<p^*_s$. So using Proposition \ref{HLS}, we get
 \begin{equation}\label{PS-comp3}
 \begin{split}
 &\int_{\mb R^n}\int_{\mb R^n}\frac{|u_k(x)|^{q-2}u_k(x)(u_k-u)(x)|u_k(y)|^q}{|x-y|^{\mu}}~dxdy\\
 & \leq \left( \int_{\mb R^n} (|u_k|^{q-1}|u_k-u|)^{\frac{2n}{2n-\mu}} \right)^{\frac{2n-\mu}{2n}}\left( \int_{\mb R^n}|u_k|^{\frac{2nq}{2n-\mu}} \right)^{\frac{2n-\mu}{2n}}\\
 & \leq \left[ \left(\int_{\mb R^n}|u_k|^{\frac{2nq}{2n-\mu}}\right)^{\frac{q-1}{q}} \left(\int_{\mb R^n}|(u_k-u)|^{\frac{2nq}{2n-\mu}}\right)^{\frac{1}{q}}\right]^{\frac{2n}{2n-\mu}}\left( \int_{\mb R^n}|u_k|^{\frac{2nq}{2n-\mu}} \right)^{\frac{2n-\mu}{2n}}\\
 & = \left(\int_{\mb R^n}|(u_k-u)|^{\frac{2nq}{2n-\mu}}\right)^{\frac{(2n-\mu)}{2nq}}\left( \int_{\mb R^n}|u_k|^{\frac{2nq}{2n-\mu}} \right)^{\frac{(2n-\mu)(2q-1)}{2nq}} \to 0 \;\text{as}\; k \to \infty.
 \end{split}
 \end{equation}
 Similarly, we can get
\begin{equation}\label{PS-comp4}
\int_{\mb R^n}\int_{\mb R^n}\frac{v_k(x)|^{p-2}v_k(x)(v_k-v)(x)|v_k(y)|^p}{|x-y|^{\mu}}~dxdy \to 0  \;\text{as}\; k \to \infty
\end{equation}
and
\begin{equation}\label{PS-comp5}
\begin{split}
&\int_{\mb R^n}\int_{\mb R^n}\frac{|v_k(x)|^{q-2}v_k(x)(v_k-v)(x)|u_k(y)|^q}{|x-y|^{\mu}}~dxdy\\
& \quad +  \int_{\mb R^n}\int_{\mb R^n}\frac{|u_k(x)|^{q-2}u_k(x)(u_k-u)(x)|v_k(y)|^q}{|x-y|^{\mu}}~dxdy \to 0  \;\text{as}\; k \to \infty.
\end{split}
\end{equation}
Using the hypothesis on $f_1,f_2$ and H\"{o}lder's inequality, we can get
\begin{equation}\label{PS-comp6}
\int_{\mb R^n}(f_1(u_k-u)+f_2(v_k-v))~dx \to 0  \;\text{as}\; k \to \infty.
\end{equation}
Combining \eqref{PS-comp2}, \eqref{PS-comp3}, \eqref{PS-comp4}, \eqref{PS-comp5} and \eqref{PS-comp6}, we get
\begin{equation}\label{PS-comp7}
 o_k(1)= \langle u_k, (u_k-u)\rangle+ \langle v_k, (v_k-v)\rangle + \int_{\mb R^n}(a_1u_k(u_k-u)+ a_2v_k(v_k-v)).
\end{equation}
On a similar note, since $J^\prime(u,v)=0$, we get
 \begin{equation*}\label{PS-comp8}
 \begin{split}
 o_k(1)&= \langle u, (u_k-u)\rangle+ \langle v, (v_k-v)\rangle + \int_{\mb R^n}(a_1u(u_k-u)+ a_2v(v_k-v))\\
 & \quad -\left( \alpha \int_{\mb R^n}\int_{\mb R^n}\frac{|u(x)|^{q-2}u(x)(u_k-u)(x)|u(y)|^q}{|x-y|^{\mu}}~dxdy\right.\\
 &\quad + \gamma \int_{\mb R^n}\int_{\mb R^n}\frac{|v(x)|^{q-2}v(x)(v_k-v)(x)|v(y)|^q}{|x-y|^{\mu}}~dxdy\\
 & \quad + \beta \int_{\mb R^n}\int_{\mb R^n}\frac{|v(x)|^{q-2}v(x)(v_k-v)(x)|u(y)|^q}{|x-y|^{\mu}}~dxdy\\
 & \quad\left. + \beta \int_{\mb R^n}\int_{\mb R^n}\frac{|u(x)|^{q-2}u(x)(u_k-u)(x)|v(y)|^q}{|x-y|^{\mu}}~dxdy\right)\\
 & \quad - \int_{\mb R^n}(f_1(u_k-u)+f_2(v_k-v))~dx.
 \end{split}
 \end{equation*}
 Also, proceeding similarly as in \eqref{PS-comp3}-\eqref{PS-comp6}, we get
 \begin{equation}\label{PS-comp9}
 o_k(1)= \langle u, (u_k-u)\rangle+ \langle v, (v_k-v)\rangle + \int_{\mb R^n}(a_1u_k(u_k-u)+ a_2v_k(v_k-v)).
\end{equation}
Finally, \eqref{PS-comp7} and \eqref{PS-comp9} implies that
\[\lim_{k \to \infty} \|(u_k,v_k)-(u,v)\|^2 = 0\]
which proves our claim and consequently ends the proof.\hfill{\QED}
\end{proof}

\section{Existence of minimizers in $\mathcal{N^+}$ and $\mathcal{N^-}$}
In this section, we show that the minimums are achieved for $\Upsilon$ and $\Upsilon^-$.
\begin{Theorem}\label{min-achvd-N}
Let  $0\neq f_1, f_2 \in L^{\frac{q}{q-1}}(\mb R^n)$ such that \eqref{star0}. Then $\Upsilon$ is achieved at a point $(u_0,v_0)\in \mc N$ which is weak solution for $(P)$.
\end{Theorem}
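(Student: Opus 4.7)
The plan is to combine the Palais--Smale sequence produced in Proposition \ref{PS-seq} with the compactness result of Lemma \ref{compact-PS-seq}, and then verify that the resulting limit is a nontrivial critical point lying in $\mc N$.

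First, I invoke Proposition \ref{PS-seq} (which requires precisely the hypothesis $0\neq f_1,f_2\in L^{q/(q-1)}(\mb R^n)$ with \eqref{star0}) to obtain a sequence $\{(u_k,v_k)\}\subset\mc N$ such that
\[
J(u_k,v_k)\to\Upsilon,\qquad \|J'(u_k,v_k)\|_{Y^*}\to 0,\qquad \text{as } k\to\infty.
\]
This sequence is a $(PS)_\Upsilon$ sequence for $J$ on the whole space $Y$, so Lemma \ref{compact-PS-seq} applies (the hypothesis on $f_1,f_2$ is the same), yielding a subsequence, still denoted $\{(u_k,v_k)\}$, and a pair $(u_0,v_0)\in Y$ with $(u_k,v_k)\to(u_0,v_0)$ strongly in $Y$.

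By continuity of $J$ and $J'$ we immediately get $J(u_0,v_0)=\Upsilon$ and $J'(u_0,v_0)=0$, so $(u_0,v_0)$ is a weak solution of $(P)$. To see that $(u_0,v_0)\in\mc N$, note that $I(u,v)=(J'(u,v),(u,v))$ is continuous on $Y$ (the nonlocal term is continuous by the argument already used in \eqref{conv4}--\eqref{conv6}, and the linear term is trivially continuous), so from $I(u_k,v_k)=0$ and strong convergence we conclude $I(u_0,v_0)=0$, i.e.\ $(u_0,v_0)\in\mc N$.

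Finally I must rule out the trivial pair. Since $J(0,0)=0$ but Lemma \ref{uprbd} gives $\Upsilon\le -\tfrac{(2q-p)(2qp-2q-p)}{4pq^2}C_1<0$ (the coefficient is positive because $2qp-2q-p>0$ follows from $2q>p\cdot\frac{2n-\mu}{n}>p-1+p/p=\cdots$, i.e.\ from the lower bound on $q$), we have $J(u_0,v_0)=\Upsilon<0=J(0,0)$, hence $(u_0,v_0)\neq(0,0)$. This completes the proof.

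The main obstacle, conceptually, was already overcome in the preparatory lemmas: establishing the existence of a Palais--Smale sequence inside $\mc N$ (Proposition \ref{PS-seq}, which required the implicit function theorem argument of Lemma \ref{IFT} and the away-from-zero bound on $(I'(u_k,v_k),(u_k,v_k))$ via Corollary \ref{inf-achvd-cor}) and the full $(PS)_c$ compactness (Lemma \ref{compact-PS-seq}, which used the compact embedding $Y\hookrightarrow L^r(\mb R^n)$ for $r\in[p,p^*_s)$ afforded by condition $(A)$). With these in hand, the present theorem reduces to stringing the pieces together and checking that the limit does not collapse to zero.
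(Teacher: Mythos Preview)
Your proof is correct and follows essentially the same strategy as the paper: produce a Palais--Smale sequence in $\mc N$ via Proposition \ref{PS-seq}, pass to a limit, and identify the limit as a nontrivial critical point realizing $\Upsilon$. There are two minor differences in execution. First, you invoke Lemma \ref{compact-PS-seq} to upgrade to \emph{strong} convergence in $Y$ and then conclude everything by continuity; the paper instead works with the weak limit $(u_0,v_0)$, uses weak continuity of $J'$ to get $J'(u_0,v_0)=0$ (hence $(u_0,v_0)\in\mc N$), and then the weak lower semicontinuity of $J$ to obtain $\Upsilon\le J(u_0,v_0)\le\liminf J(u_k,v_k)=\Upsilon$. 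Second, for nontriviality the paper passes the uniform lower bound \eqref{PS-seq2} to the limit to obtain $\int_{\mb R^n}(f_1u_0+f_2v_0)>0$, whereas you argue via $\Upsilon<0=J(0,0)$ using Lemma \ref{uprbd}. Both routes are valid; yours is arguably the more direct one given that Lemma \ref{compact-PS-seq} is already available.
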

\begin{proof}
From Proposition \ref{PS-seq}, we know that there exists a sequence $(u_k,v_k) \subset \mc N$ such that $J(u_k,v_k) \to \Upsilon $ and $\|J^\prime(u_k,v_k)\|_{Y^*} \to 0$ as $k \to \infty$.  Let $(u_0,v_0)$ be the weak limit of the sequence $\{(u_k,v_k)\}$ in $Y$. Since $(u_k,v_k)$ satisfies \eqref{PS-seq2} we get
\begin{equation}\label{maN}
\int_{\mb R^n}(f_1u_0+f_2v_0)~dx>0.
\end{equation}
Also $\|J^\prime(u_k,v_k)\|_{Y^*} \to 0$ as $k \to \infty$ implies that
\[(J^\prime(u_0,v_0),(\phi_1,\phi_2))=0,\; \text{for all}\; (\phi_1,\phi_2)\in Y\]
i.e. $(u_0,v_0)$ is a weak solution of $(P)$. In particular $(u_0,v_0)\in \mc N$. Moreover
\[\Upsilon \leq J(u_0,v_0) \leq \liminf_{k \to \infty} J(u_k,v_k)= \Upsilon\]
which implies that $(u_0,v_0)$ is the minimizer for $J$ over $\mc N$. \hfill{\QED}
\end{proof}

\begin{Corollary}\label{min-achvd-N+}
Let $(u_0,v_0)\in\mc N$ be such that $\Upsilon= J(u_0,v_0)$ then $(u_0,v_0)\in \mc N^+$
%\[\Upsilon^+= J(u_0,v_0).\]
and $(u_0,v_0)$ is a local minimum for $J$ in $Y$.
\end{Corollary}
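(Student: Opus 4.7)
The plan splits into two tasks: (i) locating $(u_0,v_0)$ inside $\mc N^+$, and (ii) upgrading its $\mc N^+$-minimality to a local minimality on the ambient Banach space $Y$. The ingredients are already on the table: the sign information from Lemma \ref{uprbd} and Lemma \ref{inf-pos-neg}, the non-degeneracy from Lemma \ref{N0-empty}, the implicit-function setup from Lemma \ref{IFT}, and the fibering-map description from Section 3.

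For (i), I would first use Lemma \ref{uprbd} to note $\Upsilon < 0$, while $J(0,0)=0$, so $(u_0,v_0)\neq(0,0)$; Lemma \ref{N0-empty} then rules out $(u_0,v_0)\in\mc N^0$. Lemma \ref{inf-pos-neg} gives $\Upsilon^- > 0 > \Upsilon = J(u_0,v_0)$, so $(u_0,v_0)\notin\mc N^-$ either. The only remaining possibility is $(u_0,v_0)\in\mc N^+$.

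For (ii), apply Lemma \ref{IFT} at $(u_0,v_0)$ to obtain $\e>0$ and a differentiable map $\Im:B((0,0),\e)\subset Y\to(0,\infty)$ with $\Im(0,0)=1$ and $\Im(w_1,w_2)\bigl((u_0,v_0)-(w_1,w_2)\bigr)\in\mc N$ for every $(w_1,w_2)\in B((0,0),\e)$. Because $(I^\prime(u,v),(u,v))$ varies continuously in $(u,v)$ and is strictly positive at $(u_0,v_0)$, after possibly shrinking $\e$ we may ensure that the rescaled point actually lies in $\mc N^+$, so
\[
J\bigl(\Im(w_1,w_2)((u_0,v_0)-(w_1,w_2))\bigr)\geq \Upsilon = J(u_0,v_0).
\]
Setting $(u,v):=(u_0,v_0)-(w_1,w_2)$ and $t_*:=\Im(w_1,w_2)$, it remains to compare $J(u,v)=\varphi_{(u,v)}(1)$ with $\varphi_{(u,v)}(t_*)$. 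The fibering-map shape established in Section 3 shows that $\varphi_{(u,v)}$ decreases on $(0,t_1(u,v))$, increases on $(t_1(u,v),t_2(u,v))$, and decreases on $(t_2(u,v),\infty)$, with $t_*=t_1(u,v)$ its local minimum. Since $\Im(0,0)=1$, for $\|(w_1,w_2)\|$ small the point $t=1$ stays inside the interval on which $\varphi_{(u,v)}\geq\varphi_{(u,v)}(t_*)$, giving
\[
J((u_0,v_0)-(w_1,w_2)) = \varphi_{(u,v)}(1) \geq \varphi_{(u,v)}(t_*) \geq J(u_0,v_0),
\]
which is exactly the local minimality of $J$ at $(u_0,v_0)$ in $Y$.

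The subtle point is the last inequality: one must verify that a small $Y$-perturbation of $(u_0,v_0)$ cannot push $t=1$ past the local maximum $t_2(u,v)$ of the perturbed fibering map, beyond which $\varphi_{(u,v)}$ dips below its local-minimum value. Since $1 = t_1(u_0,v_0) < t_0(u_0,v_0) < t_2(u_0,v_0)$ leaves a uniform gap, and the critical points of $\varphi_{(u,v)}$ depend continuously on $(u,v)\in Y$, this gap persists for all sufficiently small perturbations; every other ingredient is a direct citation of the lemmas already proved.
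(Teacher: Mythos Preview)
Your proposal is correct, and part (ii) follows the paper's own strategy almost verbatim: invoke Lemma \ref{IFT}, shrink $\e$ so that the projected point lands in $\mc N^+$, and use the shape of the fibering map to compare $\varphi_{(u,v)}(1)$ with its value at the local-minimum parameter $t_*$. The paper makes the comparison concrete via the inequality $1 < t_0\bigl((u_0,v_0)-(w_1,w_2)\bigr)$, which is precisely the continuity statement you spell out in your closing paragraph. One small point you gloss over: the three-piece shape of $\varphi_{(u,v)}$ that you invoke requires $K\bigl((u_0,v_0)-(w_1,w_2)\bigr)>0$; this follows from $K(u_0,v_0)>0$ (a consequence of $(u_0,v_0)\in\mc N^+$, or of \eqref{maN} in the paper) together with continuity, and is worth stating.

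Part (i), however, is handled differently. The paper uses $K(u_0,v_0)>0$ (inherited from \eqref{maN} in the proof of Theorem \ref{min-achvd-N}) together with the fibering-map dichotomy of Lemma \ref{N0-empty}: there are exactly two roots $t_1<t_2$ of $\varphi'$, and assuming $t_1<1$ forces $t_2=1$, so $(t_1u_0,t_1v_0)\in\mc N^+$ with $J(t_1u_0,t_1v_0)<\Upsilon$, contradicting minimality over $\mc N$. Your route is more direct: Lemma \ref{inf-pos-neg} gives $\Upsilon^->0>\Upsilon$, immediately excluding $\mc N^-$, and Lemma \ref{N0-empty} excludes $\mc N^0\setminus\{(0,0)\}$. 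This is shorter and sidesteps the somewhat delicate $t_1$/$t_2$ bookkeeping; the trade-off is that it leans on the quantitative bound $\Upsilon^->0$, which the paper does not actually need at this point in the argument.
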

\begin{proof}
 Since \eqref{maN} holds, using Lemma \ref{N0-empty} we get that there exists $t_1, t_2>0$ such that $(u_1,v_1):=(t_1u_0, t_1v_0) \in \mc N^+$ and $(t_2u_0, t_2v_0) \in \mc N^-$. We claim that $t_1=1$ i.e. $(u_0,v_0)\in \mc N^+$. If $t_1<1$ then $t_2=1$ which implies $(u_0,v_0)\in \mc N^-$. Now $J(t_1u_0, t_1v_0)\leq J(u_0,v_0)= \Upsilon $ which is a contradiction to $(t_1u_0, t_1v_0) \in \mc N^+$. To show that $(u_0,v_0)$ is also a local minimum for $J$ in $Y$, we first notice that for each $(u,v)\in Y$ with $K(u,v)>0$ we have
 \[J(\hat tu,\hat tv)\geq J(\hat t_2u,\hat t_2v) \; \text{whenever}\; 0<\hat t< \left(\frac{(p-1)\|(u,v)\|^p}{(2q-1)L(u,v)}\right)^{\frac{1}{2q-p}},\]
where $\hat t_2$ is corresponding to $(u,v)$. In particular, if $(u,v)= (u_0,v_0)\in \mc N^+$ then
\[t_2 =1 < t_0 = \left(\frac{(p-1)\|(u_0,v_0)\|^p}{(2q-1)L(u_0,v_0)}\right)^{\frac{1}{2q-p}}.\]
Using Lemma \ref{IFT}, we obtain a differentiable map $\Im : B((0,0),\e) \to \mb R^+$ for $\e>0$ such that $\Im(w_1,w_2)((u_0,v_0)- (w_1,w_2)) \in \mc N$ whenever $\|(w_1,w_2)\|< \e$. Choosing $\e>0$ sufficiently small so that
\begin{equation}\label{min-achvd-N+1}
1 < \left(\frac{(p-1)\|((u_0,v_0)- (w_1,w_2))\|^p}{(2q-1)L((u_0,v_0)- (w_1,w_2))}\right)^{\frac{1}{2q-p}}
\end{equation}
for every $(w_1,w_2)\in B((0,0),\e)$. This implies $\Im(w_1,w_2)((u_0,v_0)- (w_1,w_2)) \in \mc N^+$ and whenever $0<\hat t< \displaystyle\left(\frac{(p-1)\|((u_0,v_0)- (w_1,w_2))\|^p}{(2q-1)L((u_0,v_0)- (w_1,w_2))}\right)^{\frac{1}{2q-p}}$ we have
\[J(\hat t((u_0,v_0)- (w_1,w_2))) \geq J(\Im(w_1,w_2)((u_0,v_0)- (w_1,w_2))) \geq J((u_0,v_0)).\]
Since \eqref{min-achvd-N+1} holds, we can take $\hat t=1$ and this gives
\[J(\hat t((u_0,v_0)- (w_1,w_2))) \geq  J(u_0,v_0)\; \text{whenever}\; \|(w_1,w_2)\|<\e\]
which prove the last assertion.\hfill{\QED}
\end{proof}

\noi\textbf{Proof of Theorem \ref{mainthrm}:} The proof follows from Theorem \ref{min-achvd-N} and Corollary \ref{min-achvd-N+} except that we need to show that there exist a non negative solution if $f_1,f_2 \geq 0$. Suppose $f_1,f_2 \geq 0$ then consider the function $(|u_0|, |v_0|)$. Then we know that there exist a $t_1>0$ such that $(t_1|u_0|, t_1|v_0|) \in \mc N^+$ and $t_1|u_0|, t_1|v_0|>0$. It is easy to see that $\|(u_0,v_0)\|= \|(|u_0|,|v_0|)\|$, $L(u_0,v_0)= L(|u_0|,|v_0|)$ and $K(u_0,v_0) \leq K(|u_0|,|v_0|)$. If $\varphi_{u,v}(t)$ denotes the fibering map corresponding to $(u,v) \in Y$ as introduced in section 3, we get
$\varphi^\prime_{|u_0|,|v_0|}(1) \leq \varphi^\prime_{u_0,v_0}(1)=0$. Since $t_1$ is the point of local minimum of $\varphi_{|u_0|,|v_0|}(t)$ for $0<t<\displaystyle \left(\frac{(p-1)\|(|u_0|,|v_0|)\|^p}{(2q-1)L(|u_0|,|v_0|)}\right)^{\frac{1}{2q-p}}$, $t_1 \geq 1$. Necessarily,
\[ J(t_1|u_0|,t_1|v_0|)\leq J(|u_0|,|v_0|)\leq J(u_0,v_0)\]
which implies that we can always take $u_0, v_0 \geq 0$ while considering the weak solution $(u_0,v_0)$ for $(P)$. \hfill{\QED}

\noi Next we prove that the infimum $\Upsilon^-$ is achieved and the minimizer  is another weak solution to problem $(P)$.
\begin{Theorem}\label{min-achieved2}
Let  $0\neq f_1, f_2 \in L^{\frac{q}{q-1}}(\mb R^n)$ such that \eqref{star0}, then there exists $(u_1,v_1)\in \mc N^-$ such that
\[ \Upsilon^- = J(u_1,v_1). \]
\end{Theorem}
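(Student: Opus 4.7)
The plan is to combine the three key ingredients already established: the Palais–Smale sequence constructed on $\mc N^-$ (Proposition \ref{PS-seq-N-}), the global $(PS)_c$ compactness for $J$ (Lemma \ref{compact-PS-seq}), and the closedness of $\mc N^-$ (Lemma \ref{N-closed}) together with the strict positivity of $\Upsilon^-$ (Lemma \ref{inf-pos-neg}).

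First, invoke Proposition \ref{PS-seq-N-} to obtain a sequence $\{(\hat u_m,\hat v_m)\}\subset \mc N^-$ with
\[
J(\hat u_m,\hat v_m)\to \Upsilon^-,\qquad \|J'(\hat u_m,\hat v_m)\|_{Y^*}\to 0\quad\text{as }m\to\infty.
\]
This is a $(PS)_{\Upsilon^-}$ sequence for $J$ in $Y$. Next, apply Lemma \ref{compact-PS-seq} with $c=\Upsilon^-$ to extract a subsequence, still denoted $\{(\hat u_m,\hat v_m)\}$, which converges strongly in $Y$ to some limit $(u_1,v_1)\in Y$.

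By continuity of $J$ and $J'$ along strongly convergent sequences we obtain $J(u_1,v_1)=\Upsilon^-$ and $J'(u_1,v_1)=0$, so $(u_1,v_1)$ is in $\mc N$. It remains to verify that $(u_1,v_1)\in \mc N^-$ rather than escaping to the boundary of $\mc N^-$. By Lemma \ref{N-closed}, $\mathrm{cl}(\mc N^-)\subset \mc N^-\cup\{(0,0)\}$, so strong convergence forces $(u_1,v_1)\in \mc N^-\cup\{(0,0)\}$. However, $(u_1,v_1)=(0,0)$ would give $J(u_1,v_1)=0$, contradicting $J(u_1,v_1)=\Upsilon^->0$ guaranteed by Lemma \ref{inf-pos-neg} under assumption \eqref{star0}. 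Hence $(u_1,v_1)\in \mc N^-$ and achieves $\Upsilon^-$, completing the proof.

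The main (and essentially only) subtle point is the exclusion of the trivial limit: the closedness of $\mc N^-$ only gives $\mc N^-\cup\{(0,0)\}$, and without the strict lower bound $\Upsilon^->0$ from Lemma \ref{inf-pos-neg}, the minimizing sequence could a priori concentrate at the origin. The assumption \eqref{star0} is precisely what rules this out, which is why it is invoked here rather than the weaker \eqref{star00}. The remaining steps (boundedness of the PS sequence, weak-to-strong upgrade via compact embeddings and the Hardy–Littlewood–Sobolev type convergences) are already packaged inside Lemma \ref{compact-PS-seq}, so no additional work beyond a clean citation is required.
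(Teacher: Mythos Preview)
Your proof is correct and follows essentially the same route as the paper: construct a Palais--Smale sequence in $\mc N^-$ via Proposition~\ref{PS-seq-N-}, upgrade to strong convergence via Lemma~\ref{compact-PS-seq}, and conclude that the limit lies in $\mc N^-$ and realizes $\Upsilon^-$. The only minor difference is that you invoke $\Upsilon^->0$ from Lemma~\ref{inf-pos-neg} to rule out the trivial limit, whereas Lemma~\ref{N-closed} already establishes that $\mc N^-$ is genuinely closed (its proof shows $\mathrm{dist}((0,0),\mc N^-)>0$, so $(0,0)\notin\mathrm{cl}(\mc N^-)$), making that extra exclusion step unnecessary; either way the argument goes through.
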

\begin{proof}
Using Lemma \ref{PS-seq-N-} we know that there exist a sequence $\{(\hat u_m,\hat v_m)\} \subset \mc N^-$ such that
\[J(\hat u_m, \hat v_m)\to \Upsilon^-\; \text{and}\; J^\prime(\hat u_m,\hat v_m) \to 0, \; \text{as}\; m \to \infty.\]
 Applying again Lemma \ref{compact-PS-seq}, we get that there exists $(u_1,v_1) \in Y$ such that, upto a subsequence, $(\hat u_m,\hat v_m) \to (u_1,v_1)$ strongly in $Y$ as $m \to \infty$. Since $L(u,v)$ is weakly sequentially continuous, this implies $\lim\limits_{k \to \infty}J(\hat u_m,\hat v_m)= J(u_1,v_1)=\Upsilon^-$ and $(u_1,v_1) \in \mc N^-$.. Therefore, Lemma \ref{exist-sol} implies that $(u_1,v_1)$ is a weak solution of $(P)$. \hfill{\QED}\\
\end{proof}

\noi  Finally, we prove Theorem \ref{mainthrm2}.

\noi \textbf{Proof of Theorem \ref{mainthrm2}:} The existence of second weak solution $(u_1,v_1)$ for $(P)$ is asserted by Theorem \ref{min-achieved2}. So we only need to show that we can obtain a non negative weak solution if $f_1,f_2\geq 0$. Consider the function $(|u_1|,|v_1|)$ then there exist a $t_2>0$ such that $(t_2|u_1|, t_2|v_1|) \in \mc N^-$. Let
\[t_0= \left(\frac{(p-1)\|(u_1,v_1)\|^p}{(2q-1)L(u_1,v_1)}\right)^{\frac{1}{2q-p}}\]
then since $(u_1,v_1)\in \mc N^-$ we conclude that
\[J(u_1,v_1)= \max_{t\geq t_0} J(tu_1,tv_1)\geq J(t_2u_1,t_2v_1) \geq J(t_1|u_1|,t_1|v_1|).\]
Therefore it remains true to consider $u_1,v_1 \geq 0$ while considering the weak solution $(u_1,v_1)$ in case $f_1,f_2\geq0$.
 \linespread{0.5}


\begin{thebibliography}{21}
\linespread{0.1}

\bibitem{AFY} C.O. Alves, M. G. Figueiredo and M. Yang, {\it Existence of solutions for a nonlinear Choquard equation with potential
vanishing at infinity}, Adv. Nonlinear Anal. 5 (4) (2016), 331–-345.

\bibitem{ANY} C.O. Alves, B. A. N\'{o}brega and M. Yang, {\it Multi-bump solutions for Choquard equation with deepening potential well}, Calc. Var. Partial Differential Equations, 55 (3) (2016), Art. 48, 28 pp.

\bibitem{ay1}  C.O. Alves and M. Yang, {\it Existence of semiclassical ground state solutions for a generalized Choquard
equation}, J. Differential Equations, 257 (11) (2014), 4133-–4164.

\bibitem{ay2} C.O. Alves and M. Yang, {\it Multiplicity and concentration of solutions for a quasilinear Choquard equation}, J. Math. Phys., 55 (6) (2014) 061502, 21 pp.

\bibitem{ay3} C.O. Alves and M. Yang, {\it Investigating the multiplicity and concentration behaviour of solutions for
a quasi-linear Choquard equation via the penalization method}, Proc. Roy. Soc. Edinburgh Sect. A, 146 (1) (2016), 23-–58.

\bibitem{brasco1}  L. Brasco and E. Parini, {\it The second eigenvalue of the fractional p-Laplacian}, Adv. Calc. Var., 9  (4) (2015) 323-–355.
\bibitem{brasco2} L. Brasco, E. Parini  and M. Squassina, {\it Stability of variational eigenvalues for the fractional p-Laplacian}, Discrete Contin. Dyn. Syst. Series A, 36 (4) (2016), 1813-- 1845.

 \bibitem{cafe} L.A. Caffarelli, {\it  Nonlocal equations, drifts and games}, Nonlinear Partial Differential Equations, Abel
Symposia, 7 (2012) 37–-52.


\bibitem{chang} K.C. Chang, {\it Methods in Nonlinear Analysis}, Springer Monographs in Mathematics, Springer-Verlag, Berlin, 2005.

\bibitem{CD} W. Chen and S. Deng, {\it The Nehari manifold for a fractional p-Laplacian system involving concave-convex nonlinearities}, Nonlinear Anal. Real World Appl.,  27 (2016), 80–-92.

\bibitem{CS} W. Chen and M. Squassina, {\it Critical Nonlocal Systems with Concave-Convex Powers}, Adv. Nonlinear Stud.,  16 (4)
    (2016), 821-–842.

\bibitem{choi} W. Choi, {\it On strongly indefinite systems involving the fractional Laplacian}, Nonlinear Anal., 120 (2015), 127--153.

\bibitem{clsa} M. Clapp and D. Salazar, {\it Positive and sign changing solutions to a nonlinear Choquard equation},
J. Math. Anal. Appl., 407 (1) (2013) 1–-15.

\bibitem{ASS} P d'Avenia, G. Siciliano and M. Squassina, {\it On fractional Choquard equation}, Math. Models Methods Appl. Sci.,  25 (8) (2015), 1447–-1476.


\bibitem{hitch} E. Di Nezza, G. Palatucci and E. Valdinoci, {\it Hitchhiker's guide to the fractional Sobolev spaces}, Bull. Sci. math., 136 (2012), 521--573.
\bibitem{faria} L.F.O. Faria, O.H. Miyagaki, F.R. Pereira, M. Squassina and C. Zhang, {\it The Brezis-Nirenberg problem for nonlocal systems}, Adv. Nonlinear Anal., 5 (1) (2016), 85--103.

\bibitem{FTY1} J. Fr\"{o}hlich, T.-P. Tasi and H.-T. Yau, {\it On a classical limit of quantum theory and the non-linear Hartree eqution}, in: GAFA 2000, Tel Aviv, 1999, Geom. Func. Anal. (2000), 57--78 (Special Volume Part I).

\bibitem{FTY2}  J. Fr\"{o}hlich, T.-P. Tasi and H.-T. Yau, {\it On the point-particle (Newtnian) limit of the non-linear Hartree equation}, Comm. Math. Phys., 225 (2002),  223--274.

\bibitem{Ge} B. Ge, {\it Multiple solutions of nonlinear Schrödinger equation with the fractional Laplacian}, Nonlinear Analysis: Real World Applications, 30 ( 20160,  236-–247.


\bibitem{GhSc} M. Ghimenti and J. Van Schaftingen, {\it Nodal solutions for the Choquard equation}, J. Funct. Anal., 271 (2016), 107--135.

\bibitem{GMS} M. Ghimenti, V. Moroz, and  J.V. Schaftingen, {\it Least action nodal solutions for the quadratic Choquard equation}, Proc. Amer. Math. Soc., 145 (2) (2017), 737-–747.

\bibitem{ghsc} M. Ghimentia and J.V. Schaftingen, {\it Nodal solutions for the Choquard equation}, Journal of Functional Analysis, 271  (1) (2016), 107-–135.
\bibitem{GPS} J. Giacomoni, P. K. Mishra and K. Sreenadh, {\it Critical growth fractional elliptic systems with exponential nonlinearity}, Nonlinear Analysis, 136 (2016), 117-–135.

\bibitem{ss2} S. Goyal, {\it Multiplicity results of fractional p-Laplace equations with sign-changing and singular nonlinearity}, Complex Variables and Elliptic Equations, 62 (2) (2017) 158--183.

\bibitem{ss1} S. Goyal and K. Sreenadh, {\it Existence of multiple solutions of p-fractional Laplace
operator with sign-changing weight function}, Advances in Nonlinear Analysis, 4
(2015) 37--58.

\bibitem{guo} Z. Guo, S. Luo and W. Zou, {\it On critical systems involving frcational Laplacian}, J. Math. Anal. Appl., 446 (1) (2017), 681--706.

\bibitem{Sqsn} X. He, M. Squassina and W. Zou, {\it The nehari manifold for fractional systems involving critical nonlinearities}, Communications on  pure and applied analysis, 15 (4) (2016), 1285--1308.

\bibitem{sq1} A. Iannizzotto and M. Squassina, {\it Weyl-type laws for fractional p-eigenvalue problems}, Asymptot. Anal.,
88 (4) (2014), 233–-245.

\bibitem{sq2} A. Iannizzotto, S. Mosconi and M. Squassina, {\it Global H¨older regularity for the fractional p-Laplacian}, available at https://arxiv.org/abs/1411.2956

\bibitem{iano} A. Iannizzotto, S. Liu, K. Perera and M. Squassina, {\it Existence results for fractional p-Laplacian problems via Morse theory}, Adv. Calc. Var., 9 (2)  (2014), 101--125.


\bibitem{lieb} E. Lieb and M. Loss, "Analysis",  {\it Graduate Studies in Mathematics}, AMS, Providence, Rhode island, 2001.

\bibitem{linq} E. Lindgren, P. Lindqvist, {\it Fractional eigenvalues}, Calc. Var. Partial Differential Equations, 49 (1-2)  (2014) 795-–826.
\bibitem{LuXu} D. L\"{u} and G. Xu, {\it On nonlinear fractional Schrödinger equations with Hartree-type nonlinearity}, Applicable Analysis, (2016), http://dx.doi.org/10.1080/00036811.2016.1260708.

\bibitem{bisci} G. Molica Bisci and V. R$\breve{a}$dulescu and R. Servadei, {Variational methods for nonlocal fractional problems}, with a foreword by Jean Mawhin. Encyclopedia of Mathematics and its Applications, 162. Cambridge University Press, Cambridge, 2016. xvi+383pp.

\bibitem{mosc} V. Moroz and J.V. Schaftingen, {\it Groundstates of nonlinear Choquard equations: Existence, qualitative properties and decay asymptotics}, J. Func. Anal., 265 (2) (2013)  153–-184.

\bibitem{moroz1} V. Moroz, and  J.V. Schaftingen, {\it Existence of groundstates for a class of nonlinear Choquard equations},
Trans. Amer. Math. Soc., 367 (9) (2015), 6557–-6579.

\bibitem{moroz2} V. Moroz, and  J.V. Schaftingen, {\it Groundstates of nonlinear Choquard equations: Hardy-Littlewood-Sobolev
critical exponent}, Commun. Contemp. Math., 17 (5) (2015), 1550005, 12 pp.

\bibitem{moroz3} V. Moroz, and  J.V. Schaftingen, {\it Semi-classical states for the Choquard equation}, Calc. Var. Partial Differential Equations 52 (1-2) (2015), 199–-235.

\bibitem{TS-cho} T. Mukherjee and K. Sreenadh, {\it Fractional Choquard Equation with Critical
Nonlinearities}, preprint available at https://arxiv.org/pdf/1605.06805.pdf.



\bibitem{serv1} R. Servadei, {\it The Yamabe equation in a non-local setting}, Adv. Nonlinear Anal., 2(3)  (2013), 235-–270.

\bibitem{serv2} R. Servadei, {\it A critical fractional Laplace equation in the resonant case}, Topol. Methods Nonlinear Anal.,
43 (1) (2014), 251–-267.

\bibitem{serv4} R. Servadei and E. Valdinoci, {\it The Brezis-Nirenberg result for the fractional Laplacian}, Trans. Amer. Math.
Soc., 367 (1) (2015), 67–-102.

\bibitem{SGY} Z. Shen, F. Gao and M. Yang, {\it Ground states for nonlinear fractional Choquard equations with general nonlinearities}, Math Methods Appl Sci, (2016), DOI: 10.1002/mma.3849.

\bibitem{ZR} V.C.Z. Sissa and P. H. Rabinowitz, {\it Homoclinic type solutions for a semilinear elliptic PDE on $\mb R^n$}, Comm. Pure Appl. Math., 45 (10) (1992), 1217--1269.

\bibitem{stein} E. Stein , {\it Singular integrals and differentiability properties of functions}, Princeton, NJ: Princeton University Press, (1970).
\bibitem{tarantello}  G. Tarantello, {\it On nonhomogeneous elliptic equations involving critical Sobolev exponent}, Ann. Inst. H. Poincar\'{e} Anal. Non Lin\'{e}aire, 9 (1992), 281--304.

\bibitem{wang} J. Wang, Y. Dong, Q. He and L. Xiao, {\it Multiple positive solutions for a coupled nonlinear Hartree type equations with perturbations}, J. Math. Anal. Appl. , 450 (1) (2017), 780--794.

\bibitem{ww} K. Wang and J. Wei, {\it On the uniqueness of solutions of a nonlocal elliptic system}, Math. Ann., 365(1-2) (2016), 105--153.

\bibitem{wang}T. Wang, {\it Existence and nonexistence of nontrivial solutions for Choquard type equations}, Electron. J. Differential Equations, 3 (2016) 1–-17.

\bibitem{Wu} D. Wu, {\it Existence and stability of standing waves for nonlinear fractional Schr\"{o}dinger equations with Hartree type nonlinearity}, J. Math. Anal. Appl., 411 (2) (2014), 530-–542.

\bibitem{XXW} T. Xie, L. Xiao and J. Wang, {\it Exixtence of multiple positive solutions for Choquard equation with perturbation}, Adv. Math. Phys., 2015 (2015) 760157.

\end{thebibliography}
\end{document}